\documentclass{amsart}

\usepackage{amscd}
\usepackage{amssymb}

\usepackage{color}

\newtheorem{thm}[equation]{Theorem}
\newtheorem{lem}[equation]{Lemma}

\newtheorem{cor}[equation]{Corollary}
\newtheorem{prop}[equation]{Proposition}

\newtheorem{specthm}{Theorem}

\newtheorem{speccor}[specthm]{Corollary}

% the * form has no number attached to it
\newtheorem*{thm*}{Theorem}
\newtheorem*{prop*}{Proposition}
\newtheorem*{cor*}{Corollary}
\newtheorem*{lem*}{Lemma}
\newtheorem*{MT*}{Main Theorem}

\newtheorem*{ques*}{Question}

\theoremstyle{definition} %

\newtheorem*{defn*}{Definition}

\newtheorem{eg}[equation]{Example}

\theoremstyle{remark} %
\newtheorem{rmk}[equation]{Remark}

\newtheorem*{rmk*}{Remark}
\newtheorem*{rmks*}{Remarks}

\DeclareMathOperator{\Nil}{Nil}
\DeclareMathOperator{\rank}{rank}
\DeclareMathOperator{\type}{type}

\DeclareMathOperator{\car}{char}
\DeclareMathOperator{\Lie}{Lie}

\DeclareMathOperator{\im}{im}

\renewcommand{\P}{\mathbb{P}}
\newcommand{\C}{\mathbb{C}}
\newcommand{\Z}{\mathbb{Z}}

\newcommand{\lie}{\mathfrak{g}}
\newcommand{\lsub}{\mathfrak{h}}

\newcommand{\qform}[1]{{\left\langle{#1}\right\rangle}}                   % a quadratic form
\DeclareMathOperator{\PGL}{PGL}
\DeclareMathOperator{\SL}{SL}
\DeclareMathOperator{\Sp}{Sp}

\DeclareMathOperator{\SO}{SO}

\DeclareMathOperator{\SGO}{SGO}
\DeclareMathOperator{\GO}{GO}
\DeclareMathOperator{\OO}{O}

\DeclareMathOperator{\GL}{GL}
\DeclareMathOperator{\Ad}{Ad}
\DeclareMathOperator{\ad}{ad}
\DeclareMathOperator{\Spin}{Spin}
\DeclareMathOperator{\End}{End}

\newcommand{\F}{\mathbb{F}}
\newcommand{\A}{\mathbb{A}}

\newcommand{\g}{\mathfrak{g}}
\newcommand{\gt}{\tilde{\g}}
\newcommand{\gb}{\bar{\g}}

\newcommand{\s}{\mathfrak{s}}

\newcommand{\hst}{\tilde{\alpha}}

\newcommand{\n}{\mathfrak{n}}
\newcommand{\gl}{\mathfrak{gl}}
\renewcommand{\sl}{\mathfrak{sl}}
\newcommand{\pgl}{\mathfrak{pgl}}
\renewcommand{\sp}{\mathfrak{sp}}
\newcommand{\spin}{\mathfrak{spin}}
\newcommand{\so}{\mathfrak{so}}
\newcommand{\oo}{\mathfrak{o}}

\newcommand{\go}{\mathfrak{go}}

\newcommand{\tor}{\mathfrak{t}}

\DeclareMathOperator{\Sym}{S}

\DeclareMathOperator{\Spec}{Spec}

\newcommand{\T}{\top}

\newcommand{\p}{\mathfrak{p}}
\newcommand{\z}{\mathfrak{z}}

\newcommand{\Gm}{\mathbb{G}_{{\mathrm{m}}}}
\newcommand{\Ga}{\mathbb{G}_{{\mathrm{a}}}}

\newcommand{\drho}{\mathrm{d}\rho}
\newcommand{\dpi}{\mathrm{d}\pi}
\newcommand{\dq}{\mathrm{d}q}

\newcommand{\at}{\tilde{\alpha}}

\newcommand{\la}{\lambda}

\newcommand{\eps}{\varepsilon}

\newcommand{\stbtmat}[4]{\left( \begin{smallmatrix} #1&#2 \\ #3&#4 \end{smallmatrix} \right)}

\newcommand{\Gt}{\widetilde{G}}
\newcommand{\Tt}{\widetilde{T}}
\newcommand{\Gb}{\bar{G}}

\DeclareMathOperator{\Id}{Id}

\usepackage{hyperref}
\hypersetup{
  colorlinks=true, %set true if you want colored links
   linkcolor=red,  %choose some color if you want links to stand out
}
\usepackage[hyphenbreaks]{breakurl}

\numberwithin{equation}{section}

\newcommand{\kx}{k^\times}

\begin{document}

\title[Generically free representations: large representations]{Generically free representations {I}:\\ large representations}

\author[S. Garibaldi]{Skip Garibaldi}
\author[R.M. Guralnick]{Robert M. Guralnick}

\subjclass[2010]{20G05 (primary); 17B10 (secondary)}

\begin{abstract}
For a simple linear algebraic group $G$ acting faithfully on a vector space $V$ and under mild assumptions, we show:
if $V$ is large enough, then the Lie algebra of $G$ acts generically freely on $V$.  That is, the stabilizer in $\Lie(G)$ of a generic vector in $V$ is zero.  The  bound on $\dim V$ grows like $(\rank G)^2$ and holds with only mild hypotheses on the characteristic of the underlying field.  The proof relies on results on generation of Lie algebras by conjugates of an element that may be of independent interest.  We use the bound in subsequent works to determine which irreducible faithful representations are generically free, with no hypothesis on the characteristic of the field.  This in turn has applications to the question of which representations have a stabilizer in general position as well as the determination of the invariants of the representation.  
\end{abstract}

\maketitle
\setcounter{tocdepth}{1}
\tableofcontents

Let $G$ be a simple linear algebraic group over a field $k$ acting faithfully on a vector space $V$.  In the special case $k = \C$, there is a striking dichotomy between the properties of irreducible representations $V$ whose dimension is small (say, $\le \dim G$) versus those whose dimension is large, see \cite{AVE}, \cite{Elashvili:can}, \cite{APopov}, etc.\ for original results and \cite[\S8.7]{PoV} for a survey and bibliography.  For example, if $\dim V < \dim G$, then trivially the stabilizer $G_v$ of a vector $v \in V$ is not 1.  On the other hand (and nontrivially), for $\dim V$ hardly bigger than $\dim G$, the stabilizer $G_v(k)$ for generic $v \in V$ is 1; in this case one says that $V$ is \emph{generically free} or $G$ acts \emph{generically freely} on $V$.
This property has taken on increased importance recently due to applications in Galois cohomology and essential dimension, see \cite{Rei:ICM} and \cite{M:ed} for the theory and \cite{BRV}, \cite{GG:spin}, \cite{Karp:ICM}, \cite{LMMR13}, \cite{Loetscher:fiber}, etc.~for specific applications.

With applications in mind, it is desirable to extend the results on generically free representations to all fields.  The paper
\cite{GurLawther} showed that, for $k$ algebraically closed of any characteristic and $V$ irreducible, $\dim V > \dim G$ if and only if the stabilizer $G_v(k)$ of a generic $v \in V$ is finite.  (This was previously known when $\car k = 0$ \cite{AVE}.)  Moreover, except for the cases in Table \ref{ngenfree}, when $G_v(k)$ is finite it is 1, i.e., the group scheme $G_v$ is infinitesimal.  For applications, it is helpful to know if $G_v$ is not just infinitesimal but is the \emph{trivial} group scheme.  In this paper, we prove the following general bound:

\begin{specthm} \label{MT}
Let $G$ be a simple linear algebraic group over a field $k$ such that $\car k$ is not special for $G$.  If $\rho \!: G \to \GL(V)$ is a representation of $G$ such that $V$ has a $G$-subquotient $X$ with $X^{[\g,\g]} = 0$ and 
$\dim X > b(G)$ for $b(G)$ as in Table \ref{classical.table}, then for generic $v \in V$,  $\Lie(G)_v = \ker \drho$.
\end{specthm}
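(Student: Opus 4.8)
The statement to prove is that, for generic $v$, $\Lie(G)_v = \ker\drho$. Since $\ker\drho \subseteq \Lie(G)_v$ for every $v$, and since $\Lie(G)_v = \{x \in \g : \drho(x) v = 0\}$ is a \emph{linear} subspace of $\g$, it suffices to match dimensions: writing $\gbar := \g/\ker\drho$, the claim is equivalent to the injectivity, for generic $v$, of the linear map $\gbar \to V$, $\bar x \mapsto \drho(x) v$. The plan is therefore to show that the ``bad locus'' $B := \{v \in V : \drho(x) v = 0 \text{ for some } x \notin \ker\drho\}$ is not dense. A first reduction passes from $V$ to the subquotient $X$: for any $x$ the rank of $\drho(x)$ can only decrease on a subquotient, so $\rank\drho(x)|_V \ge \rank\drho(x)|_X$; moreover the hypotheses ($\car k$ not special together with $X^{[\g,\g]} = 0$) force the kernel of the $\g$-action on $X$ to be exactly $\ker\drho$, so that $\gbar$ acts on $X$ with no nonzero element acting as $0$. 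Consequently every rank estimate I need will be carried out on $X$ alone.

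Next I convert non-density of $B$ into a dimension count. Upper semicontinuity of $v \mapsto \dim\Lie(G)_v$ means it suffices to exhibit one $v$ with $\dim\Lie(G)_v = \dim\ker\drho$; equivalently, forming the incidence variety $Z := \{([x], v) \in \P(\gbar) \times V : \drho(x) v = 0\}$, I must show $\dim Z < \dim V$. Projecting $Z$ to $\P(\gbar)$ and stratifying the latter by $G$-conjugacy classes (scaling included), the fibre over $[x]$ is $\ker(\drho(x)|_V)$, of dimension $\dim V - \rank\drho(x)|_V \le \dim V - \rank\drho(x)|_X$ by the rank monotonicity above. Hence $\dim Z < \dim V$ follows from a collection of inequalities, one per stratum $S$,
\[
  \rank\drho(x)|_X \;>\; \dim S \qquad (x \in S),
\]
with $\dim S$ the dimension of the projectivized class, at most $\dim(G\cdot x) = \dim\g - \dim\z_{\g}(x)$ plus the dimension of the family of classes of that type. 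Crucially, each such inequality now lives entirely on $X$.

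The scale of the bound $b(G) \sim (\rank G)^2 \sim \dim G$ is now visible and is forced by the regular semisimple stratum: there $\dim S \approx \dim\g$, while $\ker(\drho(x)|_X)$ is the zero weight space $X_0$, so the required inequality reads $\dim X - \dim X_0 > \dim\g$, which is exactly a lower bound on $\dim X$ of order $\dim G$ once $\dim X_0$ is known to be a bounded fraction of $\dim X$. For the remaining strata --- nilpotent and mixed elements, and in particular those with large centralizer, where weight combinatorics alone do not produce a rank bound --- I will invoke the generation-by-conjugates results established earlier in the paper. The point is that the $G$-submodule of $\g$ spanned by the conjugates of any $0 \neq x$ is all of $\g$, so that $\bigcap_{g} \ker(\drho(gx)|_X) = X^{\g} \subseteq X^{[\g,\g]} = 0$; a sufficiently sharp, quantitative form of this statement --- bounding how few conjugates are needed and how fast their common fixed space shrinks --- yields the lower bound on $\rank\drho(x)|_X$ demanded by the displayed inequality on each such stratum, and it is the sharpness of that count (on the order of $\rank G$, not $\dim G$) that keeps the final threshold at $(\rank G)^2$.

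The step I expect to be the main obstacle is making this rank inequality hold uniformly across all conjugacy classes and all non-special characteristics at once. In good characteristic the Jordan decomposition $x = s + n$ and weight-multiplicity estimates control $\dim\ker(\drho(x)|_X)$ cleanly, but in small (non-special) characteristic the Jordan block structure of $\drho(x)|_X$ and the weight multiplicities can degenerate, so the generation-by-conjugates input must be strong enough to compensate without inflating the threshold beyond $(\rank G)^2$. Balancing the quantitative conjugacy-generation bound against the crude combinatorial estimates for $\dim\ker(\drho(x)|_X)$, uniformly in $x$ and in $\car k$, is the crux; the earlier reductions and the incidence-variety count are, by comparison, routine.
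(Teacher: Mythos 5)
Your skeleton (incidence-variety dimension count, reduction to the subquotient $X$, generation by conjugates as the engine) is broadly the paper's, but as written it contains one incorrect reduction and leaves the actual crux open. The incorrect step: you assert that $\car k$ not special together with $X^{[\g,\g]}=0$ forces the kernel of the $\g$-action on $X$ to be exactly $\ker \drho$, so that ``every rank estimate can be carried out on $X$ alone.'' The hypotheses only force that kernel to be \emph{central}, and a central element can annihilate $X$ while acting nontrivially on $V$: take $G = \SL_3$, $\car k = 3$, $V = X \oplus k^3$ with the highest weight of $X$ in the root lattice; the scalar matrix in $\z(\sl_3)$ acts as $0$ on $X$ but not on the natural summand, so it lies outside $\ker\drho$ yet has $\rank \drho(x)|_X = 0$, and your per-stratum inequality $\rank\drho(x)|_X > \dim S$ fails outright ($0 > 0$) on that stratum. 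The paper sidesteps this by design: the inequalities are used only to prove $\g_v \subseteq \z(\g)$ (Lemma \ref{ineq.spin} with $\lsub = \z(\g)$), and the central part is then handled on $V$ itself by Lemma \ref{central}, using that $Z(G)$ is diagonalizable, so $\z(\g)$ acts diagonalizably and its generic stabilizer is $\ker(\drho|_{\z(\g)})$.

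The larger issue is that you defer the per-class rank bound to a ``sufficiently sharp, quantitative form'' of generation and flag its uniformity over all classes and characteristics as the main obstacle --- but that obstacle is the theorem, and the two devices that dissolve it are absent from your plan. First, there is no need to stratify the continuum of semisimple classes or to estimate zero-weight spaces (your assertion that $\dim X_0$ is a bounded fraction of $\dim X$ is unsubstantiated, and at $p=2$ toral elements can fix well over half of $X$): Lemma \ref{ineq.spin} reduces everything to the \emph{finitely many} classes of noncentral $x$ with $x^{[p]} \in \{0,x\}$, and toral/semisimple elements are then compared to nilpotents by explicit degenerations (Section \ref{deforming}), not weight combinatorics. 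Second, the quantitative link you are missing is elementary: if $x_1,\dots,x_e \in x^G$ generate $\s \supseteq [\g,\g]$, then $X^{\s} = 0$, and iterating $\dim(U \cap U') \ge \dim U + \dim U' - \dim X$ gives $\dim X^x \le (1 - 1/e)\dim X$, i.e.\ $\rank\drho(x)|_X \ge \dim X/e$; so \eqref{ineq.mother} follows from the single inequality $e \cdot \dim x^G < \dim X$, which is exactly what the class-by-class bounds $e \cdot \dim x^G \le b(G)$ of Lemma \ref{MT.norep} (Corollaries \ref{A.odd}, \ref{A.2}, \ref{C.odd}, \ref{BD.odd}, \ref{D.2}, Proposition \ref{ex}) supply. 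With those two inputs your incidence count closes; without them, and with the central strata mishandled as above, the proposal is a correct outline of the easy part with the hard part unproved.
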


\begin{table}[t]
\begin{tabular}{cc|c|c||cc|r} 
\multicolumn{4}{c||}{$G$ classical}&\multicolumn{3}{c}{$G$ exceptional}\\
type of $G$&$\car k$&$b(G)$&Reference&type of $G$&$\car k$&$b(G)$ \\ \hline
$A_\ell$&$\ne 2$& $2.25(\ell+1)^2$& Cor.~\ref{A.odd}&$G_2$&$\ne 3$&48 \\
$A_\ell$&$= 2$&$2\ell^2 + 4\ell$ & Cor.~\ref{A.2}&$F_4$&$\ne 2$&240\\
$B_\ell$ ($\ell \ge 3$)&$\ne 2$&$8\ell^2$&Cor.~\ref{BD.odd}&$E_6$&any&360 \\
$C_\ell$ ($\ell \ge 2$)&$\ne 2$&$6\ell^2$&Cor.~\ref{C.odd}&$E_7$&any&630 \\
$D_\ell$ ($\ell \ge 4$)&$\ne 2$&$2(2\ell-1)^2$&Cor.~\ref{BD.odd}&$E_8$&any&1200 \\
$D_\ell$ ($\ell \ge 4$)&$=2$&$4\ell^2$&Cor.~\ref{D.2}
\end{tabular}
\caption{Bound $b(G)$ appearing in Theorem \ref{MT}.  The reference for the exceptional types is Prop.~\ref{ex}.} \label{classical.table}
\end{table}

Of course, $\Lie(G)_v \supseteq \ker \drho$, so equality means that $\Lie(G)_v$ is as small as possible.  In this case, we write that $\Lie(G)$ acts \emph{virtually freely} on $V$.  This notion is the natural generalization of ``generically freely'' to allow for the possibility that $G$ does not act faithfully.  We actually prove a somewhat stronger statement than Theorem \ref{MT}, see Theorem \ref{MTp} below.

Note that $\ker \drho$ can be read off the weights of $V$.  If $\ker \drho$ is a proper ideal in $\Lie(G)$, then (as $\car k$ is assumed not special) it is contained in the center of $\Lie(G)$, i.e., $\Lie(Z(G))$.  The restrictions of $\rho$ to $Z(G)$ and of $\drho$ to $\Lie(Z(G))$ are determined by the equivalence classes of the weights of $V$ modulo the root lattice.

If we restrict our focus to representations $V$ that are restricted and irreducible, Theorem \ref{MT} quickly settles whether $V$ is virtually free for all but finitely many types of $G$:

\begin{speccor} \label{irred}
Suppose $G$ has type $A_\ell$ for some $\ell > 15$; type $B_\ell$, $C_\ell$, or $D_\ell$ with $\ell > 11$; or exceptional type, over an algebraically closed field $k$ such that $\car k$ is not special for $G$.  For $\rho \!: G \to \GL(V)$ an irreducible representation of $G$ whose highest weight is restricted, $\Lie(G)_v = \ker \drho$ if and only if $\dim V > \dim G$.
\end{speccor}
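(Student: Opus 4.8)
The statement is a biconditional, and I would prove the two implications by quite different routes, with the dimension of the generic stabilizer $\Lie(G)_v$ as the common thread. Write $\g = \Lie(G)$. Two standing facts drive everything. First, as recalled in the discussion following Theorem \ref{MT}, $\ker\drho$ is either all of $\g$ (only for the trivial module) or is contained in $\Lie(Z(G))$, which under the present hypotheses on $\car k$ has dimension at most $1$. Second, the orbit map $x \mapsto \drho(x)v$ from $\g$ to $V$ has kernel $\Lie(G)_v$, so $\dim \Lie(G)_v \ge \dim G - \dim V$ for every $v$.

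For the implication $\dim V > \dim G \Rightarrow \Lie(G)_v = \ker\drho$, the plan is to apply Theorem \ref{MT} with the subquotient $X := V$. First I would verify the hypothesis $V^{[\g,\g]} = 0$: the fixed space $V^{[\g,\g]}$ is $\Ad(G)$-stable because $[\g,\g]$ is, hence a $G$-submodule, so by irreducibility it is $0$ or $V$; the latter would force $\g$ to act through its abelian quotient $\g/[\g,\g]$ and thus $\dim V = 1$, contradicting $\dim V > \dim G$. The substantive point is to upgrade $\dim V > \dim G$ to $\dim V > b(G)$, which is a \emph{gap} statement: for the stated rank bounds, no restricted irreducible module has dimension in the interval $(\dim G, b(G)]$. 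I would deduce this from the classification of small-degree irreducible representations (L\"ubeck \cite{Lubeck}): beyond a short explicit list of genuinely small modules, the next family already has dimension of one higher order of magnitude in $\ell$. For example, in type $A_\ell$ the modules of dimension $\le b(G) = 2.25(\ell+1)^2$ are exhausted by the trivial, natural, $\wedge^2$, and $\Sym^2$ modules and the adjoint (with their duals), all of dimension $\le \dim G = \ell^2+2\ell$; the next module $\wedge^3$, of dimension $\binom{\ell+1}{3}$, exceeds $2.25(\ell+1)^2$ precisely once $\ell > 15$, which is the source of the stated threshold.

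For the converse I would argue the contrapositive: if $\dim V \le \dim G$ then $\Lie(G)_v \supsetneq \ker\drho$ for generic $v$. When $\dim V < \dim G - 1$, the bound $\dim\Lie(G)_v \ge \dim G - \dim V$ already exceeds $\dim\ker\drho \le 1$, so we are done. This leaves the finitely many modules with $\dim V \in \{\dim G - 1, \dim G\}$ — essentially the adjoint — which the classification again identifies. For these I would either compute the generic stabilizer directly (for a regular semisimple $v$ in the adjoint module, $\Lie(G)_v$ is a Cartan subalgebra, of dimension $\ell > 1$), or quote \cite{GurLawther}: $\dim V \le \dim G$ forces $G_v$ to be infinite for generic $v$, whence $\Lie(G)_v \supseteq \Lie(G_v^\circ)$ is positive-dimensional and cannot be contained in the central ideal $\ker\drho$, since no positive-dimensional subgroup of a simple $G$ has central Lie algebra. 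The exceptional types form a finite list and are settled by the explicit values of $b(G)$ together with these finite boundary checks.

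The main obstacle is the gap statement in the first implication. The thresholds $\ell > 15$ (type $A$) and $\ell > 11$ (types $B, C, D$) must hold uniformly in the characteristic, and in small characteristic the restricted irreducibles can be strictly smaller than their characteristic-zero analogues, so the argument cannot rest on Weyl's dimension formula but must run through L\"ubeck's explicit tables type by type (and, for $\car k = 2$, against the correspondingly larger values of $b(G)$ in Table \ref{classical.table}). Checking that no sporadic small module slips into $(\dim G, b(G)]$ for every type, characteristic, and the finitely many ranks just above the threshold is the delicate part of the proof.
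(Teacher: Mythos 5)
Your backward direction is essentially sound and matches the paper's (dimension count when $\dim V < \dim G - \dim \z(\g)$; explicit Cartan-subalgebra stabilizer for the adjoint-type modules $L(\hst)$, which is the paper's Example \ref{adjoint}). But the forward direction rests on a claim that is false: the ``gap statement'' that no restricted irreducible has dimension in $(\dim G, b(G)]$ fails for types $B_\ell$, $D_\ell$ and for every exceptional type, and no amount of table-checking will repair it, because such modules genuinely exist. For $G = \SO_n$ with $\car k \ne 2$, the irreducible factor $W$ of $\Sym^2(k^n)$ has dimension $n(n+1)/2 - 1$ or $n(n+1)/2 - 2$, which exceeds $\dim G = n(n-1)/2$ for all $n$ yet is far below $b(G) = 8\ell^2$ (type $B$) or $2(2\ell-1)^2$ (type $D$); so $W$ sits in the interval at every rank. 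Likewise the interval is inhabited for the exceptional groups: $G_2$ with highest weight $02$ (dimension $26$ or $27 > 14 = \dim G_2$), $G_2$ with weight $11$ of dimension $38$ in characteristic $7$, $F_4$ with weight $0010$ of dimension $196$ in characteristic $3$, and $E_6$ with weights $000002$ or $000010$ of dimensions $324$ and $351$. Theorem \ref{MT} says nothing about these modules, so your proposal leaves the forward implication unproven precisely there. The paper closes these cases with additional inputs your sketch lacks: Lemma \ref{SO.S2} computes the generic stabilizer on $W$ directly (a generic diagonal trace-zero symmetric matrix has zero centralizer among the skew-symmetric matrices), the $G_2$ module of dimension $26$/$27$ factors through $\SO_7$ and reduces to that lemma, and the remaining exceptional modules are quoted from Guerreiro's thesis \cite[Th.~4.3.1]{Guerreiro}. (For types $A$ and $C$ your picture is accurate: there the interval really is empty in the stated rank range, the critical module for $A_\ell$ being $\wedge^3$ of dimension $\binom{\ell+1}{3}$, which exceeds $2.25(\ell+1)^2$ exactly when $\ell \ge 16$; the paper handles $\ell \ge 20$ via L\"ubeck's bound $\dim V > \ell^3/8$ and checks $16 \le \ell \le 19$ against the tables in \cite{luebeck}, as you anticipate.)

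A secondary but genuine flaw is your verification that $V^{[\g,\g]} = 0$. That $V^{[\g,\g]}$ is a $G$-submodule is fine, but the step ``$V^{[\g,\g]} = V$ forces $\dim V = 1$'' does not follow from $G$-irreducibility alone: a Frobenius twist $L(\la)^{[p]}$ is $G$-irreducible of large dimension on which $\g$, a fortiori $[\g,\g]$, acts trivially. Restrictedness of the highest weight must enter here, and it does so through the paper's Lemma \ref{quo}\eqref{quo.kill}, which produces a simple root $\alpha$ with $x_\alpha x_{-\alpha} v = \qform{\la,\alpha^\vee}v \ne 0$ on a highest weight vector. Similarly, your alternative route through \cite{GurLawther} for the boundary cases quietly assumes that no positive-dimensional connected subgroup of $G$ has Lie algebra inside $\z(\g)$; this is true but needs an argument (a unipotent subgroup is ruled out because $\z(\g)$ consists of semisimple elements, while a $\Gm$-subgroup requires a root-pairing computation), whereas the paper avoids the issue entirely by computing the stabilizer in Example \ref{adjoint}.
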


This is proved in Section \ref{irred.sec}.

Note that the bound $b(G)$ from Theorem \ref{MT} holds for most $k$ and is $\Theta(\dim G) = \Theta((\rank G)^2)$ in big-$O$ notation, meaning that it grows like $(\rank G)^2$.  In the special case $\car k = 0$ one can find a similar result in \cite{AndreevPopov} where the bound is $\Theta((\rank G)^3)$, which was used in the (existing) proof of the characteristic 0 version of the results of Section \ref{advert}.
The fact that the exponent in our result is 2 (and not 3) is leveraged in two ways: (1) the restricted irreducible representations not covered by Theorem \ref{MT} and Corollary \ref{irred} are among those enumerated in \cite{luebeck} and (2) it encompasses all but a very small number of tensor decomposable irreducible representations.  We settle these cases in a separate paper, \cite{GG:irred}, because the arguments are rather different and more computational.  Fields with $\car k$ special are treated in \cite{GG:special}, which also includes an example to show that the conclusion of Theorem \ref{MT} does not hold for such $k$.  Combining the results of these two papers with \cite{GurLawther}, we get descriptions of the stabilizer $G_v$ as a group scheme when $V$ is irreducible, which we announce in Section \ref{advert}.  This paper contains the main part of the proof of the results in Section \ref{advert} for Lie algebras.

\subsection*{Remarks on the proof}
Corollary \ref{irred} may be compared to the main result of Guerreiro's thesis \cite{Guerreiro}, which classifies the irreducible $G$-modules that are also $\Lie(G)$-irreducible such that the kernel of $\drho$ is contained in the center of $\Lie(G)$ with somewhat weaker bounds on $\dim V$.  (See also \cite{Auld} and \cite{GG:spin} for other results on specific representations.)  Our methods are different in the sense that Guerreiro relied on computations with the weights of $V$, whereas we largely work with the natural module.  We do refer to Guerreiro's thesis in the proof of Corollary \ref{irred} to handle a few specific representations.

The change in perspective that leads to our stronger results in fewer pages is the replacement of the popular inequality \eqref{ineq.mother}, which involves the action on the specific representation $V$, with \eqref{ineq.gen}, which only involves the dimension of $V$ and properties of the adjoint representation $\Lie(G)$.   Thus our proof of Theorem \ref{MT} depends in only a small way on $V$, providing a dramatic simplification.   Furthermore we prove new bounds on the number of conjugates $e(x)$ of a given
non-central element $x \in \Lie(G)$ that suffice to generate a Lie subalgebra containing the
derived subalgebra (with special care being needed in small characteristic, see, for example, Theorem \ref{qr.thm}); these results should be of independent interest.  Our bounds depend upon the conjugacy class and
give upper bounds for the dimension of fixed spaces for elements in the class.  As a special case, we extend the main result of \cite{CSUW}, see Proposition \ref{gen.thm}.
We note that some generation bounds are 
  known in the setting of groups, see for example
\cite{GurSaxl} or \cite{GordeevSaxl}.

We also prove a result that is of independent interest.  We show in Theorem 
\ref{qr.thm} that the only proper
irreducible Lie subalgebras of $\sl_n$ containing a maximal toral subalgebra
occur in characteristic $2$ and any such is conjugate to the Lie algebra
of symmetric matrices of trace $0$.

\subsection*{Notation}
For convenience of exposition, we will assume in most of the rest of the paper that $k$ is algebraically closed of characteristic $p \ne 0$.  This is only for convenience, as our results for $p$ prime immediately imply the corresponding results for characteristic zero: simply lift the representation from characteristic 0 to $\Z$ and reduce modulo a sufficiently large prime.

We say that $\car k$ is \emph{special} for $G$ if $\car k = p \ne 0$ and the Dynkin diagram of $G$ has a $p$-valent bond, i.e., if $\car k = 2$ and $G$ has type $B_n$ or $C_n$ for $n \ge 2$ or type $F_4$, or if $\car k = 3$ and $G$ has type $G_2$.  (Equivalently, these are the cases where $G$ has a very special isogeny.)  This definition is as in \cite[\S10]{St:rep}, \cite[p.~15]{seitzmem}, and \cite{Premet:supp}; in an alternative history, these primes might have been called ``extremely bad'' because they are a subset of the very bad primes --- the lone difference is that for $G$ of type $G_2$, the prime 2 is very bad but not special.

Let $G$ be an affine group scheme of finite type over $k$.  If $G$ is additionally smooth, then we say that $G$ is an \emph{algebraic group}.  An algebraic group $G$ is \emph{simple} if it is not abelian, is connected, and has no connected normal subgroups $\ne 1, G$; for example $\SL_n$ is simple for every $n \ge 2$.

If $G$ acts on a variety $X$, the stabilizer $G_x$ of an element $x \in X(k)$ is a sub-group-scheme of $G$ with $R$-points
\[
G_x(R) = \{ g \in G(R) \mid gx = x \}
\]
for every $k$-algebra $R$.  A statement ``for generic $x$'' means that there is a dense open subset $U$ of $X$ such that the property holds for all $x \in U$.

If $\Lie(G) = 0$ then $G$ is finite and \'etale.  If additionally $G(k) = 1$, then $G$ is the trivial group scheme $\Spec k$.  (Note, however, that when $k$ has characteristic $p \ne 0$, the sub-group-scheme $\mu_p$ of $\mu_{p^2}$ has the same Lie algebra and $k$-points.  So it is not generally possible to distinguish closed-sub-group schemes by comparing their $k$-points and Lie algebras.)

We write $\g$ for $\Lie(G)$ and similarly $\spin_n$ for $\Lie(\Spin_n)$, etc.  We put $\z(\g)$ for the center of $\g$; it is the Lie algebra of the (scheme-theoretic) center of $G$.  As $\car k = p$, the Frobenius automorphism of $k$ induces a ``$p$-mapping'' $x \mapsto x^{[p]}$ on $\g$.
When $G$ is a sub-group-scheme of $\GL_n$ and $x \in \g$, the element $x^{[p]}$ is the $p$-th power of $x$ with respect to the typical, associative multiplication for $n$-by-$n$ matrices, see \cite[\S{II.7}, p.~274]{DG}.   An element $x \in \g$ is \emph{nilpotent} if $x^{[p]^n} = 0$ for some $n > 0$, \emph{toral} if $x^{[p]} = x$, and \emph{semisimple} if $x$ is contained in the Lie $p$-subalgebra of $\g$ generated by $x^{[p]}$, i.e., is in the subspace spanned by $x^{[p]}, x^{[p]^2}, \ldots$, cf.~\cite[\S2.3]{StradeF}.  
  
\subsection*{Acknowledgements} We thank Brian Conrad for his helpful advice on group schemes.
Part of this research was performed while Garibaldi was at the Institute for Pure and Applied Mathematics (IPAM) at UCLA, which is supported by the National Science Foundation.
Guralnick was partially supported by NSF grants DMS-1265297, DMS-1302886, and DMS-1600056.

%%%%%%%%%%%%%%%%%%%%%%%%%%%%%%%%%%%%%%%%%%%%%
\section{Key inequalities} \label{key.sec}

\subsection*{Inequalities}
Put $\lie := \Lie(G)$ and choose a representation $\rho \!: G \to \GL(V)$.  For $x \in \lie$, put
\[
V^x := \{ v \in V \mid \drho(x)v = 0 \}
\]
and $x^G$ for the $G$-conjugacy class $\Ad(G)x$ of $x$.

\begin{lem} \label{ineq} 
For $x \in \g$, 
\begin{equation} \label{mother0}
x^G \cap \g_v = \emptyset \quad \text{for generic $v \in V$}
\end{equation}
is implied by:
\begin{equation} \label{ineq.mother}
  \dim x^G + \dim V^x < \dim V,
\end{equation}
which is implied by:
\begin{equation} \label{ineq.gen}
\parbox{4.25in}{{There exist $e > 0$ and $x_1, \ldots x_e \in x^G$ such that 
the subalgebra $\mathfrak{s}$ of $\g$ generated by $x_1, \ldots, x_e$ has $V^{\mathfrak{s}} = 0$ and
$e\cdot \dim x^G < \dim V$.}}
\end{equation}
\end{lem}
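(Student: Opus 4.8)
The plan is to prove the two implications separately: that \eqref{ineq.mother} forces \eqref{mother0}, and that \eqref{ineq.gen} forces \eqref{ineq.mother}. The first is a dimension count on an incidence variety; the second is elementary linear algebra combined with the fact that $G$-conjugate elements have fixed spaces of equal dimension.

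For \eqref{ineq.mother} $\Rightarrow$ \eqref{mother0}, I would introduce the incidence variety
\[
Z := \{ (v, y) \in V \times x^G \mid \drho(y) v = 0 \}
\]
and study its two projections. The projection $\pi_2 \colon Z \to x^G$ has fiber $V^y \times \{y\}$ over $y$; since any $y \in x^G$ equals $\Ad(g) x$ for some $g$ and $\drho(\Ad(g)x) = \rho(g)\,\drho(x)\,\rho(g)^{-1}$, the space $V^y = \rho(g) V^x$ has dimension $\dim V^x$ independent of $y$. As $x^G$ is an irreducible $G$-orbit and every fiber has the common dimension $\dim V^x$, the fiber-dimension theorem gives $\dim Z = \dim x^G + \dim V^x$. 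The projection $\pi_1 \colon Z \to V$ has image exactly $\{ v \mid x^G \cap \g_v \ne \emptyset \}$, whose closure has dimension at most $\dim Z$. Under \eqref{ineq.mother} this is strictly less than $\dim V$, so the image lies in a proper closed subset and a generic $v$ satisfies \eqref{mother0}.

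For \eqref{ineq.gen} $\Rightarrow$ \eqref{ineq.mother}, the first step is to observe that the common kernel of the generators $x_1, \dots, x_e$ is already annihilated by the whole subalgebra $\mathfrak{s}$: if $\drho(x_i) v = 0$ for all $i$, then $\drho([x_i, x_j]) v = \drho(x_i)\drho(x_j) v - \drho(x_j) \drho(x_i) v = 0$, and inductively $v \in V^{\mathfrak{s}}$. Hence $V^{\mathfrak{s}} = \bigcap_{i=1}^e V^{x_i}$, so the hypothesis $V^{\mathfrak{s}} = 0$ says this intersection is trivial. The map $V \to \bigoplus_{i=1}^e V/V^{x_i}$ sending $v$ to its tuple of cosets is then injective, giving $\dim V \le e \cdot \codim V^x$ (each $V^{x_i}$ has codimension $\codim V^x$ because $x_i$ is conjugate to $x$). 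Thus $\codim V^x \ge (\dim V)/e$, and combining this with the numerical hypothesis $\dim x^G < (\dim V)/e$ yields $\dim x^G < \codim V^x = \dim V - \dim V^x$, which is precisely \eqref{ineq.mother}.

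The routine content is the pair of dimension estimates; the one point that demands care is the constancy of the fiber dimension of $\pi_2$, which rests on transitivity of the $G$-action on $x^G$ together with the conjugation formula for $\drho$ — and this same conjugacy is what makes each $V^{x_i}$ share the dimension of $V^x$ in the second argument. No obstacle of genuine depth appears: the substance of the lemma is precisely the bookkeeping that reduces the geometric avoidance condition \eqref{mother0} to the purely numerical and linear-algebraic condition \eqref{ineq.gen}.
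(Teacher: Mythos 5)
Your proof is correct and follows essentially the same route as the paper: your incidence variety $Z \subseteq V \times x^G$ with its two projections repackages the paper's dimension count on the map $G \times V^x \to V$ whose image is $\bigcup_{y \in x^G} V^y$, and your injectivity of $V \to \bigoplus_{i} V/V^{x_i}$ is the same estimate the paper obtains by iterating $\dim(U \cap U') \ge \dim U + \dim U' - \dim V$, both resting on the same $G$-equivariance of $\mathrm{d}\rho$ to equate the dimensions $\dim V^{x_i} = \dim V^x$. Your explicit bracket computation verifying $V^{\mathfrak{s}} = \bigcap_{i} V^{x_i}$ spells out a step the paper leaves implicit, but nothing differs in substance.
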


In many uses of \eqref{ineq.gen}, one takes $\mathfrak{s}$ to be $\g$ or $[ \g, \g]$.

\begin{proof} 
Suppose \eqref{ineq.mother} holds and let $v \in V$.   Put 
\[
V(x) := \{ v \in V \mid \text{there is $g \in G(k)$ s.t.\ $xgv = 0$} \} = \bigcup_{y \in x^G} V^y.
\]
Define $\alpha\!: G \times V^x \to V$ by $\alpha(g,w) = gw$, so the image of $\alpha$ is precisely $V(x)$.  The fiber over $gw$ contains $(gc^{-1}, cw)$ for $\Ad(c)$ fixing $x$, and so $\dim V(x) \le \dim x^G + \dim V^x$.  Then \eqref{ineq.mother} implies $\overline{V(x)}$ is a proper subvariety of $V$, whence \eqref{mother0}.
(This observation is essentially in \cite[Lemma 4]{AndreevPopov}, \cite[\S3.3]{Guerreiro}, or \cite[Lemma 2.6]{GG:spin}, for example, but we have repackaged it here for the convenience of the reader.)

Now assume \eqref{ineq.gen}.  
Iterating the formula $\dim(U \cap U') \ge \dim U + \dim U' - \dim V$ for subspaces $U, U'$ of $V$ gives
\begin{equation} \label{ineq.1}
\dim\left(\bigcap\nolimits_i V^{x_i} \right) \ge (\sum\nolimits_i \dim V^{x_i}) - (e-1) \dim V.
\end{equation}
As $\drho$ is $G$-equivariant (and not just a representation of $\g$), we have $\dim V^{x_i} = \dim V^x$.
The left side of \eqref{ineq.1} is zero by  hypothesis, hence $\dim V - \dim V^x \ge \frac1{e} \dim V$ and it follows that $\dim V^x \le (1 - 1/e) \dim V$.  Now $\dim x^G < \frac1{e} \dim V$ implies \eqref{ineq.mother}.
\end{proof}

We will verify \eqref{ineq.mother} in many cases, compare Theorem \ref{MTp}.  To do so, we actually prove \eqref{ineq.gen}, where the inequality only involves $V$ through the term $\dim V$.   This allows us to focus on the element $x$ and its action on the natural module rather than attempting to analyze $V^x$ directly, for which it is natural to require some hypothesis on the structure of $V$ beyond simply a bound on the dimension, such as that $V$ is irreducible as is assumed in \cite{Guerreiro}.  When verifying \eqref{ineq.gen}, one finds that, roughly speaking, when $\dim x^G$ is small, $e$ is large and vice versa.  Therefore, at least for the classical groups, we take some care to bound the product $e \cdot \dim x^G$ instead of bounding each term independently.

\subsection*{Comparing subalgebras}

Exploiting the fact that there are only finitely many $G$-conjugacy classes of toral and nilpotent elements of $\g$ for $G$ semisimple, we obtain as in \cite[\S1]{GG:spin}:
\begin{lem} \label{ineq.spin}
Suppose $G$ is semisimple over an algebraically closed field $k$ of characteristic $p > 0$, and  let $\lsub$ be a $G$-invariant subspace of $\lie$.  
\begin{enumerate}
\item \label{mother.lie2} If inequality \eqref{mother0} holds for every toral or nilpotent $x \in \g \setminus \lsub$, then  $\g_v \subseteq \lsub$ for generic $v \in V$.

\item \label{mother.lie} If $\lsub$ consists of semisimple elements and \eqref{mother0} holds for every $x \in \g \setminus \lsub$ with $x^{[p]} \in \{ 0, x \}$, then $\g_v \subseteq \lsub$  for generic $v$ in $V$. $\hfill\qed$
\end{enumerate}
\end{lem}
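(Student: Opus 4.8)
The plan is to reduce the general statement about an arbitrary fixing element $x \in \g_v$ to the special elements (toral, nilpotent, or with $x^{[p]} \in \{0,x\}$) for which \eqref{mother0} is assumed, and then to exploit finiteness of the relevant conjugacy classes to intersect the finitely many dense open sets that \eqref{mother0} provides. The key structural input is that $\g_v$ is a \emph{restricted} subalgebra of $\g$: since $\drho$ respects the $p$-operation, $\drho(x^{[p]}) = \drho(x)^p$, so $\drho(x)v = 0$ forces $\drho(x^{[p]})v = 0$, i.e.\ $x^{[p]} \in \g_v$ whenever $x \in \g_v$. In particular, for any $x \in \g_v$ the whole restricted subalgebra generated by $x$, spanned by $x, x^{[p]}, x^{[p]^2}, \ldots$, lies in $\g_v$, and hence so do the semisimple and nilpotent parts $x_s, x_n$ of the restricted Jordan decomposition $x = x_s + x_n$, which live in that subalgebra (cf.~\cite{StradeF}).

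For the first assertion I would argue at the level of a single $v$. Fix $x \in \g_v \setminus \lsub$. Because $\lsub$ is a subspace and $x = x_s + x_n \notin \lsub$, at least one of $x_s, x_n$ lies outside $\lsub$, and both lie in $\g_v$ by the previous paragraph. If $x_n \notin \lsub$, then $x_n$ is a nilpotent element of $\g_v \setminus \lsub$, so $x_n^G \cap \g_v \ne \emptyset$. If instead $x_s \notin \lsub$, then the torus $\langle x_s\rangle_p \subseteq \g_v$ is spanned over $k$ by its toral elements (again \cite{StradeF}); since $x_s \notin \lsub$ and $\lsub$ is a $k$-subspace, some toral $t \in \langle x_s \rangle_p \setminus \lsub$, whence $t^G \cap \g_v \ne \emptyset$ with $t$ toral. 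Either way $\g_v$ meets $y^G$ for some toral or nilpotent $y \in \g\setminus\lsub$. Since $G$ is semisimple there are only finitely many $G$-classes of such $y$ (finitely many nilpotent classes, and every toral element is conjugate into a fixed maximal torus, whose toral elements form a finite $\F_p$-vector space), so intersecting the dense open sets coming from \eqref{mother0} over this finite list of classes yields a dense open $U$ with $\g_v \subseteq \lsub$ for all $v \in U$.

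For the second assertion the same reduction applies, but now I would use that $\lsub$ consists of semisimple elements in order to make do with the weaker hypothesis. Given $x \in \g_v \setminus \lsub$ with $x = x_s + x_n$: if $x_n \ne 0$, pick $m\ge 1$ minimal with $x_n^{[p]^m}=0$ and set $y := x_n^{[p]^{m-1}}$, so $y \ne 0$, $y^{[p]} = 0$, and $y \in \g_v$; being a nonzero nilpotent, $y$ is not semisimple, hence $y \notin \lsub$, and $y^G \cap \g_v \ne \emptyset$ with $y^{[p]} = 0$. If $x_n = 0$, then $x = x_s \notin \lsub$ is semisimple and, exactly as above, some toral $t \in \langle x_s\rangle_p \setminus \lsub$ has $t^{[p]} = t$ and $t^G \cap \g_v \ne \emptyset$. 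In both cases the offending element satisfies $x^{[p]} \in \{0,x\}$, only finitely many such classes occur, and intersecting the corresponding dense open sets finishes the argument.

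The main obstacle is the reduction itself---showing that a nonempty $\g_v \setminus \lsub$ must contain an element of the prescribed special type lying outside $\lsub$---which rests on two facts in the restricted setting that must be invoked with care: that $x_s$ and $x_n$ genuinely lie in the restricted subalgebra generated by $x$ (so that they inherit membership in $\g_v$), and that a torus is spanned over $k$ by its toral elements (so that $x_s \notin \lsub$ forces a \emph{toral} element outside the subspace $\lsub$). Once these are in place, the finiteness of toral and nilpotent conjugacy classes for semisimple $G$ makes the passage from ``\eqref{mother0} for each special $x$'' to ``a single dense open $U$'' automatic, just as in \cite[\S1]{GG:spin}.
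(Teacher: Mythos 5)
Your proposal is correct and follows essentially the same route as the paper, which gives no inline proof but defers to \cite[\S1]{GG:spin}: there the argument is exactly that $\g_v$ is a restricted subalgebra (since $\drho$ is restricted), that the Jordan decomposition $x = x_s + x_n$ and the spanning of tori by toral elements reduce membership of $\g_v \setminus \lsub$ to a toral or $p$-nilpotent witness outside the $G$-invariant subspace $\lsub$, and that finiteness of the toral and nilpotent conjugacy classes for semisimple $G$ lets one intersect the finitely many dense open sets furnished by \eqref{mother0}. Your handling of the two cases, including using that a nonzero nilpotent cannot be semisimple in part (2), matches the intended argument.
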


Often we apply the preceding lemma with $\lsub = \z(\g)$, the Lie algebra of the center $Z(G)$.  For $G$ reductive, $Z(G)$ is a diagonalizable group scheme \cite[XXII.4.1.6]{SGA3.3:new}, so $Z(G)_v = \ker(\rho \vert_{Z(G)})$.  We immediately obtain:

\begin{lem} \label{central}
Suppose $G$ is reductive.
If, for generic $v \in V$, $\g_v \subseteq \z(\g)$, then $\g$ acts virtually freely on $V$. $\hfill\qed$
\end{lem}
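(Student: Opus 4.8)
The plan is to establish the two inclusions $\ker\drho\subseteq\g_v$ and $\g_v\subseteq\ker\drho$ for generic $v$, the first of which is trivial: if $\drho(x)=0$ as an endomorphism of $V$, then $\drho(x)v=0$, so $\ker\drho\subseteq\g_v$ for \emph{every} $v$. Thus the whole content reduces to the reverse inclusion on a dense open subset.

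For that reverse inclusion I would start from the hypothesis, which furnishes a dense open $U\subseteq V$ with $\g_v\subseteq\z(\g)$ for all $v\in U$; on $U$ we therefore have $\g_v=\z(\g)\cap\g_v$, and it suffices to prove $\z(\g)\cap\g_v\subseteq\ker\drho$. The engine for this is the identity $Z(G)_v=\ker(\rho\vert_{Z(G)})$ quoted just above the statement, valid for generic $v$ because the center $Z(G)$ of the reductive group $G$ is diagonalizable. The idea is simply to differentiate this equality of sub-group-schemes of $Z(G)$.

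The key step is the passage to Lie algebras via dual numbers. For $x\in\z(\g)=\Lie(Z(G))$, the point $1+\eps x\in Z(G)(k[\eps])$ lies in $Z(G)_v(k[\eps])$ exactly when $\bigl(1+\eps\,\drho(x)\bigr)v=v$, i.e.\ when $\drho(x)v=0$; hence $\Lie(Z(G)_v)=\z(\g)\cap\g_v$. The identical computation, now imposing the condition for all of $V$ at once, gives $\Lie(\ker(\rho\vert_{Z(G)}))=\z(\g)\cap\ker\drho$. Applying $\Lie(-)$ to $Z(G)_v=\ker(\rho\vert_{Z(G)})$ then yields $\z(\g)\cap\g_v=\z(\g)\cap\ker\drho$ for generic $v$. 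Intersecting that dense open set with $U$, I get $\g_v=\z(\g)\cap\g_v=\z(\g)\cap\ker\drho\subseteq\ker\drho\subseteq\g_v$, forcing $\g_v=\ker\drho$; that is, $\g$ acts virtually freely on $V$.

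The one point requiring care — and hence the main obstacle — is this group-scheme-to-Lie-algebra translation. Since $\car k=p$, the schemes $Z(G)_v$ and $\ker(\rho\vert_{Z(G)})$ need not be smooth, so one cannot use a tangent-space-of-a-variety shortcut and must instead compute $\Lie$ functorially through $k[\eps]$-points, verifying that it is compatible with the stabilizer and kernel constructions exactly as above. Everything else is formal bookkeeping of the inclusions, which is why the conclusion follows ``immediately'' once the diagonalizability of $Z(G)$ is in hand.
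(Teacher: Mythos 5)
Your proof is correct and takes essentially the same route as the paper: the lemma is stated there as an immediate consequence of the identity $Z(G)_v = \ker(\rho\vert_{Z(G)})$ for generic $v$ (which holds because $Z(G)$ is diagonalizable), and your argument is precisely the differentiation of that group-scheme identity, combined with the trivial inclusion $\ker\drho \subseteq \g_v$. The dual-numbers computation of $\Lie(Z(G)_v) = \z(\g)\cap\g_v$ and $\Lie(\ker(\rho\vert_{Z(G)})) = \z(\g)\cap\ker\drho$, with its attention to possible non-smoothness of these subschemes, is exactly the bookkeeping the paper leaves implicit behind its ``we immediately obtain.''
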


\subsection*{Examples}

\begin{eg}[$\SL_2$] \label{A1.eg}
Recall that an irreducible representation $\rho \!: \SL_2 \to \GL(V)$ of $\SL_2$ is specified by its highest weight $w$, a nonnegative integer.  Let $\car k =: p \ne 0$.  We claim:
\emph{
\begin{enumerate}
	\renewcommand{\theenumi}{\roman{enumi}}
	\item \label{A1.0} If $\car k$ divides $w$ (e.g., if $w = 0$), then $\drho(\sl_2) = 0$.
	\item \label{A1.2} If (a) $w = 1$ or (b) $\car k \ne 2$ and $w = 2$, then $\sl_2$ does not act virtually freely on $V$.
	\item \label{A1.4} If $w = p^e + 1$ for some $e > 0$, then $\sl_2$ acts virtually freely on $V$ but \eqref{ineq.mother} fails for some noncentral $x \in \sl_2$ with $x^{[p]} \in \{ 0, x \}$.
	\item \label{A1.general} Otherwise, \eqref{ineq.mother} holds for noncentral $x \in \sl_2$ with $x^{[p]} \in \{ 0, x \}$, and in particular $\sl_2$ acts virtually freely on $V$.
\end{enumerate}}

To see this, 
write $w = \sum_{i \ge 0} w_i p^i$ where $0 \le w_i < p$.  By Steinberg, $V$ is isomorphic (as an $\SL_2$-module) to $\otimes_i L(\omega_i)^{[p]^i}$, where the exponent $[p]^i$ denotes the $i$-th Frobenius twist, and the irreducible module $L(w_i)$ with highest weight $w_i$ is also the Weyl module with highest weight $w_i$ by \cite{WinterSL2}, of dimension $w_i + 1$.
Thus, as a representation of $\sl_2$, $V$ is isomorphic to a direct sum of $c := \prod_{i>0} (w_i+1)$ copies of $L(w_0)$.  This proves \eqref{A1.0}, so we suppose for the remainder of the proof that $w_0 > 0$.

As in the previous paragraph, $L(1)$ is the natural representation (with generic stabilizer a maximal nilpotent subalgebra) and $L(2)$ (when $p \ne 2$) is the adjoint action on $\sl_2$ (with generic stabilizer a Cartan subalgebra).  This verifies \eqref{A1.2}.

We investigate now \eqref{ineq.mother}.  For $x$ nonzero nilpotent or noncentral toral, we have $\dim(x^{\SL_2}) = 2$.  For $x$ nonzero nilpotent, $L(w_0)^x$ is the highest weight line.  If $x^{[p]} = x$, then up to conjugacy $x$ is diagonal with entries $(a, -a)$ for some $a \in \F_p$; as $x$ is non-central, $p \ne 2$ and $\dim L(w_0)^x = 0$ or 1 depending on whether $w_0$ is odd or even.  Assembling these, we find
$\dim (x^{\SL_2}) + \dim L(w)^x \le 2+c$ with equality for $x$ nonzero nilpotent, whereas $\dim L(w) = cw_0 + c$.  We divide the remaining cases via the product $cw_0$, where we have already treated the case \eqref{A1.2} where $c = 1$ and $w_0 = 1$ or 2. 

Suppose  $c = 2$ and $w_0 = 1$, so we are in case \eqref{A1.4}.  The action of $\sl_2$ on $V$ via $\drho$ is the same as the action of $\sl_2$ on two copies of the natural module, equivalently, on 2-by-2 matrices by left multiplication.  A generic matrix $v$ is invertible, so $(\sl_2)_v = 0$.  Yet we have verified in the previous paragraph that \eqref{ineq.mother} fails for $x$ nonzero nilpotent, proving \eqref{A1.4}.

The case \eqref{A1.general} is where $cw_0 > 2$, where we have verified \eqref{ineq.mother}, completing the proof of the claim.

As a corollary, we find: \emph{$\sl_2$ fails to act virtually freely on $V$ if and only if (a) $w = 1$ or (b) $\car k \ne 2$ and $w = 2$.}  Moreover, when $\sl_2$ acts faithfully on $V$ (i.e, $w_0$ is odd), we have: \emph{$\sl_2$ fails to act generically freely on $V$ if and only if $w = 1$ if and only if $\dim V \le \dim \SL_2$.}
\end{eg}

\begin{eg} \label{dual.eg} 
Let $x \in \g$.  \emph{If $\dim x^G + \dim (V^*)^x < \dim (V^*)$, then \eqref{ineq.mother} holds for $x$.}  This is obvious, because $\drho(x)$ and $-\drho(x)^\top$ have the same rank.
\end{eg}

%%%%%%%%%%%%%%%%%%%%%%%%%%%%%%%%%%%%%%%%%%%%%%%%%%%%%%%%%%%%%%%%%%%%%
\section{Interlude: semisimplification} 

For Theorem \ref{MT}, we consider representations $V$ of $G$ that need not be semisimple.  For each chain of submodules $0 =: V_0 \subseteq V_1 \subseteq V_2 \subseteq \cdots \subseteq V_n := V$ of $G$, we can construct the $G$-module $V' := \oplus_{i=1}^n V_i / V_{i-1}$.  For example, if each $V_i / V_{i-1}$ is an irreducible (a.k.a.~simple) $G$-module then $V'$ is the \emph{semisimplification} of $V$.  In this section, we discuss to what extent results for $V$ correspond to results for $V_i/V_{i-1}$ and for $V'$, using the notation of this paragraph and writing $\rho \!: G \to \GL(V)$ and $\rho' \!: G \to \GL(V')$ for the actions.  

\subsection*{From the subquotient to $V$}

\begin{eg} \label{ineq.sub}
Suppose that for some $x \in \g$ and some $1 \le i \le n$, we have
\[
\dim x^G + \dim (V_i/V_{i-1})^x < \dim (V_i/V_{i-1}).
\]
\emph{We claim that \eqref{ineq.mother} holds for $x$.}  By induction it suffices to consider the case $i = 2$ and a chain $V_1 \subseteq V_2 \subseteq V$.

Suppose first that $V_1 = 0$.  Then
$\dim x^G + \dim V^x \le \dim x^G + \dim V_2^x + \dim V/V_2$, whence the claim. Now suppose that $V_2 = V$, so $(V_2/V_1)^*$ is a submodule of $V^*$; the claim follows by Example \ref{dual.eg}.  Combining these two cases gives the full claim.
\end{eg}

There is an analogous statement about the dimension of generic stabilizers.

\begin{eg} \label{summand}
For each $1 \le i \le n$ and generic $v \in V$ and generic $w \in V_i/V_{i-1}$, we claim that $\dim \g_v \le \dim \g_w$.  Take $w \in V_i / V_{i-1}$ to be the image of a generic $\hat{w} \in V_i$.  Then $\dim \g_v \le \dim \g_{\hat{w}}$ by upper semicontinuity of dimension and clearly $\dim \g_{\hat{w}} \le \dim \g_w$.
\end{eg}

\subsection*{From $V'$ to $V$}

\begin{eg} \label{ineq.ss}
When checking the inequality \eqref{ineq.mother}, it suffices to do it for $V'$.  More precisely, for $x \in \g$, we have: \emph{If $\dim x^G + \dim (V')^x < \dim V'$, then $\dim x^G + \dim V^x < \dim V$.}  This is obvious because $\dim V^x \le \sum \dim (V_i/V_{i-1})^x$.
\end{eg}

The following strengthens Example \ref{summand}.

\begin{prop}
For generic $v \in V$ and $v' \in V'$, we have $\dim \g_v \le \dim \g_{v'}$.  
\end{prop}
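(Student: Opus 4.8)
The plan is to realize $V'$ as a flat degeneration of $V$ through a one-parameter family of $\g$-actions, and then read off the inequality from upper semicontinuity of stabilizer dimension applied to the total family. The intuition is that passing to the associated graded can only create symmetry, never destroy it, and a $\Gm$-degeneration makes this precise.

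First I would fix a vector-space splitting $V \cong \bigoplus_{i=1}^n W_i$ with $W_i \cong V_i/V_{i-1}$, adapted to the filtration so that $V_j = \bigoplus_{i \le j} W_i$. Since each $V_j$ is a $G$-submodule, $\drho(x)$ is block upper triangular for every $x \in \g$: its $(i,j)$ block vanishes unless $i \le j$. I would then introduce the cocharacter $\lambda \!: \Gm \to \GL(V)$ acting on $W_i$ by $t^{-i}$ and set $\drho_t(x) := \lambda(t)\,\drho(x)\,\lambda(t)^{-1}$ for $t \in \Gm$. The $(i,j)$ block of $\drho_t(x)$ is $t^{\,j-i}\drho(x)_{ij}$, which involves no negative powers of $t$ because $j-i \ge 0$ on the only nonzero blocks; hence $t \mapsto \drho_t(x)$ extends to a morphism $\A^1 \to \End(V)$, regular in $t$ and linear in $x$, and at $t=0$ only the diagonal blocks survive, so $\drho_0 = \drho'$ is exactly the action on $V'$.

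Next I would study the function
\[
d \!: \A^1 \times V \to \Z_{\ge 0}, \qquad d(t,v) = \dim \ker\bigl( x \mapsto \drho_t(x)v \bigr),
\]
the stabilizer dimension of $v$ under $\drho_t$. As the nullity of a linear map whose matrix entries are regular functions of $(t,v)$, the function $d$ is upper semicontinuous. For $t \ne 0$ one has $\drho_t(x)v = 0$ iff $\drho(x)\bigl(\lambda(t)^{-1}v\bigr)=0$, so $d(t,v)=\dim\g_{\lambda(t)^{-1}v}$ for the original action; as $v$ ranges over $V$ the generic value of $d(t,\cdot)$ is the generic stabilizer dimension $d_0 := \dim\g_v$ for $V$, independent of $t \in \Gm$. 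Since a generic value of an upper semicontinuous function is its minimum, this gives $d(t,v)\ge d_0$ for all $v$ and all $t \ne 0$.

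Finally I would invoke density: the closed set $\{(t,v) : d(t,v)\ge d_0\}$ contains $\Gm \times V$, which is dense in the irreducible variety $\A^1 \times V$, hence equals all of it. In particular $d(0,v)\ge d_0$ for every $v$, so for generic $v' \in V'$ we obtain $\dim\g_{v'}=d(0,v')\ge d_0=\dim\g_v$, as claimed. I expect the only delicate point to be the regularity of the family at $t=0$: it is precisely the triangularity forced by the filtration (nonzero blocks only for $i\le j$) that makes conjugation by $\lambda(t)$ introduce nonnegative powers of $t$, ensuring that the limit $\drho_0$ is the honest associated-graded action $\drho'$ rather than a degenerate one.
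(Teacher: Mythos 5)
Your proof is correct, and it takes a genuinely different route from the paper's. The paper argues by induction on the number of filtration steps: it first treats the case $\dim V/W = 1$ by introducing the transporter subalgebra $\tor = \{ x \in \g \mid \drho(x)v \in W \}$, observing that $\g_{v_0} = \tor_{v_0}$ for $v_0$ outside $W$ and that $\g_{v'} = \tor_w$ for generic $v' \in V'$, and concluding by upper semicontinuity of $\dim \tor_{(\cdot)}$; the general case is then reduced to this one by choosing a splitting and replacing $(\g, V, V')$ with $(\tor, W + kv, W \oplus k\bar{v})$. You instead build the standard $\Gm$-contraction of $\drho$ to its associated graded: the triangularity forced by the filtration makes $\drho_t(x) = \lambda(t)\drho(x)\lambda(t)^{-1}$ regular at $t = 0$ with limit $\drho'$, and then a single application of upper semicontinuity of nullity on the irreducible total space $\A^1 \times V$ (generic value $=$ minimum, then pass to the closure of $\Gm \times V$) yields the inequality. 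Each step checks out: the $(i,j)$ block scales by $t^{j-i} \ge 0$ on the nonzero blocks, $d(t,v) = \dim \g_{\lambda(t)^{-1}v}$ for $t \ne 0$ so the generic value on each slice is the constant $d_0$, and density of $\Gm \times V$ transfers the bound to $t = 0$. What your approach buys: no induction on $n$, a uniform one-step argument, and in fact the stronger conclusion that $\dim \g_{v'} \ge d_0$ for \emph{every} $v' \in V'$, not merely generic ones. What the paper's approach buys: it stays within elementary manipulations of subalgebras and stabilizers and makes the transporter $\tor$ explicit, which is in the same spirit as the surrounding Examples \ref{ineq.sub} and \ref{summand}. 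Both arguments work over an arbitrary field, since all varieties involved are geometrically irreducible and semicontinuity needs no hypotheses on $k$.
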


\begin{proof}
By induction on the number $n$ of summands in $V'$, we may assume that $V' = W \oplus V/W$ for some $\g$-submodule $W$ of $V$.

Suppose first that $\dim V/W = 1$.  Pick $v \in V$ with nonzero image $\bar{v} \in V/W$.  Put $\tor := \{ x \in \g \mid \drho(x)v \in W \}$, a subalgebra of $\g$ sometimes called the transporter of $v$ in $W$.  
A generic vector $v' \in V'$ is of the form $w \oplus c \bar{v}$ for $w \in W$ and $c \in \kx$.  Evidently, $\g_{v'} = \tor_w$.  By upper semicontinuity of dimension, $\dim \tor_{v_0} \le \dim \tor_w$ for generic $v_0 \in V$.  On the other hand, writing $v_0 = w_0 + \la v$ for $\la \in \kx$ and $w_0 \in W$, for $x \in \g_{v_0}$ we find $\drho(x)v = -\frac1\la \drho(x) w_0 \in W$, so $\g_{v_0} = \tor_{v_0}$, proving the claim.

In the general case, pick a splitting $\phi \!: V/W \hookrightarrow V$ and so identify $V$ with $V'$ as vector spaces.  We may intersect open sets defining generic elements in $V$ and $V'$ and so assume the two notions agree under this identification.  Let $v := w + \phi(\bar{v})$ be a generic vector in $V$, where $w \in W$ and $\bar{v} \in V/W$ is the image of $v$; $v' := w \oplus \bar{v}$ is a generic vector in $V'$.
Defining $\tor$ as in the previous paragraph, we have $\g_v, \g_{v'} \subseteq \tor$. 
Replacing $\g$, $V$, $V'$ with $\tor$, $W + kv$, $W \oplus k\bar{v}$ and referring to the previous paragraph gives the claim.
\end{proof}

If $\g$ acts generically freely on $V'$ (i.e., $\g_{v'} = 0$), then the proposition says that $\g$ acts generically freely on $V$.  This immediately gives the following statement about group schemes:

\begin{cor}
If $G_{v'}$ is finite \'etale for generic $v' \in V'$, then $G_v$ is finite \'etale for generic $v \in V$. $\hfill \qed$
\end{cor}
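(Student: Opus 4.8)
The plan is to reduce the corollary to the Lie-algebra statement already recorded in the paragraph preceding it, via the dictionary ``finite \'etale $\iff$ trivial Lie algebra.'' The bridge is the standard identification $\Lie(G_w) = \g_w$ for the scheme-theoretic stabilizer of any $w$: an element $\xi \in \g$ lies in $\Lie(G_w)$ exactly when $1 + \eps\xi$ fixes $w$ over $k[\eps]/(\eps^2)$, i.e.\ when $\drho(\xi)w = 0$, so $\Lie(G_w)$ is precisely the kernel $\g_w$ of $\xi \mapsto \drho(\xi)w$. Since $G_w$ is an affine group scheme of finite type, the fact recorded in the Notation section---that $\Lie(H) = 0$ forces $H$ to be finite \'etale---combined with the converse (an \'etale $k$-group scheme is smooth of dimension $0$, so has vanishing Lie algebra) gives the clean equivalence: for every $w$, the scheme $G_w$ is finite \'etale if and only if $\g_w = 0$.

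With this dictionary in hand I would argue as follows. By hypothesis there is a dense open $U' \subseteq V'$ on which $G_{v'}$ is finite \'etale, hence on which $\g_{v'} = \Lie(G_{v'}) = 0$. Intersecting $U'$ with the dense open locus supplied by the preceding Proposition, and letting $U \subseteq V$ be the corresponding generic locus, the Proposition yields $\dim \g_v \le \dim \g_{v'} = 0$ for $v \in U$. Therefore $\g_v = 0$, equivalently $\Lie(G_v) = 0$, for every $v$ in the dense open set $U$; by the cited fact each such $G_v$ is then finite \'etale, which is exactly the assertion.

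There is no genuine obstacle here---this is why the text labels the corollary immediate---but two minor points warrant a word. First, one must confirm the identification $\Lie(G_w) = \g_w$ at the scheme-theoretic level, so that \'etaleness is indeed detected by the vanishing of $\g_w$; this is also where the finiteness of $G_v$ comes for free, since over a field $\dim G_v \le \dim_k \Lie(G_v)$, so $\Lie(G_v) = 0$ already forces $\dim G_v = 0$. Second, one must choose compatibly the three generic loci---where $G_{v'}$ is \'etale and where the Proposition's inequality holds for $v$ and for $v'$---but since each is dense open and $V$, $V'$ are affine spaces, hence irreducible, their common refinement is again dense open and no genericity is lost. I therefore expect the entire argument to be the short translation above rather than any substantive computation.
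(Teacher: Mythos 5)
Your proposal is correct and is exactly the argument the paper intends: the corollary is deduced from the preceding Proposition via the identification $\Lie(G_w) = \g_w$ and the fact from the Notation section that a group scheme of finite type over $k$ is finite \'etale precisely when its Lie algebra vanishes, which is why the paper marks it $\qed$ with no further proof. Your two side remarks (verifying the scheme-theoretic identification and intersecting the dense open loci) are the right points to check and pose no difficulty.
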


While generic freeness of $V'$ implies generic freeness of $V$ for the action by the Lie algebra $\g$, it does not do so for the action by the algebraic group $G$, as the following example shows.

\begin{eg}
Take $G = \Ga$ acting on $V = \A^3$ via 
\[
\rho(r) := \left( \begin{smallmatrix} 1&r &r^p \\ 0 & 1 & 0 \\ 0 & 0 & 1 \end{smallmatrix} \right).
\]
Let $V_2 \subset V$ be the subspace of vectors whose bottom entry is zero.  Then $G$ acts on $V_2$ via $r \mapsto \stbtmat{1}{r}{0}{1}$ and in particular a generic $v_2 \in V_2$ has $G_{v_2} = 1$.  On the other hand, a generic vector $v := \left( \begin{smallmatrix} x \\ y \\ z \end{smallmatrix} \right)$ in $V$ has $G_v$ the \'etale subgroup with points $\{ r \mid ry + r^p z = 0 \}$, i.e., the kernel of the homomorphism $zF + y \Id \!: \Ga \to \Ga$ for $F$ the Frobenius map.
\end{eg}

Direct sums have better properties with respect to calculating generic stabilizers, see for example \cite[Prop.~8]{Popov:Borel} and \cite[Lemma 2.15]{Loetscher:edsep}.

%%%%%%%%%%%%%%%%%%%%%%%%%%%%%%%%%%%%%%%%%%%%%%%%%%%%%%%%%%%%%%%%%%%%%%%%%
\section{Lemmas on the structure of $\g$}

When $\car k$ is not zero (more precisely, not very good), then it may happen that $\g$ depends not just on the isogeny class of $G$, but may depend on $G$ up to isomorphism.  Moreover, $\g$ need not be perfect even when $G$ is simple.  In this section we record for later use some fac ts that do hold in this level of generality.

%The references \cite{Hogeweij}, \cite{Hogeweij:th}, \cite{Hiss}, and \cite[\S1]{Pink:cpt} provide detailed information on the structure of $\g$.

\begin{lem} \label{quo}
Let $G$ be a simple algebraic group over $k$ such that $(G, \car k) \ne (\Sp_{2n}, 2)$ for all $n \ge 1$.  Put $\pi \!: \Gt \to G$ for the simply connected cover of $G$ and $\gt := \Lie(\Gt)$.  
Then:
\begin{enumerate}
	\item \label{quo.der} $[\g, \g] = \dpi(\gt)$.
	\item \label{quo.kill} If $V$ is an irreducible representation of $G$ whose highest weight is restricted, then $V^{[\g, \g]} = 0$.
\end{enumerate}
\end{lem}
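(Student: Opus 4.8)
The plan is to run both parts through the root-space decomposition with respect to compatible maximal tori $\Tt \subseteq \Gt$ and $T := \pi(\Tt) \subseteq G$, writing $\gt = \tor \oplus \bigoplus_{\alpha} \gt_\alpha$ (with $\tor := \Lie(\Tt)$) and $\g = \Lie(T) \oplus \bigoplus_\alpha \g_\alpha$ for the common root system. Since the simply connected cover is a central isogeny, and a root subgroup $U_\alpha$ meets the central kernel trivially, $\pi$ restricts to an isomorphism $U_\alpha \to U_\alpha$; hence $\dpi$ carries $\gt_\alpha$ isomorphically onto $\g_\alpha$. Because $\Gt$ is simply connected, $X_*(\Tt)$ equals the coroot lattice, so the coroots span $\tor$ over $k$ and $\dpi(\tor)$ is exactly the $k$-span of the elements $h_\alpha := [e_\alpha, e_{-\alpha}] \in \Lie(T)$. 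Thus $\dpi(\gt) = \mathrm{span}_k\{h_\alpha\} \oplus \bigoplus_\alpha \g_\alpha$.

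For part \eqref{quo.der} one inclusion is free: bracketing the decomposition of $\g$ shows the toral component of $[\g,\g]$ is spanned by the $h_\alpha = [e_\alpha,e_{-\alpha}]$, so $[\g,\g] \subseteq \mathrm{span}_k\{h_\alpha\}\oplus\bigoplus_\alpha\g_\alpha = \dpi(\gt)$ with no hypothesis on $\car k$. For the reverse inclusion I would use perfectness of $\gt$: if $\gt = [\gt,\gt]$, then $\dpi(\gt) = \dpi([\gt,\gt]) = [\dpi(\gt),\dpi(\gt)] \subseteq [\g,\g]$. So the whole of \eqref{quo.der} reduces to showing that $\gt$ is perfect precisely when $(G,\car k) \ne (\Sp_{2n},2)$. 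As $\dpi(\tor) = \mathrm{span}\{h_\alpha\}$ already lies in $[\g,\g]$, perfectness is equivalent to $e_\alpha \in [\gt,\gt]$ for every root $\alpha$.

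To establish perfectness I would split on the characteristic. If $\car k \ne 2$, then $[h_\alpha,e_\alpha] = \langle\alpha,\alpha^\vee\rangle e_\alpha = 2e_\alpha$ with $2$ invertible, so every $e_\alpha \in [\gt,\gt]$ and $\gt$ is perfect in every type. If $\car k = 2$, then for each $\alpha$ I would exhibit either a root $\beta$ with $\langle\alpha,\beta^\vee\rangle$ odd (producing $e_\alpha$ from $[h_\beta,e_\alpha]$) or a decomposition $\alpha = \beta+\gamma$ with odd Chevalley constant $N_{\beta\gamma}$ (producing $e_\alpha$ from $[e_\beta,e_\gamma]$). In the simply-laced types any neighbor $\beta$ gives $\langle\alpha,\beta^\vee\rangle = \pm 1$, failing only at rank one ($A_1 = C_1$); short roots of $B_n,C_n,F_4,G_2$ always pair oddly with a suitable long coroot; long roots of $B_n$ ($n \ge 3$), $F_4$, $G_2$ admit either an odd Cartan integer or a two-term decomposition with $N_{\beta\gamma} = \pm 1$; and the one irreducible obstruction is a long root $2e_i$ of $C_n$, whose sole decomposition $(e_i+e_j)+(e_i-e_j)$ has $N_{\beta\gamma} = \pm 2 \equiv 0$ and all of whose Cartan integers are even. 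This isolates the failure to type $C_n = \Sp_{2n}$ in characteristic $2$, matching the excluded case. (Alternatively one cites Hogeweij's determination of the derived subalgebras of Chevalley algebras.) I expect this characteristic-$2$ root-system bookkeeping --- in particular confirming that $B_n$ for $n \ge 3$, as well as $F_4$ and $G_2$, genuinely survive --- to be the main technical obstacle; everything else is formal.

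For part \eqref{quo.kill} I would combine \eqref{quo.der} with restricted-module theory. First, $V^{[\g,\g]}$ is a $G$-submodule: for $g \in G$, $x \in [\g,\g]$, and $v \in V^{[\g,\g]}$ one has $x(gv) = g(\Ad(g^{-1})x\cdot v) = 0$ because $[\g,\g]$ is $\Ad(G)$-stable, so irreducibility forces $V^{[\g,\g]} \in \{0, V\}$. Pulling the $G$-action back along $\pi$, the Lie algebra $\gt$ acts on $V$ through $\drho\circ\dpi$, so by \eqref{quo.der} the image of $\gt$ in $\mathfrak{gl}(V)$ is exactly $\drho([\g,\g])$ and hence $V^{[\g,\g]} = V^{\gt}$. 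Since the highest weight is restricted, $V = L(\lambda)$ is simple as a $\gt$-module by Curtis--Steinberg, and a nontrivial simple module has no nonzero fixed vectors: a full fixed space would make $\gt$, hence $[\g,\g]$, act as zero, forcing $\dim V = 1$ and $\lambda = 0$. Excluding the trivial representation, we conclude $V^{[\g,\g]} = V^{\gt} = 0$. I would flag that the identification $V^{[\g,\g]} = V^{\gt}$ uses the full equality $[\g,\g] = \dpi(\gt)$ and not merely the easy inclusion, which alone would only give $V^{\gt} \subseteq V^{[\g,\g]}$.
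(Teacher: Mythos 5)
Your proposal is correct. For part (1) it is essentially the paper's proof: both obtain $[\g,\g] \subseteq \dpi(\gt)$ from the root-space decomposition (the paper phrases this as $\dpi(\gt) \supseteq \qform{\g_\alpha}$, an ideal in $\g$ with abelian quotient) and obtain the reverse inclusion from perfectness $[\gt,\gt]=\gt$; the only real difference is that the paper simply cites Premet (for $\car k$ not special) and Hogeweij (in general), where you verify perfectness directly in characteristic $2$ --- your bookkeeping is accurate (simply laced types via adjacent roots, short roots of the doubly laced types via odd Cartan integers, long roots of $B_n$ for $n \ge 3$ and of $F_4$, $G_2$ via an odd Cartan integer or structure constant, with type $C_n$ the unique failure), and you sensibly keep Hogeweij as a fallback. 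One shared subtlety: perfectness of $\gt$ depends only on the root system, so your ``precisely when'' reduction, exactly like the paper's citation, tacitly reads the exclusion $(G, \car k) \ne (\Sp_{2n},2)$ type-wise; as literally worded the hypothesis admits, e.g., adjoint groups of type $C_n$ in characteristic $2$, where $\gt = \sp_{2n}$ is imperfect yet the conclusion still holds (the larger cocharacter lattice of the adjoint torus pairs oddly with the long roots, putting the long root spaces into $[\g,\g]$ directly). This is a quirk of the statement, not a defect of your argument relative to the paper's. For part (2) you take a genuinely different route: the paper argues elementarily, choosing a simple root $\alpha$ with $\langle \lambda, \alpha^\vee\rangle$ nonzero in $k$ (possible since $\lambda$ is restricted and nonzero) and computing $x_\alpha x_{-\alpha} v = \langle \lambda, \alpha^\vee \rangle v \ne 0$ on a highest weight vector, so that $V^{[\g,\g]}$ is a proper $G$-submodule, hence zero; you instead invoke Curtis--Steinberg irreducibility of the restricted module $L(\lambda)$ as a $\gt$-module. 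Both are valid (and both, like the lemma itself, tacitly exclude the trivial module, which you alone make explicit); the paper's computation is self-contained and avoids importing Curtis's theorem, while your route costs a bigger citation but yields the stronger fact that $V$ is already $\gt$-irreducible. Your closing flag is on target: part (2) does need the full equality of part (1) and not merely the easy inclusion, and the paper's proof uses it at exactly the same spot, to place the root vectors inside $[\g,\g]$.
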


\begin{proof}
The map $\dpi$ restricts to an isomorphism $\gt_\alpha \xrightarrow{\sim} \g_\alpha$ for each root $\alpha$, and in particular $\dpi(\gt) \supseteq \qform{\g_\alpha}$, an ideal in $\g$.  As $\g/\qform{\g_\alpha}$ is abelian, $\qform{\g_\alpha} \supseteq [\g, \g]$. 

Conversely, $[\gt, \gt] = \gt$, see  \cite[Lemma 2.3(ii)]{Premet:supp} if $\car k$ is not special and \cite[6.13]{Hogeweij:th} in general.  So $\dpi(\gt) = \dpi([\gt, \gt]) \subseteq [\g, \g]$.

To see \eqref{quo.kill}, 
write the highest weight $\la$ of $V$ as a sum of fundamental dominant weights $\la = \sum c_i \omega_i$.  As $\la$ is restricted, there is some $c_i \in \Z$ whose image in $k$ is not zero.  Put $\alpha$ for the simple root such that $\qform{\omega_i, \alpha^\vee} = c_i$.  Writing $x_\alpha, x_{-\alpha}$ for basis elements of the root subalgebras for $\pm \alpha$ and $v$ for a highest weight vector in $V$, we have $x_\alpha x_{-\alpha} v = \qform{\la, \alpha^\vee} v \ne 0$ as in the proof of \cite[Lemma 4.3(a)]{St:rep}, so $V^{\dpi(\gt)} = V^{[\g, \g]}$ is a proper submodule of $V$, hence is zero.
\end{proof}

\begin{cor} \label{quo.stab}
Let $G$ be a simple algebraic group over $k$ and put $\pi \!: \Gt \to G$ for the simply connected cover.  If $\rho \!: G \to \GL(V)$ is a representation such that $\drho\, \dpi = 0$ (i.e., $\gt$ acts trivially on $V$), then $\g$ acts virtually freely on $V$.
\end{cor}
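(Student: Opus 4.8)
The plan is to show that the hypothesis collapses the entire $\g$-action on $V$ onto the action of a maximal torus, and then to read off the generic stabilizer from the resulting weight-space decomposition. Fix a maximal torus $T \subseteq G$, set $\tor := \Lie(T)$, and use the root-space decomposition $\g = \tor \oplus \bigoplus_\alpha \g_\alpha$. The hypothesis $\drho\,\dpi = 0$ says precisely $\dpi(\gt) \subseteq \ker\drho$. As recorded at the start of the proof of Lemma \ref{quo}, $\dpi$ restricts to an isomorphism $\gt_\alpha \xrightarrow{\sim} \g_\alpha$ for each root $\alpha$, so $\g_\alpha = \dpi(\gt_\alpha) \subseteq \dpi(\gt) \subseteq \ker\drho$; importantly, this root-space statement needs no hypothesis on $(G,\car k)$. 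Hence $\drho$ annihilates every root space, and since $\g = \tor + \sum_\alpha \g_\alpha$ we conclude $\drho(\g) = \drho(\tor)$.

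Next I would exploit that $\drho(\tor)$ is a faithful torus of operators. Because $T$ is diagonalizable over the algebraically closed field $k$, the module $V$ splits as $V = \bigoplus_\mu V_\mu$ into $T$-weight spaces, and $\tor$ acts on $V_\mu$ through the scalar $\mathrm{d}\mu$. Thus every $x \in \g$ acts on $V_\mu$ as a scalar $s_\mu(x)$, with $\drho(x) = 0$ if and only if $s_\mu(x) = 0$ for all weights $\mu$ occurring in $V$. For $v = \sum_\mu v_\mu$ we then have $\drho(x)v = \sum_\mu s_\mu(x)\,v_\mu$, so $x \in \g_v$ amounts to $s_\mu(x)v_\mu = 0$ for every $\mu$. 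The locus on which all (finitely many) components $v_\mu$ are nonzero is dense open, and for such generic $v$ these equations force $s_\mu(x) = 0$ for all $\mu$, i.e. $\drho(x) = 0$. Therefore $\g_v = \ker\drho$ for generic $v$, which is exactly the assertion that $\g$ acts virtually freely on $V$.

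The step demanding the most care is the first one, and the pitfall to avoid is the temptation to write $\drho(\g) = \drho(\z(\g))$ via a splitting $\g = \z(\g) \oplus [\g,\g]$: such a splitting genuinely fails in characteristic $p$ (already for adjoint types, where $\z(\g)$ need not surject onto $\g/[\g,\g]$), so one must route the argument through $\tor$ rather than through $\z(\g)$. The annihilation of root spaces is what makes this work, and it must be invoked in the characteristic-free form of Lemma \ref{quo}; the excluded case $(G,\car k) = (\Sp_{2n},2)$ causes no trouble, since there $\pi$ is the identity and the hypothesis degenerates to $\drho = 0$, for which virtual freeness is immediate. Everything downstream is the standard fact that a faithful torus of operators has trivial generic stabilizer.
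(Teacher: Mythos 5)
Your proof is correct and takes essentially the same route as the paper's: both arguments kill the root spaces (the paper via Lemma \ref{quo}, you via its opening step, the characteristic-free isomorphism $\dpi \colon \gt_\alpha \to \g_\alpha$), reduce the image of $\g$ in $\gl(V)$ to the image of $\tor$, and conclude from simultaneous diagonalizability on the $T$-weight spaces that $\g_v = \ker \drho$ for generic $v$. Your version is marginally more uniform, since relying only on the root-space isomorphism lets you absorb the $(\Sp_{2n},2)$ case rather than splitting off the simply connected case as the paper does, but the mechanism is identical.
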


\begin{proof} 
If $G$ is simply connected --- i.e., $G = \Gt$ --- then $\drho = 0$ and this is trivial.  So assume $G$ is not simply connected and apply Lemma \ref{quo}.  There is a torus $T$ in $G$ such that $\g = [\g, \g] + \tor$ as a vector space.  In particular, the images of $\g$ and $\tor$ in $\gl(V)$ are the same.  The image consists of simultaneously diagonalizable matrices, so $\tor$ acts virtually freely, ergo the same is true for $\g$.
\end{proof}

\begin{eg}[$\PGL_2$] \label{PGL2}   % this is used in part II
Let $\rho \!: \PGL_2 \to \GL(V)$ be an irreducible representation.  The composition $\SL_2 \to \PGL_2 \xrightarrow{\rho} \GL(V)$ is an irreducible representation $L(w)$ of $\SL_2$ as in Example \ref{A1.eg} with $w$ even.  We claim that \emph{$\PGL_2$ fails to act virtually freely on $V$ if and only $\car k \ne 2$ and $w = 2$.}

If $\car k \ne 2$, the induced map $\sl_2 \to \pgl_2$ is an isomorphism and the claim follows from Example \ref{A1.eg}.

If $\car k = 2$, then, as $w$ is even, the representation of $\SL_2$ is isomorphic to the Frobenius twist $L(w/2)^{[2]}$ and $\sl_2$ acts trivially (and $\rho$ is not faithful).  By Corollary \ref{quo.stab}, the action of $\pgl_2$ is virtually free, verifying the claim.

Suppose now additionally that $\rho$ is faithful, whence $\car k \ne 2$.  Applying the above, we find that \emph{$\pgl_2$ fails to act generically freely if and only if $w = 2$ if and only if $\dim V \le \dim \PGL_2$.}
\end{eg}

\begin{eg}[adjoint representation] \label{adjoint}
Let $G$ be a simple algebraic group and put $L(\hst)$ for the irreducible representation with highest weight the highest root $\hst$. It is a composition factor of the adjoint module.  

If $\car k = 2$ and $G$ has type $C_n$ for $n \ge 1$ (including the cases $C_1 = A_1$ and $C_2 = B_2$), then $\g$ acts virtually freely on $L(\hst)$.  In case $G$ is simply connected, $L(\hst)$ is a Frobenius twist of the natural representation of dimension $2n$ (since $\hst$ is divisible by 2 in the weight lattice), so $\g$ acts as zero (and, in particular, virtually freely); compare Example \ref{A1.eg}\eqref{A1.0} for the case $n = 1$.  If $G$ is adjoint, then we apply Corollary \ref{quo.stab}.

Now suppose that $\car k$ is not special for $G$ and $(\type G, \car k) \ne (A_1, 2)$.
Put $\pi \!: \Gt \to G$ for the simply connected cover of $G$.  The hypotheses give that $L(\hst) \cong \gt / \z(\gt)$ as $G$-modules and that Cartan subalgebras of $\gt$ and $\g$ are Lie algebras of maximal tori.
It follows, then, that there is an open subset $U$ of $\gt$ that meets $\Lie(\Tt)$ for every maximal torus $\Tt$ of $\Gt$ such that for $a \in U$ the subalgebra
$\Nil(a, \gt) := \cup_{m > 0} \ker (\ad a)^m$ has minimal dimension (i.e., $a$ is regular in the sense of \cite[\S{XIII.4}]{SGA3.2}).  Pick $a \in U \cap \Tt$, put $\bar{a} \in L(\hst)$ for the image of $a$, and set 
\[
\gt_{\bar{a}} = \{ x \in \gt \mid \ad(x) \bar{a} \in \z(\gt) \}.
\]
Then 
\[
\Lie(\Tt) \subseteq \gt_{\bar{a}} \subseteq \Nil(a, \gt) = \Lie(\Tt),
\]
where the last equality is by \cite[Cor.~XIII.5.4]{SGA3.2}.
The image $T$ of $\Tt$ in $G$ is a maximal torus that fixes $\bar{a}$, so $\g_{\bar{a}}$ is generated by $\Lie(T)$ and the root subgroups it contains.  But any such root subgroup would be the image of the corresponding root subgroup of $\gt$, which does not stabilize $\bar{a}$, and therefore $\g_{\bar{a}} = \Lie(T)$.  In particular, $\g$ does not act virtually freely on $L(\hst)$.
\end{eg}

%\begin{eg} \label{sp.spin}\todo{Is this used or still necessary?}
%Let $G = \Sp_{2n}$ over a field $k$ with $\car k = 2$, so $\car k$ is special for $G$.  Fix a nondegenerate quadratic form $q$ on $k^{2n}$ whose bilinearization is the symplectic form stabilized by $G$; then, as a subspace of $\sp_{2n}$, $\so(q) = [\sp_{2n}, \sp_{2n}]$ does not depend on $q$ and we denote it by $\so_{2n}$.  Moreover, it has codimension $2n$ in $\sp_{2n}$, which shows that some hypothesis on $G$ or $k$ is necessary in Lemma \ref{quo}\eqref{quo.der}.
%
%Consider the irreducible ``spin'' representation $\rho \!: \Sp_{2n} \to \GL(V)$ for some $n \ge 2$ over a field of characteristic 2.  It has dimension $2^n$ and factors through the special isogeny $\pi \!: \Sp_{2n} \to \Spin_{2n+1}$.  The image $\dpi(\sp_{2n})$ in $\spin_{2n+1}$ has dimension $2n+1$ with socle $\dpi(\so_{2n})$, the 1-dimensional center of $\spin_{2n+1}$, compare \cite{Hiss}, \cite{Hogeweij}, or \cite[\S7.1]{CGP2}.  Now $\rho$ is the composition of $\pi$ with the spin representation of $\Spin_{2n+1}$, which is minuscule, so the center of $\spin_{2n+1}$ acts by scalars on $V$.  It follows that $\ker \drho$ is $\ker \dpi$, the subalgebra of $\sp_{2n}$ generated by the short root subalgebras, which equals the derived subalgebra $[\so_{2n}, \so_{2n}]$.
%\end{eg}

\begin{lem} \label{quo.gen}
Suppose $G$ is a simple algebraic group such that $\car k$ is not special and $(G, \car k) \ne (\SL_2, 2)$.
If $\s$ is a subalgebra of $\g$ such that $\s + \z(\g) \supseteq [\g, \g]$, then $\s \supseteq [\g, \g]$.
\end{lem}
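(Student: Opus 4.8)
The plan is to reduce the whole statement to the single fact that, under these hypotheses, the ideal $[\g, \g]$ is \emph{perfect}, i.e.\ equal to its own derived subalgebra. Writing $\mathfrak{a} := [\g, \g]$, the point is that once $[\mathfrak{a}, \mathfrak{a}] = \mathfrak{a}$ is known, a one-line bracket computation forces $\mathfrak{a} \subseteq \s$.

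First I would verify that $\mathfrak{a}$ is perfect. The hypotheses are exactly what is needed to invoke Lemma~\ref{quo}: the pairs $(\Sp_{2n}, 2)$ excluded there are all accounted for, since $(\Sp_2, 2) = (\SL_2, 2)$ is excluded by assumption and, for $n \ge 2$, the pair $(\Sp_{2n}, 2) = (C_n, 2)$ has $\car k = 2$ special, which is ruled out. Hence Lemma~\ref{quo}\eqref{quo.der} gives $\mathfrak{a} = \dpi(\gt)$. Since $\car k$ is not special, $[\gt, \gt] = \gt$ (as recorded in the proof of Lemma~\ref{quo}), and as $\dpi$ is a homomorphism of Lie algebras,
\[
\mathfrak{a} = \dpi(\gt) = \dpi([\gt, \gt]) = [\dpi(\gt), \dpi(\gt)] = [\mathfrak{a}, \mathfrak{a}].
\]

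With perfectness in hand I would finish as follows. Because $\z(\g)$ is central in $\g$, any bracket of two elements of $\s + \z(\g)$ collapses to the bracket of their $\s$-components, as the three terms carrying a factor in $\z(\g)$ vanish; hence $[\s + \z(\g), \s + \z(\g)] = [\s, \s] \subseteq \s$, the last inclusion using that $\s$ is a subalgebra. Bracketing the hypothesis $\mathfrak{a} \subseteq \s + \z(\g)$ then yields $\mathfrak{a} = [\mathfrak{a}, \mathfrak{a}] \subseteq [\s + \z(\g), \s + \z(\g)] \subseteq \s$, which is the desired conclusion.

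The only substantive content sits in the perfectness step, and that is precisely where both hypotheses are used; I expect no genuine obstacle beyond the bookkeeping needed to apply Lemma~\ref{quo}. That the exclusions really are necessary is visible in the excluded case $(\SL_2, 2)$, where $[\sl_2, \sl_2]$ is the $1$-dimensional central line rather than all of $\sl_2$, so $[\g, \g]$ fails to be perfect and the argument genuinely breaks down.
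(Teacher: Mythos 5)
Your reduction of the lemma to perfectness of $[\g,\g]$, via the observation that central summands die in brackets, is attractive and genuinely different from the paper's route (which, in the simply connected case, applies the same centrality trick only to conjugates of a highest root vector and then invokes Premet's generation lemma, followed by a separate reduction of non-simply-connected $G$ through the adjoint quotient). But there is a concrete gap: your perfectness claim is false in a case your hypotheses allow, namely $(G, \car k) = (\PGL_2, 2)$. Being ``special'' is a condition on the type, and characteristic $2$ is \emph{not} special for type $A_1$; the lemma excludes only the pair $(\SL_2, 2)$, not type $A_1$ in characteristic $2$ wholesale. For $G = \PGL_2$ with $\car k = 2$ one has $\gt = \sl_2$ and $[\gt, \gt] = kI = \z(\gt)$, one-dimensional, so the input $[\gt,\gt] = \gt$ to your displayed chain fails. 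Worse, the intermediate conclusion itself is false there, not merely unproved: $[\pgl_2, \pgl_2]$ is the two-dimensional span of the images of $E_{12}$ and $E_{21}$, and since $[E_{12}, E_{21}] = E_{11} + E_{22}$ maps to zero in $\pgl_2$, this algebra is abelian, hence as far from perfect as possible. (Your own closing remark identifies exactly this failure mode for $(\SL_2,2)$, but it occurs for the \emph{non-excluded} pair $(\PGL_2,2)$ too.) Relatedly, the sentence ``$[\gt,\gt] = \gt$ since $\car k$ is not special, as recorded in the proof of Lemma~\ref{quo}'' cannot be taken at face value for type $A_1$ in characteristic $2$, as the $\sl_2$ computation shows; the paper only ever invokes that fact after type $A_1$ has been eliminated.

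The patch is exactly the paper's opening move, which is why its proof does not suffer from this: one may assume $\z(\g) \ne 0$, since otherwise the hypothesis $\s + \z(\g) \supseteq [\g,\g]$ literally reads $\s \supseteq [\g,\g]$ and there is nothing to prove; and $\z(\g) \ne 0$ together with the exclusion of $(\SL_2, 2)$ rules out type $A_1$ entirely (in particular $\z(\pgl_2) = 0$ in characteristic $2$, so $(\PGL_2,2)$ falls into the trivial case). After this disposal, $[\gt,\gt] = \gt$ does hold in every remaining case covered by the citations in Lemma~\ref{quo}, the identity $[\g,\g] = \dpi(\gt) = \dpi([\gt,\gt]) = [\dpi(\gt),\dpi(\gt)]$ goes through, and your one-line bracket argument completes the proof. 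With that single case added, your argument is correct and is shorter than the paper's, replacing both Premet's generation lemma and the separate non-simply-connected reduction by the perfectness statement already recorded inside Lemma~\ref{quo}\eqref{quo.der}.
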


For the excluded case where $G = \SL_2$ and $\car k = 2$, $\z(\g) = [\g, \g]$ is the Lie algebra of every maximal torus.

\begin{proof}
We may assume that $\z(\g) \ne 0$, and in particular the center of $G$ is not \'etale and $G$ does not have type $A_1$.

If $G$ is equal to its simply connected cover $\Gt$,  then for each $g \in G(k)$, there is $z_g \in \z(\g)$ such that $z_g + gx_{\at} \in \s$, where $\at$ denotes the highest root.  Thus, $\s$ contains $[z_g + gx_{\at}, z_{g'} + g'x_{\at}] = [gx_{\at}, g'x_{\at}]$ for all $g, g' \in G(k)$, hence $\s = \g$ by \cite[Lemma 2.3(ii)]{Premet:supp}.

Suppose now that $G \ne \Gt$.  We may replace $\s$ with $\s \cap [\g, \g]$ and so assume $\s \subseteq [ \g, \g]$.  Put $\tilde{\s}$ for the inverse image $\dpi^{-1}(\s)$ of $\s$ in $\gt$ and $q \!: G \to \Gb$ for the natural map to the adjoint group.  The kernels of $\dq \, \dpi$ and $\dpi$ are the centers of $\gt$ and $\g$ respectively, so
 $\dq\, \dpi (\tilde{s}) = \dq(\s) \supseteq \dq([\g, \g])$ by hypothesis, which equals $\dq\, \dpi(\gt)$ by Lemma \ref{quo}\eqref{quo.der}.  We are done by the case where $G$ is simply connected.
\end{proof}

%%%%%%%%%%%%%%%%%%%%%%%%%%%%%%%%%%%%%%%%%%%%%%%%%%%%%%%%%%%%%%%%%%%%%
\section{Deforming semisimple elements to nilpotent elements} \label{deforming}

For $x \in \g$, we use the shorthand $x^{\Gm G}$ for the orbit of $x$ under the subgroup of $\GL(\g)$ generated by $\Gm$ and $\Ad(G)$.  
For $y$ in the closure of $x^{\Gm G}$, $\dim V^x \le \dim V^y$ by upper semicontinuity of dimension.

\begin{eg} \label{deform.eg}
Suppose that $x \in \g$ is non-central semisimple and let $\mathfrak{b}$ be a Borel subalgebra containing $x$.   Because $x$ is not central, there is a root subgroup $U_{\alpha}$ in the corresponding Borel subgroup that does not commute with $x$.   This implies that $x + \lambda y$ is in the same $G$-orbit as $x$
for all $\la \in k$ and $y$ in the corresponding root subalgebra, and similarly $\la x + y$ is in the same $G$-orbit as $\la x$ and in particular $y$ is in the closure of $x^{\Gm G}$, so $\dim V^x = \dim V^{\la x + y} \le \dim V^y$.
\end{eg}

\begin{lem} \label{deform}
Suppose $k$ is algebraically closed and let $G = \GL_n$ or $\SL_n$.
Let $x \in \lie$ be a semisimple element.   Then there exists a nilpotent element
$y \in \lie$ such that the following hold:
\begin{enumerate}
\item \label{deform.1} The $\Ad(G)$-orbits of $x$ and $y$ have the same dimension.
\item  $y$ is in the closure of $x^{\Gm G}$.
\item If the matrix $x$ has $r$ distinct eigenvalues, then $y^{r-1} \ne 0$ and $y^r = 0$. 
In particular, if $p := \car k \ne 0$ and $x$ is toral, then $y^{[p]}=0$.
\item \label{deform.rank} The rank of $y$ is the codimension of the largest eigenspace of $x$.  In particular, if $0$ is the eigenvalue of $x$ with greatest multiplicity, then $\rank y = \rank x$.
\item \label{deform.4} If $V$ is a finite dimensional rational $G$-module, then $\dim V^y \ge \dim V^x$
and $\dim y^G + \dim V^y \ge \dim x^G + \dim V^x$.
\end{enumerate}
\end{lem}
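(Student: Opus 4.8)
The plan is to exhibit $y$ as an explicit limit of scaled conjugates of $x$, so that property (2) holds by construction and the remaining properties follow by reading off the Jordan type of the limit. Since $k$ is algebraically closed and $x$ is semisimple, I may conjugate so that $x$ is diagonal, with distinct eigenvalues $a_1,\dots,a_r$ of multiplicities $m_1\ge m_2\ge\cdots\ge m_r$; set $\mu=(m_1,\dots,m_r)$ and let $\lambda$ be the conjugate partition, so that $\lambda$ has largest part $\lambda_1=r$ and exactly $m_1$ parts. First I would split $k^n=\bigoplus_{j=1}^{m_1}U_j$ into $x$-invariant blocks on which $x$ acts as a regular semisimple element with the $\lambda_j$ eigenvalues $a_1,\dots,a_{\lambda_j}$; a count shows that $a_i$ then occurs with total multiplicity $m_i$, so this block form is $\GL_n$-conjugate to $x$. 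On $U_j$ the element $x$ is conjugate to the companion matrix $C_j$ of $\prod_{i\le\lambda_j}(t-a_i)$, so $x$ is conjugate to $C:=\bigoplus_j C_j$.

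Next I would degenerate all blocks simultaneously by a single scalar. For $s\in\Gm$ the matrix $sC_j$ has characteristic polynomial $\prod_{i\le\lambda_j}(t-sa_i)$ and, when $s\ne0$, is conjugate to the companion matrix $C_j(s)$ of that polynomial, whose entries (the elementary symmetric functions of the $sa_i$) are polynomials in $s$ vanishing at $s=0$. Hence $\tilde C(s):=\bigoplus_j C_j(s)$ is conjugate to $sC$ and so lies in $x^{\Gm G}$ for every $s\ne0$, while depending polynomially on $s$. Letting $s\to0$, each $C_j(s)$ tends to the companion matrix of $t^{\lambda_j}$, a single nilpotent Jordan block of size $\lambda_j$; I define $y:=\lim_{s\to0}\tilde C(s)=\bigoplus_j(\text{Jordan block of size }\lambda_j)$. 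Then $y$ is nilpotent of Jordan type $\lambda$ and, as a limit of points of $x^{\Gm G}$, lies in $\overline{x^{\Gm G}}$, which is (2). Because $\Ad(\Gm)$ is trivial, $\GL_n$- and $\SL_n$-conjugacy coincide on $\gl_n$, and since $x$ and $y$ are trace-free the entire construction stays inside $\sl_n$ when $G=\SL_n$.

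Finally I would read off (1), (3), (4) from $\lambda$ and deduce (5). Since $y$ has $m_1$ Jordan blocks, $\rank y=n-m_1$, the codimension of the largest eigenspace of $x$, giving (4) (and $\rank y=\rank x$ when $0$ has maximal multiplicity). Its largest block has size $\lambda_1=r$, so $y^{r-1}\ne0=y^r$; if $x$ is toral its eigenvalues lie in $\F_p$, whence $r\le p$ and $y^{[p]}=y^p=0$, giving (3). For (1) I would use the standard centralizer formula $\dim Z_{\gl_n}(y)=\sum_{j,l}\min(\lambda_j,\lambda_l)=\sum_k(\lambda'_k)^2=\sum_i m_i^2=\dim Z_{\gl_n}(x)$ (where $\lambda'=\mu$ is the conjugate of $\lambda$), so the orbits have equal dimension, the $\SL_n$ and $\GL_n$ orbit dimensions differing by the same constant for $x$ and for $y$. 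Property (5) is then immediate: $\dim V^{(\cdot)}$ is constant on $x^{\Gm G}$ and upper semicontinuous, so $\dim V^y\ge\dim V^x$ by (2), and adding the orbit-dimension equality from (1) yields $\dim y^G+\dim V^y\ge\dim x^G+\dim V^x$. The main obstacle is getting (2) honestly right: arranging that one scaling parameter degenerates every block at once and that the whole family genuinely stays inside the $\Gm G$-orbit rather than some larger set, together with the bookkeeping that identifies $\lambda$ as the conjugate of $\mu$, so that orbit dimension, rank, and nilpotency index all match simultaneously.
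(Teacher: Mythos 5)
Your proof is correct, and it reaches the paper's nilpotent element by a genuinely different degeneration. Both arguments land on the same class: the Jordan type of $y$ is the partition $\lambda$ conjugate to the multiplicity partition $(m_1,\dots,m_r)$ of $x$; both get (1) from the centralizer computation $\sum_{j,l}\min(\lambda_j,\lambda_l)=\sum_k(\lambda'_k)^2=\sum_i m_i^2$; and both deduce (5) from (1) together with upper semicontinuity of $\dim \ker$ along the closure of $x^{\Gm G}$, exactly as in the paper. Where you diverge is in how $y$ is exhibited inside $\overline{x^{\Gm G}}$: the paper keeps the eigenspace block decomposition, writes down a fixed nilpotent $y$ with identity blocks linking consecutive eigenspaces (which makes $\rank y=\sum_{i\ge 2}n_i$, i.e.\ (4), visible at a glance), and uses the linear pencil $x+ty\in x^G$, so that $y=\lim_{t\to\infty}t^{-1}(x+ty)$; you instead re-block $k^n$ into cyclic subspaces $U_j$ of dimension $\lambda_j$ on which $x$ is regular semisimple and run the polynomial curve of companion matrices $\tilde C(s)$ conjugate to $sC$. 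Your route makes membership of the curve in $x^{\Gm G}$ essentially automatic, since a matrix with distinct eigenvalues is conjugate to its companion matrix, whereas the paper's pencil requires the small verification (left as ``clearly'' there) that $x+ty$ is again conjugate to $x$; the cost is the extra combinatorial bookkeeping identifying the eigenvalue content of the $U_j$, which you carry out correctly ($a_i$ occurs $\lambda'_i=m_i$ times), and the fact that (4) is read off from the block count $m_1$ of $\lambda$ rather than from an explicit matrix. Your handling of $\SL_n$ --- using $\Ad(\GL_n)=\Ad(\SL_n)$ on $\gl_n$ over an algebraically closed field and noting that $y$ and the whole curve $\tilde C(s)$ are trace-free --- is if anything more explicit than the paper's one-line reduction to the $\GL_n$ case.
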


\begin{proof}
Suppose first that $G=\GL_n$.  We may assume that $x$ is diagonal. 
Permuting the basis so that vectors with the same eigenvalue are adjacent, we may assume that $x$ has $a_1, \ldots, a_r$  down the diagonal $a_i$ appearing $n_i$ times
and $n_1 \ge n_2 \ge \ldots \ge n_r$.
 The centralizer of $x$ in $\GL_n$ is $\prod_i \GL_{n_i}$ of dimension $\sum n_i^2$.

Let $y$ be the block upper triangular matrix (with the blocks corresponding to the
eigenspaces of $x$) such that the only nonzero blocks are the ones corresponding to
the $a_i, a_{i+1}$ block.  In that block, take $y$ to have $1$'s on the diagonal
and $0$'s elsewhere; this block has rank $n_{i+1}$ so $\rank y = \sum_{i=2}^r n_i$  as claimed in \eqref{deform.rank}.

After conjugation
by a permutation matrix, we  deduce that the size of the Jordan blocks in the Jordan form of $y$ are given by the partition of $n$ conjugate to $(n_1, n_2, \ldots, n_r)$.  The centralizer of such a matrix has dimension $\sum n_i^2$, cf.~\cite[p.~E-84, 1.7(iii); E-85, 1.8]{SpSt} or \cite[p.~14]{Hum:conj} and so \eqref{deform.1} holds.

It follows that the largest Jordan block of $y$ has size $r$ whence the minimal polynomial
of $y$ has degree $r$ (equal to the degree of the minimal polynomial of $x$). 

Clearly, $x + ty \in x^G$ whence $y$ is in the closure of $x^{\Gm G}$ and so
$\dim V^x \le \dim V^y$.  This fact and \eqref{deform.1} imply the last inequality in \eqref{deform.4}.

If $x$ is toral, then $x$ has all eigenvalues in $\F_p$ and so $r \le p$, whence
$y^{[p]}=0$. 
 
 For $G = \SL_n$, each toral element is also toral in $\GL_n$ and one takes $y$ as in the $\GL_n$ case.
\end{proof} 

\subsection*{Generation} 
\newcommand{\Xb}{\overline{X}}

\begin{lem} \label{gen}
Let $\rho \!: G \to \GL(V)$ be a representation of an algebraic group over a field $k$.
Let $X$ be an irreducible and $G$-invariant subset of $\g$ such that $X$ is open in $\Xb$.  If, for some $Y \subseteq \Xb$, there exist $e > 0$ and $y_1, \ldots, y_e \in Y(k)$ that generate a subalgebra of $\g$ 
\begin{enumerate}
\item \label{opengen.trivial}  that has dimension at least $d$, for some $d$;
\item \label{opengen.inv} that leaves no $d$-dimensional subspace of $V$ invariant for some $d$;
\item \label{opengen.1} containing a $G$-invariant subalgebra $M$ of $\g$ such that $M/N$ is an irreducible $M$-module and $\dim \g/M < \dim M/N$, for some $G$-submodule $N$ of $M$; or
\item \label{opengen.2} containing a strongly regular semisimple element (as defined in Example \ref{sr.eg}), \end{enumerate}
then $e$ generic elements of $X$ do so as well.
\end{lem}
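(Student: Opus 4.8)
The plan is to show that, for each of the four conditions, the set $\Omega\subseteq\Xb^{\,e}$ of tuples $(x_1,\dots,x_e)$ whose generated subalgebra $\s=\langle x_1,\dots,x_e\rangle$ satisfies that condition is \emph{open} in $\Xb^{\,e}$. Granting this, the lemma follows formally: $\Xb$ is irreducible (being the closure of the irreducible set $X$), so $\Xb^{\,e}$ is irreducible; the hypothesis places $(y_1,\dots,y_e)\in\Omega$, so $\Omega$ is a nonempty, hence dense, open subset of $\Xb^{\,e}$; and since $X$ is open in $\Xb$, the set $X^e$ is dense open in $\Xb^{\,e}$ and therefore meets $\Omega$. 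Thus $\Omega\cap X^e$ is a nonempty open subset of $X^e$, which is exactly the statement that $e$ generic elements of $X$ generate a subalgebra with the given property. The point of working in $\Xb^{\,e}$ rather than $X^e$ is precisely that the witnessing elements lie in $Y\subseteq\Xb$, possibly on the boundary.

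To make these loci algebraic, I would first reduce the generated subalgebra to bounded bracket length: as $\dim\g<\infty$, the subalgebra $\s$ is the linear span of the finitely many iterated bracket monomials $w_1(x),\dots,w_m(x)$ of length at most $\dim\g$, each a polynomial map $\Xb^{\,e}\to\g$. Hence $\s$ is the column space of a matrix $A(x)$ with polynomial entries, and the basic tool throughout is that $\rank A(x)$ is lower semicontinuous, so $\{\dim\s\ge d\}$ is open. Condition \eqref{opengen.trivial} is then immediate. For condition \eqref{opengen.inv} I would form the incidence variety $Z\subseteq\Xb^{\,e}\times\mathrm{Gr}(d,V)$ of pairs $(x,W)$ with $\drho(x_i)W\subseteq W$ for all $i$; a subspace is $\s$-invariant precisely when it is invariant under the generators, so the image of $Z$ in $\Xb^{\,e}$ is the locus where $\s$ leaves some $d$-dimensional subspace invariant. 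Since $Z$ is closed and $\mathrm{Gr}(d,V)$ is complete, this image is closed, and its complement — condition \eqref{opengen.inv} — is open.

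Condition \eqref{opengen.2} I would treat directly. Let $Z\subseteq\g$ be the complement of the open set of strongly regular semisimple elements; it is closed. Then $\s$ contains a strongly regular semisimple element if and only if $\s\not\subseteq Z$, and $\{x:\s\subseteq Z\}=\bigcap_{c}\{x:\textstyle\sum_i c_iw_i(x)\in Z\}$ is an intersection of closed sets (one for each coefficient vector $c$), hence closed, so its complement is the required open locus.

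The delicate case is \eqref{opengen.1}, because $\{\s\supseteq M\}$ is only \emph{constructible}: containment of the fixed subspace $M$ is controlled by differences of ranks, which need not be semicontinuous in either direction. Here the extra hypotheses are exactly what is needed. As $M$ is $G$-invariant it is an ideal of $\g$, and $N\subseteq M$ is an ideal as well, so every subalgebra $\s$ acts on $\g/N$ by the adjoint action and $M/N$ is a submodule. The idea is to convert \eqref{opengen.1} into a \eqref{opengen.inv}-type statement for the auxiliary module $\g/N$: if $M\subseteq\s$, then for any $d$ with $\dim\g/M<d<\dim M/N$ the algebra $\s$ leaves no $d$-dimensional subspace $W$ of $\g/N$ invariant, since $W\cap(M/N)$ is an $\s$-submodule of the $M$-irreducible module $M/N$ and so is $0$ or all of it, forcing $\dim W\le\dim\g/M$ or $\dim W\ge\dim M/N$. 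By the openness already established for \eqref{opengen.inv} (applied with $V$ replaced by $\g/N$), this non-invariance passes to generic tuples. The remaining step — recovering the literal containment $M\subseteq\s$ from the non-invariance, by combining the lower bound $\dim(\s\cap M)\ge\dim\s-\dim\g/M$ from \eqref{opengen.trivial} with the irreducibility of $M/N$ to pin the $\s$-invariant subspace $(\s\cap M+N)/N$ down to all of $M/N$ — is where the inequality $\dim\g/M<\dim M/N$ is indispensable, and I expect it to be the main obstacle of the proof.
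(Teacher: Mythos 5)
Your overall framework (openness of the relevant locus in $\overline{X}^{\times e}$, irreducibility of $\overline{X}^{\times e}$, nonemptiness supplied by the $Y$-tuple, density of $X^{\times e}$) is exactly the paper's, and your treatments of \eqref{opengen.trivial}, \eqref{opengen.inv} and \eqref{opengen.2} are correct and match the paper's proof: rank semicontinuity of the matrix of bracket words for \eqref{opengen.trivial}; the incidence variety over the Grassmannian together with completeness for \eqref{opengen.inv}, which is the argument of \cite[Lemma 3.6]{BGGT} that the paper cites; and for \eqref{opengen.2} your intersection over all coefficient vectors is a complete written-out form of the paper's one-line appeal to openness of strong regularity. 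The problem is \eqref{opengen.1}, which you explicitly leave unfinished, and the route you sketch would not close it. Your open locus (no invariant subspace of $\g/N$ of intermediate dimension, plus the dimension lower bound) is in substance the paper's auxiliary set $U'$ of tuples generating a $Q$ acting irreducibly on $M/N$ with $\dim Q \ge \dim M$ (the paper's version also avoids a small edge case of yours: an integer $d$ strictly between $\dim \g/M$ and $\dim M/N$ exists only when $\dim M/N \ge \dim\g/M + 2$). The entire content of case \eqref{opengen.1} is the implication ``$U'$ forces $M \subseteq Q$,'' and that is precisely the step you defer.

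Concretely: irreducibility of the $Q$-action on $M/N$, applied to the $Q$-submodule $(Q\cap M + N)/N$, gives only a dichotomy: either $Q \cap M \subseteq N$ or $Q\cap M + N = M$. The inequality $\dim \g/M < \dim M/N$ is what eliminates the \emph{first} branch --- from $Q\cap M \subseteq N$ one gets $\dim Q \le \dim\g/M + \dim N < \dim M$, contradicting $\dim Q \ge \dim M$; this dimension count is how the paper concludes. But your plan of ``pinning $(\s\cap M+N)/N$ down to all of $M/N$'' lands you in the \emph{second} branch, which yields only $\s + N \supseteq M$, strictly weaker than $M \subseteq \s$ when $N \not\subseteq \s$. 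No argument from your open conditions alone can bridge this: in $\g = \gl_2 \oplus kt$ with $t$ central ($\car k = 0$, $G = \GL_2 \times \Gm$), the subalgebra $Q = \sl_2 \oplus kt$ satisfies every condition defining $U'$ for $M = \gl_2$, $N = kI$ --- it has $\dim Q = \dim M$ and acts irreducibly on $M/N \cong \sl_2$ --- yet $I \notin Q$, so $M \not\subseteq Q$. (The paper's own proof is terse at this very point: it asserts that a tuple in $U' \setminus U$ has $Q \cap M \subseteq N$, i.e., it discards the second branch without comment. In all of the paper's applications one has $[N,M]=0$ and $N \subseteq [M,M]$, and then the second branch does terminate: $M = Q\cap M + N$ gives $[M,M] = [Q\cap M, Q\cap M] \subseteq Q$, hence $N \subseteq Q$ and $M \subseteq Q$.) As it stands, your case \eqref{opengen.1} has a genuine gap, and the deferred step is not a routine verification but the crux of the lemma.
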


We will use this lemma with $X = x^G$ and $Y = y^G$ for $x$ and $y$.  For a description of which nilpotent $y$ lie in $\overline{x^G}$ for a given $x$, we refer to \cite[3.10]{Hesselink:sing} for type $A$ and, when $\car k \ne 2$, types $B$, $C$, and $D$.  (A description can also be found in \cite[\S6.2]{CMcG}.)  For the other cases we use Lemma \ref{root.2}.

Alternatively, one can take $x$ and $y$
as in Example \ref{deform.eg} or Lemma \ref{deform} and set $X = x^{\Gm G}$ and $Y = y^G$. 

\begin{proof}[Proof of Lemma \ref{gen}]
For each of \eqref{opengen.trivial}--\eqref{opengen.2}, we consider the subset $U$ consisting of those $(y_1, \ldots, y_e)$ in a product $\Xb^{\times e}$ of $e$ copies of $\Xb$ that generate a subalgebra satisfying the given condition.  Fix an $e > 0$ so that $U(k)$ is nonempty.  It suffices to observe that $U$ is open in $\Xb$, which is obvious for \eqref{opengen.trivial}.  Case \eqref{opengen.inv} is argued as in \cite[Lemma 3.6]{BGGT}.

For \eqref{opengen.1}, consider the set $U'$ of $(y_1, \ldots, y_e) \in \Xb^{\times e}$ such that $y_1, \ldots y_e$ generate a subalgebra $Q$ with $Q$ acting irreducibly on $M/N$ and $\dim Q \ge \dim M$; it is open as in \eqref{opengen.trivial} and \eqref{opengen.inv}.  We claim that $U' = U$; the containment $\supseteq$ is clear.  Conversely, if $(y_1, \ldots, y_e)$ is in $U' \setminus U$, then $Q \cap M \subseteq N$ and $\dim Q \le \dim \g/M + \dim N < \dim M$, a contradiction.  

For \eqref{opengen.2}, the hypothesis is that some word $w$ in variables is strongly regular semisimple for some collection of $e$ elements of $Y(k)$.   Since being strongly regular semisimple is an open condition, it follows that $w$ is generically strongly regular semisimple.
\end{proof}

We also use the lemma in the form of the following corollary.

\begin{cor} \label{gen.cor}
Let $G$ be a simple algebraic group over a field $k$ such that $\car k$ is not special for $G$ and $(\type G, \car k) \ne (A_1, 2)$.
Let $X$ be an irreducible and $G$-invariant subset of $\g$ such that $X$ is open in $\Xb$.  If, for some $Y \subseteq \Xb$, there exist $e > 0$ and $y_1, \ldots, y_e \in Y(k)$ that generate a subalgebra of $\g$ containing $[\g, \g]$,  then $e$ generic elements of $X$ do so as well.
\end{cor}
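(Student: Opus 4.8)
The plan is to deduce Corollary~\ref{gen.cor} from Lemma~\ref{gen} by choosing the right property from among \eqref{opengen.trivial}--\eqref{opengen.2} to apply. The hypothesis supplies $y_1,\ldots,y_e \in Y(k)$ generating a subalgebra $\s \subseteq \g$ with $\s \supseteq [\g,\g]$, and the conclusion I want is that $e$ generic elements of $X$ generate a subalgebra also containing $[\g,\g]$. The obvious difficulty is that ``contains $[\g,\g]$'' is not literally one of the four listed conditions, so I must show it is an \emph{open} condition on $(y_1,\ldots,y_e) \in \Xb^{\times e}$, or else encode it via one of the four. The natural route is to invoke \eqref{opengen.1} with $M = [\g,\g]$.

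\smallskip
\textbf{Applying condition \eqref{opengen.1}.} First I would set $M := [\g,\g]$, which is $G$-invariant. Under the running hypotheses --- $\car k$ not special and $(\type G, \car k) \ne (A_1,2)$ --- Lemma~\ref{quo}\eqref{quo.der} identifies $[\g,\g]$ with $\dpi(\gt)$, and the standard structure theory gives that $[\g,\g]/\z([\g,\g])$ is an irreducible $[\g,\g]$-module (this is where the exclusion of the special primes and of $(A_1,2)$ is needed, exactly as in the hypotheses of Lemma~\ref{quo.gen}). So I take $N := \z(\g) \cap [\g,\g]$, a $G$-submodule of $M$, and $M/N$ is then irreducible as an $M$-module. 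The dimension inequality $\dim \g/M < \dim M/N$ required by \eqref{opengen.1} holds because $\g/[\g,\g]$ and $\z(\g)$ are at most one-dimensional for a simple group outside the special cases, whereas $M/N$ has dimension comparable to $\dim G$; I would verify this numerically using the explicit description of $\z(\g)$ and $\g/[\g,\g]$.

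\smallskip
\textbf{Closing the argument.} With $M$ and $N$ so chosen, Lemma~\ref{gen}\eqref{opengen.1} asserts that if the given $y_1,\ldots,y_e$ generate a subalgebra $\s$ containing $M = [\g,\g]$, then $e$ generic elements of $X$ generate a subalgebra $Q$ containing $M$ as well --- precisely the desired conclusion. It remains only to check that the hypothesis $\s \supseteq [\g,\g]$ does supply a subalgebra satisfying the premise of \eqref{opengen.1}, namely that $\s$ contains the $G$-invariant subalgebra $M$ with $M/N$ irreducible; this is immediate once $M = [\g,\g]$ and $N = \z(\g) \cap [\g,\g]$ are fixed as above. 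Applying the lemma with these choices then gives the corollary directly.

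\smallskip
\textbf{The main obstacle.} The genuinely delicate point is confirming that $M/N = [\g,\g]/(\z(\g)\cap[\g,\g])$ is an \emph{irreducible} $[\g,\g]$-module under the stated hypotheses; this is the only place the exclusion of special characteristic and of $(A_1,2)$ is truly essential, and it is exactly the content underlying Lemma~\ref{quo.gen}. I would lean on that structure, together with the dimension count $\dim \g/M < \dim M/N$, rather than re-proving irreducibility from scratch. Everything else is bookkeeping in translating ``$\s \supseteq [\g,\g]$'' into the language of condition \eqref{opengen.1}.
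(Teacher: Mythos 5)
Your proposal is correct and follows essentially the same route as the paper's proof, which likewise sets $M := [\g,\g] = \dpi(\gt)$ and $N := [\g,\g] \cap \z(\g) = \dpi(\z(\gt))$, notes that $M/N$ is the irreducible module $L(\at)$ (citing Example \ref{adjoint} rather than your appeal to general structure theory), checks $\dim \g/M \le \dim \z(\gt) \le 2 < \dim M/N$, and applies Lemma \ref{gen}\eqref{opengen.1}. One small correction: $\z(\gt)$, and hence $\g/[\g,\g]$, can be \emph{two}-dimensional under the stated hypotheses (e.g.\ type $D_{2m}$ in characteristic $2$, which is not special), so your parenthetical ``at most one-dimensional'' should read ``at most two-dimensional'' --- harmless here, since only $\dim \g/M < \dim M/N$ is needed.
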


\begin{proof}
Set $M := \dpi(\gt) = [\g, \g]$ (Lemma \ref{quo}\eqref{quo.der}) and $N := \dpi(\z(\gt)) = [\g, \g] \cap \z(\g)$.  Then $M/N$ is, as a $G$-module, $L(\at)$, an irreducible representation of $M$ (Example \ref{adjoint}).  Moreover, $\dim \g/M \le \dim \z(\gt) \le 2 < \dim M/N$.  Apply  Lemma \ref{gen}\eqref{opengen.1}.
\end{proof}

%%%%%%%%%%%%%%%%%%%%%%%%%%%%%%%%%%%%%%%%%%%%%%%%%%%%%%%%%%%%%%%%%%%%%
\section{Quasi-regular subalgebras}

For this section, let $T$ be a maximal torus in a reductive algebraic group $G$ over an algebraically closed field $k$.  Writing $\tor := \Lie(T)$ and $\g := \Lie(G)$, the action of $T$ on $\g$ gives the Cartan decomposition $\g = \tor \oplus \bigoplus_{\alpha \in \Phi} \g_\alpha$ where $\Phi$ is the set of roots of $G$ with respect to $T$ and $\g_\alpha$ is the 1-dimensional root subalgebra for the root $\alpha$.  (Note that the action by $\tor$ induces a direct sum decomposition on $\g$ that need not be as fine when $\car k = 2$, for in that case $\alpha$ and $-\alpha$ agree on $\tor$, and if furthermore $G = \Sp_{2n}$ for $n \ge 1$, then the centralizer of $\tor$ in $\g$, the Cartan subalgebra, properly contains $\tor$.)  We say that a subalgebra $L$ of $\g$ is \emph{quasi-regular with respect to $T$} if 
\[
L = (L \cap \tor) \oplus \begin{cases}
\oplus_{\alpha \in \Phi} (L \cap \g_\alpha) & \text{if $\car k \ne 2$} \\
\oplus_{\alpha \in \Phi^+} (L \cap \g_{\pm\alpha}) & \text{if $\car k = 2$} 
\end{cases}
\]
as a vector space,
where $\g_{\pm \alpha} := \g_\alpha \oplus \g_{-\alpha}$ and $\Phi^+$ denotes the set of positive roots relative to some fixed ordering.  We say simply that $L$ is quasi-regular if it is quasi-regular with respect to some torus $T$.

For $L$ quasi-regular with respect to $T$, $\tor$ evidently normalizes $L$, i.e., $L + \tor$ is also a quasi-regular subalgebra.

\begin{eg} \label{sr.eg}
Suppose there is a $t \in \tor \cap L$ such that 
\begin{equation} \label{sr}
\pm \alpha(t) \ne \pm \beta(t) \quad \text{for all $\alpha \ne \beta \in \Phi^+ \cup \{ 0 \}$,}
\end{equation}
i.e., that has the same eigenspaces on $\g$ as $\tor$.  (We call such a $t$ \emph{strongly regular}.)
Put $m(x)$ for the minimal polynomial of $\ad(t)$.  For each $\alpha \in \Phi \cup \{ 0 \}$, evaluating $m(x)/(x-\alpha(t))$ at $\ad(t)$ gives a linear map $\g \to \g$ with image $\g_\alpha$ (if $\car k \ne 2$) or $\g_{\pm \alpha}$ (if $\car k = 2$).  Restricting $t$ to $L$ shows that $L \cap \g_\alpha$ or $L \cap \g_{\pm \alpha}$ is contained in $L$, i.e., \emph{$L$ is quasi-regular.}
\end{eg}

\begin{eg} \label{sln.qr}
Suppose $G = \SL_n$ or $\GL_n$ for $n \ge 4$.  If $L$ contains a copy of $\sl_{n-1}$ (say, the matrices with zeros along the rightmost column and bottom row), then $L$ is quasi-regular.  Indeed, taking $T$ to be the diagonal matrices in $G$ and $t \in \tor$ to have distinct indeterminates in the first $n-2$ diagonal entries and a zero in the last diagonal entry, we find that $t$ satisfies \eqref{sr}.  This $L$ is quasi-regular, but need not be regular, in the sense that it need not contain a maximal toral subalgebra of $\g$.
\end{eg}

\begin{rmk} \label{maxrank}
Suppose $\car k \ne 2$ and $\g = \gl_n$, $\sl_n$, $\so_n$, or $\sp_{2n}$.  \emph{If $\lsub$ is a Lie subalgebra that contains a maximal toral subalgebra $\tor$} (so $\lsub$ is quasi-regular) \emph{and acts irreducibly on the natural module, then $\lsub = \g$.}  To see this, note that $\lsub$ is a sum of $\tor$ and the root spaces it contains (using the $\car k \ne 2$), and so is determined by $\tor$ and a closed subset of the root system of $\g$, whose classification over $k$ is the same as the Borel-de Siebenthal classification over $\C$.  

The claim is clear if $\lsub$ is contained in a maximal parabolic subalgebra, for such subalgebras act reducibly (even stabilizing a totally singular subspace for $\g = \so_n$ or $\sp_{2n}$), see for example  \cite[\S3]{CG}.  Otherwise, $\lsub$ stabilizes a nondegenerate subspace (compare for example \cite[Table 9]{Dynk:ssub}) and again the claim follows.  See \cite[Lemma 3.6]{BGGT} for a similar statement on the level of groups.
\end{rmk}

\subsection*{The subsystem subalgebra} Suppose $L$ is a quasi-regular subalgebra of $\g$ with respect to $T$.  Define $L_0$ to be the subalgebra of $L$ generated by the $L \cap \g_\alpha$ for $\alpha \in \Phi$.

\begin{lem} \label{L0.ideal}
If
\begin{enumerate}
\item $\car k \ne 2$ or
\item \label{L0.ideal.2} $\car k = 2$, $\Phi$ is irreducible, and all roots have the same length,
\end{enumerate}
then $L_0$ is an ideal in $L + \tor$.
\end{lem}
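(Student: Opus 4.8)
The plan is to exploit the $\Z\Phi$-grading of $\g$ afforded by a Chevalley basis $\{x_\alpha\}_{\alpha\in\Phi}\cup\{h_\alpha\}$, which is defined over $\Z$ and hence available in every characteristic: in $\g=\tor\oplus\bigoplus_{\alpha\in\Phi}\g_\alpha$ one has $[\g_\alpha,\g_\beta]\subseteq\g_{\alpha+\beta}$ when $\alpha+\beta\in\Phi$, $[\g_\alpha,\g_{-\alpha}]=k\,h_\alpha\subseteq\tor$, $[\tor,\g_\alpha]\subseteq\g_\alpha$, and all remaining brackets of graded pieces vanish. Put $S:=\{\alpha\in\Phi:\g_\alpha\subseteq L\}$. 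Since $L_0$ is generated by the homogeneous one-dimensional spaces $\g_\alpha$, $\alpha\in S$, it is a graded subalgebra of $L$, and one-dimensionality forces $L_0=\tor_0\oplus\bigoplus_{\alpha\in S}\g_\alpha$ with $\tor_0:=L_0\cap\tor\subseteq\mathrm{span}_k\{h_\delta:\delta,-\delta\in S\}$. Because $\tor$ is abelian and normalizes $L$, the space $L+\tor$ is spanned by $\tor$ together with the spaces $L\cap\g_\alpha$ (if $\car k\ne2$) or $L\cap\g_{\pm\alpha}$ (if $\car k=2$); so it suffices to prove $[\tor,L_0]\subseteq L_0$ and $[L\cap\g_\alpha,L_0]\subseteq L_0$, resp.\ $[L\cap\g_{\pm\alpha},L_0]\subseteq L_0$, for every root $\alpha$. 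The first inclusion is immediate, since $\tor$ preserves each root space, kills $\tor_0$, and $L_0$ is a sum of root spaces and $\tor_0$.

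When $\car k\ne2$ the rest is formal: $\g_\alpha$ and $\g_{-\alpha}$ are then distinct $\tor$-weight spaces, so each $L\cap\g_\alpha$ is either $0$ or $\g_\alpha$ with $\alpha\in S$, whence $L\cap\g_\alpha\subseteq L_0$; as $L_0$ is a subalgebra, $[L\cap\g_\alpha,L_0]\subseteq L_0$. This disposes of case (1).

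In characteristic $2$, $\tor$ no longer separates $\alpha$ from $-\alpha$, and the new feature is that $L\cap\g_{\pm\alpha}$ can be a \emph{diagonal} line $k\,v$ with $v=c\,x_\alpha+d\,x_{-\alpha}$, $cd\ne0$ and $\pm\alpha\notin S$. For every non-diagonal direction one still has $L\cap\g_{\pm\alpha}\subseteq L_0$ and concludes as above, so the whole case reduces to showing $[v,L_0]\subseteq L_0$ for the diagonal $v$, which I would split into $[v,\g_\beta]$ ($\beta\in S$) and $[v,\tor_0]$. For $[v,x_\beta]=c\,N_{\alpha,\beta}\,x_{\alpha+\beta}+d\,N_{-\alpha,\beta}\,x_{\beta-\alpha}\in L$, the hypothesis $\beta\in S$ gives $\beta\ne\pm\alpha$, so the two terms lie in distinct $\tor$-weight spaces and quasi-regularity of $L$ puts each of them separately in $L$. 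This is where equal root lengths enter decisively: then $N_{\pm\alpha,\beta}\in\{0,\pm1\}$, hence is a unit mod $2$ precisely when the relevant sum is a root, so each surviving term forces its root into $S$ and its root space into $L_0$; thus $[v,\g_\beta]\subseteq L_0$.

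The delicate point---which I expect to be the main obstacle---is $[v,\tor_0]$. In characteristic $2$ one has $[v,t]=\alpha(t)\,v$ for $t\in\tor$, and since $v\notin L_0$ this lands in $L_0$ only if $\alpha(t)=0$; so I must establish the Claim that $\alpha(h_\delta)=\qform{\alpha,\delta^\vee}\equiv0\pmod2$ for all $\delta$ with $\delta,-\delta\in S$. I would prove it by contradiction: if $\qform{\alpha,\delta^\vee}$ is odd then, the roots having equal length, $\qform{\alpha,\delta^\vee}=\pm1$; taking the value $-1$, the short root strings give $\alpha+\delta\in\Phi$ while $\delta-\alpha\notin\Phi$, so the $\tor$-weight component of $[v,x_\delta]\in L$ is the nonzero multiple $c\,N_{\alpha,\delta}\,x_{\alpha+\delta}$, forcing $\alpha+\delta\in S$; then $[x_{\alpha+\delta},x_{-\delta}]$ is a nonzero multiple of $x_\alpha$ lying in $L_0$ (both factors are in $L_0$), so $\alpha\in S$, contradicting that $v$ is diagonal. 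The value $+1$ is handled by replacing $\delta$ with $-\delta$. Hence $[v,\tor_0]=0$, finishing case (2). It is precisely this toral interaction along diagonal directions that requires both $\car k=2$ and equal root lengths (with irreducibility pinning down types $A$, $D$, $E$); the remainder is bookkeeping with the grading.
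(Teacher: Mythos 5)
Your proof is correct, but it diverges from the paper's exactly at the point you single out as the main obstacle, and the comparison is instructive. The paper verifies the ideal condition only on the Lie-algebra \emph{generators} of $L_0$: for fixed $v \in L + \tor$, the set $W := \{ w \in L_0 \mid [v,w] \in L_0 \}$ is a subalgebra of $L_0$, since for $w_1, w_2 \in W$ the Jacobi identity gives $[v,[w_1,w_2]] = [[v,w_1],w_2] + [w_1,[v,w_2]] \in [L_0,L_0] + [L_0, L_0] \subseteq L_0$; once the displayed bracket with each $x_\alpha \in L \cap \g_\alpha$ is checked, $W = L_0$, and in particular $[v, \tor_0] \subseteq L_0$ comes for free because $\tor_0$ is spanned by brackets $[x_\delta, x_{-\delta}]$ with $\pm\delta \in S$. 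You instead verify the condition on a vector-space spanning set of $L_0$, which is also legitimate but forces you to prove the extra Claim that $\langle \alpha, \delta^\vee \rangle \equiv 0 \bmod 2$ whenever $L$ meets $\g_{\pm\alpha}$ in a diagonal line and $\pm\delta \in S$. Your contradiction argument for the Claim is correct --- in the simply laced case the relevant structure constants are $\pm 1$, hence units mod $2$, so $\alpha + \delta$ is forced into $S$ and then $\alpha \in S$ via $[x_{\alpha+\delta}, x_{-\delta}]$ --- and it yields a genuinely stronger conclusion ($[v,\tor_0] = 0$, not merely $\subseteq L_0$), but it is work the paper's generator reduction renders unnecessary. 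A second, smaller divergence: for $[v, x_\beta]$ the paper uses that $\alpha+\beta$ and $\beta - \alpha$ cannot both be roots in a simply laced system, so the bracket is a single homogeneous term landing in some $L \cap \g_\gamma \subseteq L_0$; you instead separate the two components via the quasi-regular grading of $L$ itself (valid, since $\g_{\pm(\alpha+\beta)} \ne \g_{\pm(\beta-\alpha)}$) and use the unit structure constants, which is marginally more robust but again longer. Incidentally, like the paper's argument, yours nowhere uses irreducibility of $\Phi$; equal root lengths is all that is consumed.
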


\begin{proof}
If $\car k \ne 2$, then $L_0 \cap \g_\alpha = L \cap \g_\alpha$ for all $\alpha$ and the claim is trivial, so assume \eqref{L0.ideal.2} holds.
As $L_0$ is evidently stable under $\ad \tor$, it suffices to check that, for $x_\beta \in \g_\beta$, $x_{-\beta} \in \g_{-\beta}$, $c \in k$ such that $x_\beta + cx_{-\beta} \in L$, and $x_\alpha \in L \cap \g_\alpha \subseteq L_0$ that $L_0$ contains
\[
[x_\beta + c x_{-\beta}, x_\alpha] = [x_\beta, x_\alpha] + c[x_{-\beta}, x_\alpha] .
\]
However, by hypothesis $\alpha + \beta$ and $\alpha - \beta$ cannot both be roots, so at least one of the two terms in the displayed sum is zero and the expression belongs to $L \cap \g_{\alpha + \beta}$ or $L \cap \g_{\alpha - \beta}$, hence to $L_0$.
\end{proof}

\begin{eg}
Let $L$ be the space of
symmetric $n$-by-$n$ matrices in $\gl_n$.  It is a Lie subalgebra when $\car k = 2$, and, in that case, it is quasi-regular with respect to the maximal torus $T$ of diagonal matrices in $\GL_n$ and $L_0 = 0$.
\end{eg}

\begin{lem} \label{L0.subsystem}
Suppose $L$ is a quasi-regular subalgebra of $\gl(V)$ with respect to a maximal torus $T$.  Then $L_0$ is irreducible on $V$ if and only if $L_0 + \tor$ is irreducible on $V$ if and only if $L_0 = \sl(V)$, if and only if $L_0 + \tor = \gl(V)$.
\end{lem}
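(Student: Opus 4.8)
The plan is to make everything concrete. Fix a basis of $V$ diagonalizing $T$, so that $\gl(V)=\gl_n$ with $n=\dim V$, the torus $\tor$ is the full diagonal, the roots are the pairs $(i,j)$ with $i\ne j$, and the root subalgebra $\g_\alpha$ attached to $(i,j)$ is the line $kE_{ij}$ spanned by the matrix unit. Since each $\g_\alpha$ is one-dimensional, $L\cap\g_\alpha$ is either $0$ or all of $kE_{ij}$, so $L_0$ is the Lie subalgebra generated by a set of matrix units $\{E_{ij}:(i,j)\in S\}$, where $S$ records which root lines $L$ contains. (In characteristic $2$ the torus no longer separates $\alpha$ from $-\alpha$ on $\g$, but this is harmless: the lines $ke_1,\dots,ke_n$ remain the distinct $T$-weight spaces on $V$, and $L_0$ is still spanned by individual matrix units.)

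First I would pin down the structure of $L_0$ from the relations $[E_{ij},E_{jl}]=E_{il}$ (for $i\ne l$), $[E_{ij},E_{ji}]=E_{ii}-E_{jj}$, and $[E_{ij},E_{kl}]=0$ when $j\ne k$ and $i\ne l$. Encoding $S$ as a directed graph $\Gamma$ on $\{1,\dots,n\}$, one checks that the span of $\{E_{ij}:(i,j)\in\overline{S}\}$ together with $\{E_{ii}-E_{jj}: i,j \text{ in a common strongly connected component}\}$ is already closed under brackets and contains the generators, where $\overline{S}$ is the reachability (transitive-closure) relation of $\Gamma$; hence this span \emph{is} $L_0$. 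In particular $L_0\subseteq\sl(V)$ always (these matrix units and diagonal elements are traceless, and $[\gl(V),\gl(V)]\subseteq\sl(V)$), and $L_0=\sl(V)$ if and only if $\Gamma$ is strongly connected on all $n$ vertices — call this condition $(\star)$ — since $(\star)$ is exactly what forces every $E_{ij}$ and every $E_{ii}-E_{jj}$ into $L_0$, and these span $\sl(V)$ in every characteristic (even when $\car k$ divides $n$).

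Next I would identify the invariant subspaces. A coordinate subspace $W_I:=\langle e_i:i\in I\rangle$ is $L_0$-invariant precisely when $I$ is closed under predecessors in $\Gamma$ (if $j\in I$ and $(i,j)\in\overline{S}$ then $i\in I$), equivalently $I$ is a union of strongly connected components forming a down-set of the reachability poset $P$ of components. Since $\tor$ acts on $V$ with the $n$ distinct weights $e_i\mapsto t_ie_i$, every $(L_0+\tor)$-invariant subspace is a sum of weight lines, hence a coordinate subspace; so the $(L_0+\tor)$-submodules are exactly the $W_I$ for down-sets $I$. The poset $P$ admits a proper nonempty down-set if and only if $|P|\ge 2$ (delete a $\preceq$-maximal component), i.e.\ if and only if $(\star)$ fails. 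This yields at once that $L_0+\tor$ is irreducible $\iff(\star)\iff L_0+\tor=\gl(V)$, the last equivalence because $L_0+\tor=\gl(V)$ forces every off-diagonal $E_{ij}$ into $L_0$, i.e.\ $\overline{S}$ complete. (In characteristic $\ne 2$ the equivalence ``$L_0+\tor$ irreducible $\Rightarrow L_0+\tor=\gl(V)$'' also drops out of Remark~\ref{maxrank}, but the weight argument handles all characteristics uniformly.)

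Finally I would assemble the four-way equivalence around $(\star)$. The implication $L_0=\sl(V)\Rightarrow L_0$ irreducible is the standard direct check that the natural module of $\sl_n$ is simple, valid in every characteristic. For the converse, if $(\star)$ fails then $P$ has a proper nonempty down-set $I$, and $W_I$ is a proper nonzero $L_0$-invariant subspace, so $L_0$ is reducible; hence $L_0$ irreducible $\Rightarrow(\star)\Rightarrow L_0=\sl(V)$. Combined with the previous paragraph, all four statements are equivalent to $(\star)$. I expect the main obstacle to be the first, structural step — establishing exactly which matrix units and which diagonal elements lie in $L_0$, and verifying that the characteristic-$2$ coarsening of the $T$-action on $\g$ affects neither this description nor the distinctness of the weight lines $ke_i$ on $V$; once that is in hand, everything downstream is a clean translation into the combinatorics of $\Gamma$.
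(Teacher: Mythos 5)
Your proposal is correct and follows essentially the same route as the paper: the paper's proof also identifies $L_0$ as $(L_0\cap\tor)\oplus\bigoplus_{\alpha\in S}\g_\alpha$ for $S$ a closed subset of the type-$A$ root system, notes that $S=\Phi$ gives $L_0=\sl(V)$ acting irreducibly, and that a proper closed $S$ normalizes a proper $T$-invariant (coordinate) subspace. Your directed-graph formulation with transitive closure and strongly connected components is just an explicit, characteristic-uniform unwinding of what ``closed subset of a type-$A$ system'' means, supplying the details (including the characteristic-$2$ check that the weight lines $ke_i$ stay distinct) that the paper's two-sentence proof leaves implicit.
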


\begin{proof}
The algebra $L_0$ is $(L_0 \cap \tor) \oplus \bigoplus_{\alpha \in S} \g_\alpha$ where $S$ is a closed subsystem of a root system of type $A$.  Therefore $S = \Phi$ (in which case $L_0$ acts irreducibly and $L_0 = \sl(V)$) or $S$ is contained in a proper subsystem (which normalizes a proper $T$-invariant subspace of $V$).
\end{proof}

\subsection*{Application to type $A$}

\begin{thm} \label{qr.thm}
Suppose $L$ is a subalgebra of $\gl_n$ for some $n \ge 2$ that is quasi-regular and acts irreducibly on the natural representation of $\gl_n$.  Then 
\begin{enumerate}
\item \label{qr.sl} $L$ contains $\sl_n$, or
\item \label{qr.symm} $\car k = 2$ and $L$ is $\GL_n$-conjugate to a subalgebra of symmetric $n$-by-$n$ matrices containing the alternating matrices.
\end{enumerate}
\end{thm}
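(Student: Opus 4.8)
The plan is to dispose of $\car k \ne 2$ in one stroke and to concentrate the work on $\car k = 2$, where case~\eqref{qr.symm} actually arises. When $\car k \ne 2$: since $L$ is quasi-regular so is $L + \tor$, which now contains the maximal toral subalgebra $\tor$ and still acts irreducibly on the natural module (because $L$ does). By Remark~\ref{maxrank} this forces $L + \tor = \gl_n$; comparing the two root-space decompositions then gives $L \cap \g_\alpha = \g_\alpha$ for every root $\alpha$, and since the root subalgebras generate $\sl_n$ we land in case~\eqref{qr.sl}.

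So suppose $\car k = 2$. The root system of type $A$ is irreducible with all roots of one length, so Lemma~\ref{L0.ideal} applies and $L_0$ is an ideal of $L + \tor$. By Lemma~\ref{L0.subsystem}, either $L_0$ acts irreducibly, whence $L_0 = \sl_n \subseteq L$ and we are in case~\eqref{qr.sl}, or $L_0$ acts reducibly; the crux is to show the latter forces $L_0 = 0$. Write $L_0 = (L_0 \cap \tor) \oplus \bigoplus_{\alpha \in S} \g_\alpha$ with $S \subsetneq \Phi$ a closed subsystem; it corresponds to a partition of $\{1,\dots,n\}$ into blocks, giving an $L_0$-stable decomposition $k^n = \bigoplus_B V_B$ with $\sl(V_B) \subseteq L_0$ on each block. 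Between two distinct blocks the quasi-regular piece $L \cap \g_{\pm\alpha_{ij}}$ can contain no \emph{pure} root vector, since such a vector would already lie in $L_0$ and place $i,j$ in the same block; hence it is either $0$ or a genuine diagonal line $\langle E_{ij} + cE_{ji}\rangle$ with $c \ne 0$.

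I expect the decisive step to be ruling out blocks of size $\ge 2$. If a block $B$ with $|B| \ge 2$ were joined to another block $B'$ by a diagonal line $E_{ij} + cE_{ji}$ (with $i \in B$, $j \in B'$), then bracketing it with $E_{ki} \in \sl(V_B) \subseteq L$ for some $k \in B \setminus \{i\}$ produces the pure root vector $E_{kj}$ linking $B$ to $B'$, contradicting the previous paragraph. Thus a block of size $\ge 2$ connects to nothing, so its $V_B$ is a proper $L$-invariant subspace, contradicting irreducibility. Hence every block is a singleton and $L_0 = 0$.

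With $L_0 = 0$, every off-diagonal part of $L$ is $0$ or a diagonal line $\langle E_{ij} + c_{ij}E_{ji}\rangle$ with $c_{ij} \ne 0$. The identity $[E_{ij} + c_{ij}E_{ji},\, E_{jk} + c_{jk}E_{kj}] = E_{ik} + c_{ij}c_{jk}E_{ki}$ shows that the graph supporting these lines has complete connected components with multiplicatively compatible coefficients $c_{ik} = c_{ij}c_{jk}$, and irreducibility makes it the complete graph on $\{1,\dots,n\}$. Since $k$ is algebraically closed of characteristic $2$, each $c_{ij}$ has a unique square root and the compatibility lets me choose a diagonal $D = \mathrm{diag}(d_1,\dots,d_n)$ with $(d_i/d_j)^2 = c_{ij}$; conjugation by $D$ sends every line to $\langle E_{ij} + E_{ji}\rangle$. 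Then $L$ becomes $(L \cap \tor) \oplus \langle E_{ij} + E_{ji} : i < j\rangle$, which contains the alternating matrices and is contained in the symmetric matrices, giving case~\eqref{qr.symm}. The genuinely delicate points, both special to characteristic $2$, are the pure-root-vector contradiction that eliminates large blocks and the splitting of the coefficient cocycle by square roots.
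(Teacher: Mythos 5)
Your opening reduction for $\car k \ne 2$ is correct ($L + \tor$ is quasi-regular, contains the maximal toral subalgebra, and acts irreducibly, so it equals $\gl_n$ by Remark \ref{maxrank}, and the direct-sum decomposition then forces $\g_\alpha \subseteq L$ for every $\alpha$), and your endgame when $L_0 = 0$ --- the complete-graph observation with the compatibility $c_{ik} = c_{ij}c_{jk}$, followed by splitting the coefficients using the uniqueness of square roots in characteristic $2$ --- is also correct; it is a sound, arguably more transparent alternative to the paper's normalization by the cocharacters $h_i(t_i)$, which adjusts only the simple-root lines and leaves the compatibility for the remaining roots implicit. The genuine gap is the structural claim you make when $L_0 \ne 0$ acts reducibly: you write $L_0 = (L_0 \cap \tor) \oplus \bigoplus_{\alpha \in S} \g_\alpha$ with $S$ a closed \emph{subsystem}, i.e., symmetric, corresponding to a partition of $\{1,\dots,n\}$ into blocks with $\sl(V_B) \subseteq L_0$. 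Nothing in the definitions gives symmetry of $S$: $L_0$ is generated by the pure intersections $L \cap \g_\alpha$, and its root support is a priori only a closed \emph{subset} of $\Phi$. For instance, $L$ could contain $E_{12}$ but not $E_{21}$, making $L_0$ a nilpotent algebra supported on positive roots; such subsets correspond to no partition, and no $\sl(V_B)$ lies inside them. Every subsequent step of your block analysis (no pure vectors between blocks, the bracket $[E_{ki}, E_{ij}+cE_{ji}] = E_{kj}$, the $L$-invariance of $V_B$) presupposes this symmetric block structure. Symmetry is plausibly \emph{forced} by irreducibility of $L$ --- since $L_0$ is an ideal of $L + \tor$ by Lemma \ref{L0.ideal}, both $V^{L_0}$ and $L_0 V$ are $L$-invariant, so $V^{L_0} = 0$, and Engel's theorem then excludes a support with $S \cap -S = \emptyset$, since a closed subset meeting its negative trivially lies in a positive system --- but ruling out a \emph{mixed} support, with $S \cap -S \ne \emptyset$ together with a nonempty non-symmetric part, requires a genuine further argument that you do not supply.

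The paper's proof sidesteps exactly this point: in the case $L_0 \ne 0$ it never posits a global block decomposition. After replacing $L$ by $L + \tor$, it takes an irreducible $L_1$-submodule $W$ for $L_1 := L_0 + \tor$, arranges $\dim W \ge 2$ via the graph automorphism (transpose) when necessary, shows $L_1 \cap \gl(W) = \gl(W)$ by applying Lemma \ref{L0.subsystem} inside $\gl(W)$, and then, assuming $W \ne V$, brackets a mixed element $E_{ij} + cE_{ji} \in L$ crossing $W$ against a root vector of $\sl(W) \subseteq L_0$ --- essentially your computation --- to manufacture a pure root vector in $L_0$ that violates the $L_1$-invariance of $W$. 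Hence $L_0 \ne 0$ forces $L_0 = \sl_n$ with no symmetry hypothesis on the support. If you replace your partition claim by this localization to an irreducible $L_1$-submodule, or alternatively prove that irreducibility of $L$ forces $S = -S$, the rest of your proposal goes through.
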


\begin{proof}
Let $T$ be the maximal torus with respect to which $L$ is quasi-regular.  After conjugation by an element of $\GL_n(k)$, we may assume that $T$ is the diagonal matrices.  If $L_0 = L$ or even $L_0 + \tor = \gl_n$, Lemma \ref{L0.subsystem} gives that $L$ contains $\sl_n$.

\subsubsection*{Case: $L_0 \ne 0$} Suppose $L_0 \ne 0$.  We claim that \eqref{qr.sl} holds.  Replacing $L$ with $L + \tor$, we may assume that $\tor \subseteq L$.  We claim that $L_1 := L_0 + \tor$ acts irreducibly on the natural representation $V := k^n$ of $\gl_n$. If $\dim V =2$, the result is clear.  So we assume that $\dim V \ge 3$.

Suppose $W \subseteq V$ is a subspace on which $L_1$ acts nontrivially and irreducibly.  Conjugating by a monomial matrix, we may assume that $W$ is the subspace consisting of vectors whose nonzero entries are in the first $w := \dim W$ coordinates.  If $w =1$,  we can apply the graph automorphism that inverts $T$ and permutes the root spaces and get a possibly different
subalgebra $L'$ which leaves invariant a hyperplane.   Of course, it suffices to prove the result for
$L'$ and so we may take $w \ge 2$.  Now $L_1 \cap \gl(W)$ is a quasi-regular subalgebra of $\gl(W)$ acting irreducibly on $W$ and it is generated by $\tor \cap \gl(W)$ and those $\g_\alpha$ contained in $L$, so by Lemma \ref{L0.subsystem} it equals $\gl(W)$.

If $W \ne V$, then there is a $\beta \in \Phi$ such that $\g_{\pm \beta} \cap L_0 = 0$ yet $(\g_{\pm \beta} \cap L) W \not\subseteq W$.  That is, there exists $i > w$ and $j \le w$ such that $E_{ij} - cE_{ji} \in L$ for some $c \in k^\times$, where $E_{ij}$ denotes the matrix whose unique nonzero entry is a 1 in the $(i,j)$-entry.  As $\dim W \ge 2$, there is $\ell \le w$, $\ell \ne j$ and $E_{\ell j} \in \sl(W) \subseteq L_0$.  So $[E_{\ell j}, E_{ij} - c E_{ji}] = -E_{i\ell}$ is in $L$, hence in $L_0$, yet $E_{i\ell} W \not \subseteq W$, a contradiction.  Thus $W = V$, i.e., $L_1$ acts irreducibly on $V$ and $L_0 = \sl_n$.

\subsubsection*{Case: $L_0 = 0$} Suppose $L_0 = 0$.  If $\car k \ne 2$, then $L + \tor$ cannot be irreducible (Remark \ref{maxrank}), so assume $\car k = 2$.  We prove \eqref{qr.symm}.  

Define $\hat{L}$ to be the subspace of $\gl(V)$ generated by $\tor$ and those $\g_{\pm \alpha}$ with nonzero intersection with $L$.  It is closed under the bracket.  Indeed, fixing nonzero elements $x_\alpha \in \g_\alpha$ for all $\alpha \in \Phi$, those $\g_{\pm \alpha}$ that meet $L$ are spanned by an element $x_\alpha + c_\alpha x_{-\alpha}$ for some $c_\alpha \in \kx$.  If $\g_{\pm \beta}$ also meets $L$, then
\[
[x_\alpha + c_\alpha x_{-\alpha}, x_\beta + c_\beta x_{-\beta}] \in \g_{\pm (\alpha + \beta)} + \g_{\pm (\alpha - \beta)}.
\]
As $L$ acts irreducibly on $V$, so does $\hat{L}$, and Lemma \ref{L0.subsystem} gives that $\hat{L} = \gl_n$ and in particular $\g_{\pm \alpha}$ meets $L$ for every root $\alpha$.

For each simple root $\alpha_i$, set $h_i \!: \Gm \to \GL_n$ to be a cocharacter such that $\alpha_j \circ h_i \!: \Gm \to \Gm$ is $t \mapsto 1$ if $i \ne j$ and $t \mapsto t^{r_i}$ for some $r_i \ne 0$ if $i = j$.  As 
\[
\Ad(h_i(t)) (x_{\alpha_i} + c_{\alpha_i} x_{-\alpha_i}) = t^{r_{i}} x_{\alpha_i} + \frac{c_{\alpha_i}}{t^{r_i}} x_{-\alpha_i},
\]
there is a $t_i \in \kx$ for each $i$ so that $\Ad(h_i(t_i)) (\g_{\pm \alpha_i} \cap L)$ is generated by $E_{i,i+1} + E_{i+1,i}$.  Conjugating $L$ by $\prod h_i(t_i)$ arranges this for all simple roots $\alpha_i$ at once, and it follows that the resulting conjugate of $L$ consists of symmetric matrices and intersects  $\g_{\pm \alpha}$ nontrivially for all $\alpha \in \Phi$, whence
$L$ contains the space of alternating matrices.
\end{proof}

%%%%%%%%%%%%%%%%%%%%%%%%%%%%%%%%%%%%%%%%%%%%%%%%%%%%%%%%%%%%%%%%%%%%%
\section{Type $A$ and   $\car k \ne 2$}

Recall that $\sl_n$ for $n \ge 2$ is either simple ($\car k$ does not divide $n$) or has a unique nontrivial ideal, the center (consisting of the scalar matrices, in case $\car k$ does divide $n$).  

The next two items have no restrictions
on the characteristic of $k$.  We do not need the first result in characteristic $2$. 
 
\begin{eg} \label{regular.A}
Suppose that  $x$ is regular nilpotent in $\sl_n$ for some $n \ge 2$; we claim that $e(x) = 2$, i.e., 2 generic $\SL_n(k)$-conjugates of $x$ generate $\sl_n$.  Up to conjugacy, $x$ has 
1's on the superdiagonal and 0's in all other entries. Choose a
conjugate $y$ of $x$ whose only nonzero entries are $x_2, \ldots, x_n$ on the subdiagonal.    Then  $w:=[x,y]$ is diagonal with entries $z_1, \ldots, z_n$
where $(z_1, \ldots, z_n) = (-x_2, x_2 - x_3, \ldots, x_{n-1} - x_n, x_n)$.   For a nonempty
open subvariety of $(x_2, \ldots, x_n)$
 the $z_i-z_j$ are distinct.   Thus, the algebra generated by $w$ and $x$ contains all
the positive simple root algebras and similarly the algebra generated by $w$ and $y$ contains all the negative
simple root algebras, whence $\langle x, y \rangle = \sl_n$.  Since the condition on generating $\sl_n$ is open (Lemma \ref{gen}\eqref{opengen.trivial}),
this implies that $2$ generic conjugates of $x$ generate $\sl_n$.  
\end{eg}

For $x \in \sl_n$, put $\alpha(x)$ for  the dimension of the largest eigenspace.

\begin{lem} \label{dc1}
For non-central $x \in \sl_n$ with $n \ge 2$, if $e > \frac{n-1}{n-\alpha(x)}$, then the subalgebra of $\sl_n$ generated by $e$ generic conjugates of $x$ fixes no $1$-dimensional subspace nor codimension-$1$ subspace of the natural module.
\end{lem}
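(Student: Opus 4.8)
The plan is to produce, for each of the two ``bad'' conditions (fixing a line and fixing a hyperplane), a proper closed subset of $(x^G)^{\times e}$ containing all offending $e$-tuples; intersecting the two dense open complements then gives the conclusion. The two cases are dual to one another: a subalgebra $\s \subseteq \sl_n$ fixes a hyperplane $H \subset V := k^n$ exactly when it fixes the line $H^\perp \subset V^*$, and under the dual action $y \mapsto -y^\top$ the conjugates of $x$ become conjugates of $-x^\top$, whose eigenspaces have the same dimensions as those of $x$, so $\alpha(-x^\top) = \alpha(x)$ and the numerical hypothesis is unchanged. Thus by Example \ref{dual.eg} it suffices to treat the line case; the hyperplane case is the identical argument run inside $V^*$ with $x$ replaced by $-x^\top$.

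For the line case, let $G = \SL_n$ and consider the incidence variety
\[
Z := \{ ([v], y_1, \dots, y_e) \in \P(V) \times (x^G)^{\times e} \mid y_i v \in kv \text{ for all } i \},
\]
which is closed (cut out by the conditions $y_i v \wedge v = 0$). Its projection to $(x^G)^{\times e}$ is exactly the set of $e$-tuples fixing a common line, so it suffices to show $\dim Z < e \cdot \dim x^G$; since $G$ is connected, $x^G$ and hence $(x^G)^{\times e}$ are irreducible, so this forces the projection's image to lie in a proper closed subset. The fiber of $Z \to \P(V)$ over $[v]$ is $F_{[v]}^{\times e}$, where $F_{[v]} := \{ y \in x^G \mid v \text{ is an eigenvector of } y \}$, and $G$ acts transitively on $\P(V)$, so $\dim F_{[v]}$ is independent of $[v]$ and $\dim Z \le (n-1) + e \cdot \dim F_{[v]}$.

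The heart of the argument, and the step I expect to be the main obstacle, is the computation
\[
\dim F_{[v]} = \dim x^G - (n - \alpha(x)).
\]
I would obtain this as follows. Writing $E_c$ for the eigenspace of $x$ of eigenvalue $c$ and $d_c := \dim E_c$, a conjugate $y = gxg^{-1}$ fixes $[v]$ iff $g^{-1}v$ is an eigenvector of $x$, i.e. $g^{-1}v \in E_c \setminus 0$ for some $c$. The map $g \mapsto g^{-1}v$ is a surjection $G \to V \setminus 0$ with equidimensional fibers of dimension $\dim G - n$, so the preimage of $E_c$ has dimension $\dim G - (n - d_c)$; composing with $g \mapsto gxg^{-1}$, whose fibers are the cosets of $Z_G(x)$ of dimension $\dim G - \dim x^G$, gives a piece of $F_{[v]}$ of dimension $\dim x^G - (n - d_c)$. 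Taking the maximum over $c$ selects the largest eigenspace, $d_c = \alpha(x)$, yielding the displayed formula. Plugging in gives
\[
\dim Z \le (n-1) + e\bigl(\dim x^G - (n - \alpha(x))\bigr),
\]
and this is strictly less than $e \cdot \dim x^G$ precisely when $e(n - \alpha(x)) > n-1$, i.e. when $e > \tfrac{n-1}{n-\alpha(x)}$ (this is where non-centrality is used, to ensure $\alpha(x) \le n-1$ and the bound is finite). Since the condition ``fixes no $1$-dimensional subspace'' is open by Lemma \ref{gen}\eqref{opengen.inv}, a generic $e$-tuple avoids the proper image of $Z$; combining with the dual (hyperplane) statement completes the proof.
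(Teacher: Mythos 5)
Your proposal is correct and takes essentially the same route as the paper: an incidence-variety dimension count showing the locus of $e$-tuples of conjugates with a common eigenvector line has dimension at most $(n-1) + e\bigl(\dim x^G - (n-\alpha(x))\bigr) < e \cdot \dim x^G$, followed by the same transpose/duality reduction (using $\alpha(x^\T) = \alpha(x)$) for the hyperplane case. The only cosmetic difference is that you bundle everything into a single incidence variety over $\P(V)$ and compute the key fiber dimension $\dim\{y \in x^G \mid v \text{ an eigenvector of } y\}$ via orbit-map fibrations (implicitly using that $Z_G(x)$ preserves the eigenspaces of $x$, which makes your set $S_c$ saturated under the conjugation fibers), whereas the paper first conjugates the whole tuple into a fixed parabolic $\p$ and then computes $\dim(x^G \cap \p)$ with a second incidence variety in $x^G \times \P^{n-1}$ --- the identical count in different packaging.
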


The hypothesis that $x$ is non-central ensures that the denominator $n - \alpha(x)$ is not zero.

\begin{proof}
Suppose the subalgebra generated by $e$ generic conjugates of $x$ fixes a line.  Then (Lemma \ref{gen}\eqref{opengen.inv}), every subalgebra generated by $e$ conjugates fixes a line.  Putting $X := x^{\SL_n}$, there is a map $G \times (\times^e X) \to \times^e X$ via $(g, x_1, \ldots, x_e) \mapsto (\Ad(g)x_1, \ldots, \Ad(g)x_e)$, and by hypothesis $\times^e X$ belongs to the image of $G \times (\times^e (X \cap \p))$
where $\p$ is the stabilizer of the first basis vector in the natural module, the Lie algebra of a parabolic subgroup $P$ of $\sl_n$.  Thus
\[
e \cdot \dim X \le \dim \P^1 + e \cdot \dim(X \cap \p).
\]
and consequently
\begin{equation} \label{dc1.1}
e(\dim X - \dim (X \cap \p)) \le \dim (G/P) = n - 1.
\end{equation}
 
Now consider the variety $Y \subset X \times \P^{n-1}$ with $k$-points 
\[
Y(k) = \{ (y, \omega) \in X(k) \times \P(k^n) \mid y\omega = \omega \}.
\]
The projection of $Y$ on the first factor maps $Y$ onto $X$ with fibers of dimension $\alpha(x) - 1$.  The projection of $Y$ on the second factor maps $Y$ onto $\P^{n-1}$ with fibers of dimension $\dim(X \cap \p)$.  Consequently,
\[
\dim X + \alpha(x) - 1 = \dim Y = (n-1) + \dim(X \cap \p).
\]
Combining this with \eqref{dc1.1} gives $e \le \frac{n-1}{n-\alpha(x)}$.

Now suppose each subalgebra $\g$ generated by $e$ generic conjugates of $x$ fixes a codimension-1 subspace $V$ of the natural module.  Using the dot product we may identify the natural module $k^n$ with its contragradient $(k^n)^*$, and it follows that the subalgebra $\{ y^\T \mid y \in \g \}$ fixes the line in $(k^n)^*$ of elements vanishing on $V$.  Consequently $e \le \frac{n-1}{n-\alpha(x^\T)}$.  As $\alpha(x^\T) = \alpha(x)$, the claim is proved.
\end{proof}

\begin{prop} \label{A.gen}
Assume $\car k \ne 2$.  For each nonzero nilpotent $x \in \sl_n$ with $n \ge 3$, $e$ generic conjugates of $x$ generate $\sl_n$, where:
\begin{enumerate}
\item \label{A.gen.3} $e = 3$ if $x$ has Jordan canonical form with partition $(2, 2, \ldots, 2)$ or $(2, 2, \ldots, 2, 1)$.
\item \label{A.gen.2} $e = 2$ if $\alpha(x) \le \lceil n/2 \rceil$ but we are not in case \eqref{A.gen.3}.
\item \label{A.gen.o} $e = \lceil \frac{n}{n-\alpha(x)} \rceil$ if $\alpha(x) > \lceil n/2 \rceil$.
\end{enumerate}
\end{prop}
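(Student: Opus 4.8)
The plan is to exploit the generation lemma (Lemma~\ref{gen}, in particular parts \eqref{opengen.trivial} and \eqref{opengen.inv}) together with the already-established bound on fixing lines and hyperplanes from Lemma~\ref{dc1}, reducing the problem of generating all of $\sl_n$ to three local tasks: generating a subalgebra of large enough dimension, having no invariant line or hyperplane, and then invoking the classification of quasi-regular irreducible subalgebras from Theorem~\ref{qr.thm}.

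First I would set up the general strategy for all three cases. By Lemma~\ref{gen}\eqref{opengen.trivial} and \eqref{opengen.inv}, the subalgebra $\mathfrak{s}$ generated by $e$ \emph{generic} conjugates of $x$ has the maximum possible dimension and leaves invariant no subspace of any given dimension that can fail to be invariant for \emph{some} choice of $e$ conjugates; hence it suffices to exhibit, for each case, \emph{a single} choice of $e$ conjugates $x_1,\ldots,x_e$ whose generated subalgebra $\mathfrak{s}$ is all of $\sl_n$, after which genericity follows automatically. The key reduction is that once $\mathfrak{s}$ acts irreducibly on the natural module $k^n$ and is quasi-regular, Theorem~\ref{qr.thm} forces $\mathfrak{s} \supseteq \sl_n$ (the symmetric-matrices alternative \eqref{qr.symm} is excluded since $\car k \ne 2$ by hypothesis). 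So the real content is arranging irreducibility, i.e.\ no proper invariant subspace, and arranging quasi-regularity.

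For the \textbf{generic bound} in cases \eqref{A.gen.2} and \eqref{A.gen.o}, I would use Lemma~\ref{dc1} to control invariant lines and hyperplanes: the stated thresholds $e = 2$ when $\alpha(x) \le \lceil n/2\rceil$ and $e = \lceil n/(n-\alpha(x))\rceil$ when $\alpha(x) > \lceil n/2\rceil$ are precisely chosen so that $e > (n-1)/(n-\alpha(x))$, which by Lemma~\ref{dc1} rules out invariant lines and hyperplanes for $e$ generic conjugates. This handles the two extreme-dimensional invariant subspaces; to upgrade from ``no invariant line or hyperplane'' to full irreducibility, I would combine quasi-regularity of the generated subalgebra (which one forces by choosing one conjugate in a suitable torus position, or arranging a strongly regular semisimple bracket as in Example~\ref{sr.eg} and Lemma~\ref{gen}\eqref{opengen.2}) with Lemma~\ref{L0.subsystem}, so that an invariant subspace of intermediate dimension would contradict the structure of a closed subsystem of type $A$. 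The main obstacle will be case \eqref{A.gen.3}, the partitions $(2,\ldots,2)$ and $(2,\ldots,2,1)$: here $\alpha(x) = \lceil n/2\rceil$ sits exactly at the boundary, and two conjugates do \emph{not} suffice, so I would need an explicit computation showing that three well-chosen conjugates of such an $x$ bracket-generate a quasi-regular, irreducible subalgebra. Concretely, I would pick two conjugates producing a semisimple element with many distinct eigenvalues (to recover the torus and enough root spaces via Example~\ref{regular.A}-style arguments) and a third conjugate to break any residual invariant even-dimensional subspace, then invoke Theorem~\ref{qr.thm} to conclude $\mathfrak{s} = \sl_n$.

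In summary, the order of operations is: (1) reduce to exhibiting one good $e$-tuple via Lemma~\ref{gen}; (2) use Lemma~\ref{dc1} to fix the numerics of $e$ and eliminate invariant lines and hyperplanes; (3) force quasi-regularity and apply Lemma~\ref{L0.subsystem} and Theorem~\ref{qr.thm} to promote this to irreducibility and hence $\mathfrak{s} \supseteq \sl_n$; (4) treat the boundary case \eqref{A.gen.3} by a dedicated three-conjugate computation. I expect step (4) to be the crux, since the delicate point is that the two-generator bound just barely fails for these very balanced partitions, and the third generator must be chosen with care to destroy the invariant subspaces that two conjugates leave intact.
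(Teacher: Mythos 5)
Your skeleton (reduce to witness tuples via Lemma~\ref{gen}, handle lines and hyperplanes via Lemma~\ref{dc1}, finish with Theorem~\ref{qr.thm}) overlaps with the paper's, but the step you rely on to get irreducibility is false, and it is exactly where the real work lies. Quasi-regularity together with the absence of invariant lines and hyperplanes does \emph{not} imply irreducibility, and Lemma~\ref{L0.subsystem} cannot bridge the gap: that lemma only says that $L_0$ is irreducible if and only if $L_0 = \sl(V)$, it gives no tool for excluding proper closed subsystems. Concretely, for $2 \le a \le n-2$ the subalgebra $(\gl_a \oplus \gl_{n-a}) \cap \sl_n$ contains the full diagonal torus $\tor$, hence strongly regular semisimple elements, so it is quasi-regular; it fixes no line and no hyperplane (its invariant subspaces have dimensions $0, a, n-a, n$); yet it is reducible and proper. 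So everything your step (3) establishes is satisfied by this subalgebra, and Theorem~\ref{qr.thm} never gets off the ground. The paper's mechanism for killing intermediate-dimensional invariant subspaces is entirely different: an induction on $n$, using closure degenerations of nilpotent orbits as in \S\ref{deforming} (note that Lemma~\ref{gen} permits the witness tuple to lie in $\overline{X}$, a feature the paper leans on heavily and your plan never uses), to exhibit $e$ conjugates generating a copy of $\sl_{n-1}$ (Example~\ref{sln.qr}); since the only proper invariant subspaces of $k^n$ under that copy are one line and one hyperplane, Lemma~\ref{gen}\eqref{opengen.inv} transfers ``no invariant $d$-space'' for each intermediate $d$ to generic tuples, and Lemma~\ref{dc1} handles $d = 1, n-1$. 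Your proposal contains no mechanism for producing these witnesses at all: Lemma~\ref{gen} only upgrades exhibited tuples to generic ones, so without the inductive construction (base cases being the regular nilpotent via Example~\ref{regular.A} and root elements via explicit simple-root plus lowest-root configurations, with case \eqref{A.gen.o} reduced to $\sl_{n-1}$ by deleting a part equal to $1$ from the partition), the plan is circular.

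There is also a numerical slip: your claim that the stated $e$ always satisfies $e > (n-1)/(n-\alpha(x))$ fails at the boundary $n$ odd, $\alpha(x) = \lceil n/2 \rceil = (n+1)/2$, where $(n-1)/(n-\alpha(x)) = 2 = e$, so Lemma~\ref{dc1} yields nothing precisely where case \eqref{A.gen.2} is most delicate; the paper's proof runs that case only after degenerating to the partitions $(3,2,\ldots,2,1)$ ($n$ even) or $(3,2,\ldots,2)$ ($n$ odd), applying Lemma~\ref{dc1} to the original class, where $\alpha(x) \le n/2$ makes the inequality strict, and it assigns $(2,\ldots,2,1)$ to case \eqref{A.gen.3} with $e = 3 > 2$. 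Finally, for case \eqref{A.gen.3} the paper's argument is soft rather than a dedicated matrix computation: two conjugates of $x$ with partition $(2,\ldots,2)$ generate a copy of $\sl_2^{\times n/2}$, the adjoint module of $\sl_n$ restricted to it is multiplicity-free, hence only finitely many subalgebras lie between $\sl_2^{\times n/2}$ and $\sl_n$, and a third generic conjugate escapes all of them because the irreducible variety $x^{\SL_n}$ generates $\sl_n$ and so lies in no proper one; the partition $(2,\ldots,2,1)$ is then handled by the $\sl_{n-1}$ embedding trick above. Your ``explicit three-conjugate computation'' is exactly the content left unspecified, and as written your outline gives no route to it.
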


\begin{proof}
The conjugacy class of $x$ is determined by its Jordan form, which corresponds to a partition $(p_1, \ldots, p_\alpha)$ of $n$, i.e., a list of numbers $p_1 \ge p_2 \ge \cdots \ge p_\alpha > 0$ such that $p_1 + \cdots + p_\alpha = n$.  If $x$ has partition $(n)$, then $e(x) = 2$ by Example \ref{regular.A}.

If $x$ has partition $(2, 1, \ldots, 1)$, i.e., the Jordan form of $x$ has a unique nonzero entry, then $x$ generates a root subalgebra, and we may assume it corresponds to a simple root.  The other root subalgebras for simple roots and for the lowest root suffice to generate $\sl_n$, so in this case $n = \lceil n / (n-(n-1)) \rceil$ conjugates suffice to generate.

Thus we may assume that $n \ge 4$.

Suppose first that $x$ has partition $(2, 2, \ldots, 2)$ and view $x$ as the image of a regular nilpotent in $\sl_2$ under the diagonal embedding in $\sl_2^{\times n/2} \subset \sl_n$.  As in Example \ref{regular.A}, two $\SL_2^{\times n/2}$-conjugates suffice to generate $\sl_2^{\times n/2}$.  As the adjoint representation of $\sl_n$ restricts to a multiplicity-free representation of $\sl_2^{\times n/2}$, there are only a finite number of Lie algebras lying between $\sl_2^{\times n/2}$ and $\sl_n$.  Now $x^{\SL_n}$ generates $\sl_n$ as a Lie algebra, so it is not contained in any of these proper sublagebras and the irreducible variety $x^{\SL_n}$ is not contained in the union of the proper subalgebras.  This proves the claim that 3 conjugates suffice to generate $\sl_n$.

If $x$ has partition $(2, 2, \ldots, 2, 1)$, then we view it as the image of $x' \in \sl_{n-1}$ where $x'$ has partition $(2, 2, \ldots, 2)$,  for which three $\SL_{n-1}$-conjugates generate $\sl_{n-1}$.  That is, three generic $\SL_n$-conjugates of $x$ generate a subalgebra $\lsub$ that is quasi-regular (Example \ref{sln.qr}).  Moreover, as $n = 2\alpha-1$, $\lsub$ does not fix a 1-dimensional or codimension-1 subspace of the natural module (Lemma \ref{dc1}), and therefore $\lsub$ acts irreducibly and $\lsub$ is the whole algebra $\sl_n$ (Remark \ref{maxrank}).

Now suppose $\alpha(x) \le n/2$ and we are not in case \eqref{A.gen.3}.  Then $p_1 \ge 3$ and by passing to a nilpotent element in the closure of $x^{\SL_n}$ as in \S\ref{deforming}, we can reduce to the cases
\begin{enumerate}
\renewcommand{\theenumi}{\alph{enumi}}
\item \label{A.gen.even}   $n$ is even and $x$ has partition $(3, 2, \ldots, 2, 1)$; or
\item \label{A.gen.odd}  $n$ is odd and $x$ has partition $(3, 2, \ldots, 2)$.
\end{enumerate}

In case \eqref{A.gen.even}, we see by induction that
  we can generate $\sl_{n-1}$ with two $\SL_n$-conjugates and we argue as in the preceding case.
  
In case \eqref{A.gen.odd}, deform to $y \in \overline{x^{\SL_n}}$ with partition $(3, 2, \ldots, 2, 1, 1)$.  It is the image of $y' \in \sl_{n-1}$ with partition $(3, 2, \ldots, 2, 1)$.  By induction on $n$, two $\SL_{n-1}$-conjugates of $y'$ generate a copy of $\sl_{n-1}$.  Arguing as in the preceding cases concludes the proof of \eqref{A.gen.2}.

Finally, suppose $\alpha(x) > \lceil n/2 \rceil$, so in particular $p_\alpha = 1$.  Put $x' \in \sl_{n-1}$ for a nilpotent with partition $(p_1, \ldots, p_{\alpha-1})$.  By induction, we find that $\lceil n/(n-\alpha) \rceil$ $\SL_{n-1}$-conjugates suffice to generate a copy of $\sl_{n-1}$, and we complete the proof as before.
\end{proof}

\begin{cor} \label{A.odd}
Suppose $\car k \ne 2$.
For noncentral $x \in \gl_n$ with $n \ge 2$ such that $x^{[p]} \in \{ 0, x \}$, there exist $e > 0$ and elements $x_1, \ldots, x_e \in x^{\SL_n}$ that generate a subalgebra containing $\sl_n$ such that $e \cdot \dim x^{\SL_n} \le \frac94 n^2$.
\end{cor}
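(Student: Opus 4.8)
The plan is to split on the hypothesis $x^{[p]}\in\{0,x\}$ into the nilpotent case ($x^{[p]}=0$) and the toral case ($x^{[p]}=x$), writing $\alpha:=\alpha(x)$ for the dimension of the largest eigenspace of $x$ on the natural module and $\beta:=n-\alpha$; since $x$ is noncentral, $1\le\beta\le n-1$. A nilpotent $x$ automatically lies in $\sl_n$ and satisfies $x^{\Gm\SL_n}=x^{\SL_n}$ (scaling preserves Jordan type), so Proposition \ref{A.gen} (with Example \ref{regular.A} for the regular class and the class $(2,1,\ldots,1)$) directly supplies $e$ generic conjugates of $x$ generating $\sl_n$. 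For toral $x$ I would invoke Lemma \ref{deform} to produce a nilpotent $y$ with $\dim y^{\SL_n}=\dim x^{\SL_n}$, with $y\in\overline{x^{\Gm\SL_n}}$, and with the same $\alpha$: the Jordan partition of $y$ is conjugate to the multiplicity partition of $x$, so both have $\alpha$ equal to their number of blocks. Thus the toral case inherits both the count $e$ and the orbit dimension of its nilpotent deformation.

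To transfer generation in the toral case I would apply Lemma \ref{gen}\eqref{opengen.1} with $G=\GL_n$, $M=\sl_n$, $N=\z(\sl_n)$ (so $M/N\cong L(\at)$ is irreducible and $\dim\g/M=1<\dim M/N$), taking $X=x^{\Gm\SL_n}$ (irreducible, $\SL_n$-invariant, open in its closure) and $Y=y^{\SL_n}\subseteq\overline X$, exactly as in Corollary \ref{gen.cor}. Since $e$ conjugates of $y$ generate a subalgebra containing $\sl_n$, so do $e$ generic elements of $x^{\Gm\SL_n}$; using $G=\GL_n$ here sidesteps any issue with the trace of $x$. Scaling changes neither the kernel $V^{(\cdot)}$ nor the $\SL_n$-orbit dimension, so these elements play the role of the $x_i$ in \eqref{ineq.gen}, and for nilpotent $x$ there is nothing to transfer.

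The numerical heart is a single uniform estimate. Fixing $\alpha$, the orbit dimension $\dim x^{\SL_n}=n^2-\sum_i n_i^2$ is largest when all eigenvalue multiplicities beyond the dominant one equal $1$, giving
\[
\dim x^{\SL_n}\le n^2-\alpha^2-(n-\alpha)=\beta(2n-\beta-1).
\]
In the two main regimes \eqref{A.gen.2} and \eqref{A.gen.o} of Proposition \ref{A.gen} one has $e\le\lceil n/\beta\rceil$, whence $e\beta\le n+\beta-1$ and therefore
\[
e\cdot\dim x^{\SL_n}\le(n+\beta-1)(2n-\beta-1).
\]
The right-hand side is maximized at $\beta=n/2$, where both factors equal $\tfrac{3n}{2}-1$, giving $(\tfrac{3n}{2}-1)^2=\tfrac94 n^2-3n+1<\tfrac94 n^2$. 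This is precisely where the constant $\tfrac94$ is forced: the jump of $e$ from $2$ to $3$ as $\alpha$ crosses $n/2$ is what makes the product peak near $\tfrac94 n^2$. The partitions $(2,\ldots,2)$ and $(2,\ldots,2,1)$ of case \eqref{A.gen.3}, where $e=3$ but $\lceil n/\beta\rceil=2$, escape the uniform bound and I would check them by hand: there $\dim x^{\SL_n}$ equals $n^2/2$ and $(n^2-1)/2$, so $e\cdot\dim x^{\SL_n}\le\tfrac32 n^2$. The value $n=2$, excluded from Proposition \ref{A.gen}, is immediate ($\dim x^{\SL_2}=2$, $e=2$).

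I expect the main obstacle to be organizational rather than deep: marshalling the several $e$-values of Proposition \ref{A.gen} into the clean bound above, the key being that replacing $e$ by $\lceil n/\beta\rceil$ and $\dim x^{\SL_n}$ by its maximum $\beta(2n-\beta-1)$ decouples the estimate from the fine partition data. The genuinely delicate point is the tightness at $\alpha\approx n/2$, which leaves essentially no slack in the constant and must be isolated from case \eqref{A.gen.3}; the other place requiring care is the toral reduction, where—because semisimple orbits are closed—one must pass through $x^{\Gm\SL_n}$ rather than $x^{\SL_n}$ in order to reach the nilpotent deformation $y$.
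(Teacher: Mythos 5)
Your proposal is correct and matches the paper's proof in essentially every step: the nilpotent case via Proposition \ref{A.gen}, with the orbit dimension bounded by the extremal class of partition $(n-\alpha+1,1,\ldots,1)$ (equivalently, centralizer dimension at least $\alpha^2+(n-\alpha)$), the partitions of case \eqref{A.gen.3} checked separately by hand, the value $n=2$ handled via Example \ref{regular.A}, and the toral case reduced to a nilpotent deformation by Lemma \ref{deform} and transferred through Lemma \ref{gen}\eqref{opengen.1} with $M=\sl_n$ and $N=\z(\sl_n)$. Your only variations are cosmetic: you merge regimes \eqref{A.gen.2} and \eqref{A.gen.o} into the single uniform estimate $e\cdot\dim x^{\SL_n}\le(n+\beta-1)(2n-\beta-1)\le\bigl(\tfrac{3n}{2}-1\bigr)^2$ where the paper treats them separately, and you are slightly more explicit than the paper about the $\gl_n$-versus-$\sl_n$ trace issue and about passing through $x^{\Gm\SL_n}$ (harmless, since rescaling generators does not change the generated subalgebra).
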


\begin{proof}
Suppose first that $x^{[p]} = 0$.  If $n = 2$, then $e(x) = 2$ by Example \ref{regular.A} and $\dim x^{\SL_n} = 2$, so assume that $n \ge 3$.  We consider the three cases in Proposition \ref{A.gen}.  In case \eqref{A.gen.3}, we have $\dim x^{\SL_n} \le n^2/2$ and $e(x) = 3$, so the claim is clear.  In case \eqref{A.gen.2}, $e = 2$ and $\dim x^{\SL_n} < n^2$.  In case \eqref{A.gen.o}, among those nilpotent $y$ with rank $n - \alpha(x)$, the one with the largest $\SL_n$-orbit has partition $(n-\alpha(x)+1, 1, \ldots, 1)$, whose orbit has dimension $n^2 - n - \alpha(x)^2 + \alpha(x)$.  Consequently, 
\[
e(x) \cdot \dim x^{\SL_n} < (n+\alpha(x) - 1)(2n - \alpha(x)).
\]
This is a quadratic polynomial in $\alpha(x)$ opening downwards with maximum at $(n+1)/2$.  As $\alpha \ge \lceil n/2 \rceil + 1 \ge n/2 + 1$, the right side is no larger than $\frac94 n^2 - 3n/2$ verifying the claim for $x$ nilpotent.

For $x \in \sl_n$ noncentral toral, let $y$ be the nilpotent element provided by Lemma \ref{deform}.  Then $\dim x^G = \dim y^G$ and the same number of conjugates suffice to generate a subalgebra containing $\sl_n$, as in Lemma \ref{gen}\eqref{opengen.1} with $M = \sl_n$ and $N = \z(\sl_n)$.
\end{proof}

%%%%%%%%%%%%%%%%%%%%%%%%%%%%%%%%%%%%%%%%%%%%%%%%%%%%%%%%%%%%%%%%%%%%%
\section{Type $A$ and $\car k = 2$}

\begin{prop}   \label{A.2.gen}
Suppose $\car k = 2$ and let $x \in \sl_n$ with $n \ge 2$ be a nilpotent element of square $0$ and rank $r$.   
  Then $\sl_n$ can be generated by $e:= \max \{ 3, \lceil{n/r}\rceil  \}$
  conjugates of $x$.
  \end{prop}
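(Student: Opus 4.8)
The plan is to reduce, via the openness packaged in Lemma \ref{gen}, to \emph{exhibiting} a single family of $e$ conjugates of $x$ that generate $\sl_n$, and then to identify the generated subalgebra using Theorem \ref{qr.thm}. Concretely, take the ambient group to be $\GL_n$, put $M = \sl_n$ and $N = \z(\sl_n)$, and note $\dim \gl_n/\sl_n = 1 < \dim \sl_n/\z(\sl_n) = \dim L(\at)$, the latter irreducible as an $\sl_n$-module by Example \ref{adjoint}. Then part \eqref{opengen.1} of Lemma \ref{gen} shows that ``the generated subalgebra contains $\sl_n$'' is an open condition on the $e$-tuple, so it suffices to produce $e$ conjugates whose generated subalgebra $L$ satisfies $L \supseteq \sl_n$ (equality then follows since each conjugate lies in $\sl_n$). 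I will arrange that $L$ is \emph{quasi-regular}, \emph{irreducible} on the natural module, and has $L_0 \ne 0$; Theorem \ref{qr.thm} then forces $L \supseteq \sl_n$, since its alternative conclusion (conjugacy into the symmetric matrices) occurs only when $L_0 = 0$.

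Since $x$ has Jordan type $(2^r, 1^{\,n-2r})$, its largest eigenspace is $\ker x$, of dimension $\alpha(x) = n-r$, so $n - \alpha(x) = r$. A direct check gives $e > (n-1)/r$ in every case: if $e = \lceil n/r \rceil$ then $e \ge n/r > (n-1)/r$, and if $e = 3$ then $n \le 3r$, whence $(n-1)/r < n/r \le 3$. Thus Lemma \ref{dc1} applies and guarantees that $e$ generic conjugates generate a subalgebra fixing no line and no hyperplane of the natural module. The quasi-regular structure is what upgrades this to full irreducibility.

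The inductive engine mirrors the block-peeling of Proposition \ref{A.gen}, with Theorem \ref{qr.thm} replacing Remark \ref{maxrank} (which is unavailable in characteristic $2$). When $x$ has a size-$1$ block (i.e.\ $n > 2r$), write $x = x' \oplus 0$ with $x' \in \sl_{n-1}$ of the same rank $r$; by induction, conjugates of $x'$ generate $\sl_{n-1}$, which, read inside $\GL_n$ as conjugates of $x$ fixing the last basis vector, generate a corner copy $\mathfrak{c} \cong \sl_{n-1}$. Then $L \supseteq \mathfrak{c}$ is quasi-regular (Example \ref{sln.qr}) and has $L_0 \ne 0$, as it contains the root subalgebras of $\mathfrak{c}$. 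The only proper $\mathfrak{c}$-submodules of $k^n$ are the line $k e_n$ and the hyperplane $\langle e_1, \dots, e_{n-1}\rangle$; since the remaining generic conjugate(s) together with Lemma \ref{dc1} prevent $L$ from fixing either, $L$ is irreducible, and Theorem \ref{qr.thm} yields $L = \sl_n$. The base of the induction is the fixed-point-free case, partition $(2^r)$ in $\sl_{2r}$ (here $e = 3$ and no size-$1$ block can be peeled): I would realize $x$ as the diagonal regular nilpotent of the block subalgebra $\sl_2^{\,r}$, observe that two conjugates already generate $\sl_2^{\,r}$, and use a third generic conjugate to escape every proper subalgebra above $\sl_2^{\,r}$, as in Proposition \ref{A.gen}\eqref{A.gen.3}.

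The hard part is genuinely characteristic $2$, in three places. First, Theorem \ref{qr.thm} really does carry the extra symmetric-matrix alternative, so I must maintain $L_0 \ne 0$ throughout; it is this feature, not a dimension count, that excludes the symmetric case. Second, and more seriously, the base case $(2^r)$ cannot invoke multiplicity-freeness of $\gl_{2r}$ as an $\sl_2^{\,r}$-module: in characteristic $2$ the diagonal blocks $\gl_2$ contribute repeated trivial composition factors, so there may be infinitely many intermediate subalgebras above $\sl_2^{\,r}$. I expect to circumvent this by first producing a strongly regular semisimple element in $L$ (an open condition by Lemma \ref{gen}\eqref{opengen.2} and Example \ref{sr.eg}), which makes $L$ quasi-regular and cuts the intermediate subalgebras down to the finitely many quasi-regular ones, after which Lemma \ref{L0.subsystem} and Lemma \ref{dc1} finish. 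Finally, using \emph{exactly} $e$ conjugates is delicate in the tight configurations where peeling a size-$1$ block does not lower the required number of conjugates, leaving no spare conjugate to break the corner's invariant line and hyperplane; there one must sharpen the inductive hypothesis—arranging that the $\sl_{n-1}$ sub-problem is solved with one fewer conjugate, or choosing the peeling to free up a spare.
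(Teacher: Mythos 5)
Your inductive step is essentially the paper's own argument (peel a $1$-block to get a corner $\sl_{n-1}$, use Lemma \ref{dc1} to kill invariant lines and hyperplanes, Example \ref{sln.qr} for quasi-regularity, Theorem \ref{qr.thm} to conclude), but your base case is where the proof genuinely breaks. For the fixed-point-free partition $(2^r)$ with $2r = n > 4$, your plan is to generate $\sl_2^{\times r}$ with two conjugates and then produce a strongly regular semisimple element to invoke Lemma \ref{gen}\eqref{opengen.2}. But $\sl_2^{\times r}$ contains no strongly regular --- indeed no regular --- semisimple element of $\gl_{2r}$: every element of it has each eigenvalue with multiplicity at least two, and in characteristic $2$ the situation is worse still, since the diagonal part of each $\sl_2$ block consists of scalars. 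So there is no witness inside $\sl_2^{\times r}$, and you offer no construction of one in the algebra generated by three conjugates; the step you flag as ``I expect to circumvent'' is exactly the step that fails. The paper avoids this case altogether: for even $n > 4$ it degenerates a rank-$n/2$ element to a rank-$(n/2 - 1)$ square-zero element in its orbit closure (both have $e = 3$, and generation passes from the closure back to the orbit by Lemma \ref{gen}); the degenerate element has a $1$-block, so the corner induction applies, and only $(n,r) = (4,2)$ requires an ad hoc verification, with root elements ($r=1$, which also covers the bottom cases $n = 2,3$ where Example \ref{sln.qr} is unavailable) handled by explicit root-element generation.

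Two further repairable slips. First, your worry about ``tight'' cases with no spare conjugate rests on a misuse of the openness mechanism: no spare is needed. The witness $e$-tuple may be taken entirely inside the corner, certifying the open conditions --- dimension at least $\dim \sl_{n-1}$, presence of a strongly regular semisimple element (hence quasi-regularity, Example \ref{sr.eg}), and no invariant $d$-space for $2 \le d \le n-2$ via Lemma \ref{gen}\eqref{opengen.inv} --- while Lemma \ref{dc1} applies to generic tuples directly, with no witness required, to exclude $d = 1$ and $d = n-1$; intersecting these dense open sets finishes the step, so no sharpened inductive hypothesis is needed. Second, your exclusion of the symmetric-matrix alternative via $L_0 \ne 0$ is applied to the wrong algebra: $L_0 \ne 0$ holds for the corner witness with respect to the diagonal torus, but Theorem \ref{qr.thm} must be applied to the \emph{generic} algebra $L$, whose quasi-regularity is witnessed by a strongly regular element with respect to an a priori unknown torus, and nothing transfers the condition $L_0 \ne 0$ (computed with respect to that torus) by openness. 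The paper instead rules out conclusion \eqref{qr.symm} of Theorem \ref{qr.thm} by a dimension count: generically $\dim L \ge \dim \sl_{n-1} = n^2 - 2n$, whereas a subalgebra of symmetric matrices lying in $\sl_n$ has dimension at most $n(n+1)/2 - 1$, which is strictly smaller for all $n \ge 5$.
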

  
  \begin{proof}    Note the result is clear if $x$ is a root element by taking root elements in each of the simple
  positive root subalgebras and in the root subalgebra corresponding to the negative of the highest root.
  This gives the result for $n=2,3$ and shows that for $n=4$, it suffices to consider $r=2$.   Choose  two
  conjugates of $x$ and $y$ generating $\sl_2 \times \sl_2$.   It is straightforward to see for a generic conjugate
  $z$ of $x$, the elements $x$, $y$ and $z$ generate $\sl_4$. So assume $n > 4$.   
  
  If $n$ is odd, it follows by induction on $n$ that $e$ conjugates of $x$ can generate
  an $\sl_{n-1}$.   On the other hand, the condition on the rank implies by Lemma \ref{dc1} that $e$ generic conjugates of $x$ do not
  fix a $1$-space or a hyperplane.   Thus,  generically $e$ conjugates of $x$ generate a subalgebra that acts irreducibly (as in the proof of Lemma \ref{dc1}) and is quasi-regular by Example \ref{sln.qr}.
  Also, we see that generically the dimension of the Lie algebra generated by
  $e$ conjugates has dimension at least $(e-1)^2 - 1$.  Since $n > 4$, this is larger than the dimension of the space
  of symmetric matrices, whence by Theorem \ref{qr.thm},  we see that $e$ generic conjugates generate $\sl_n$.
  
  Now assume that $n$ is even.   By passing to closures we may assume that $r < n/2$ (since $n > 4$,  $e = 3$
  for both elements of rank $n/2$ and rank $n/2-1$).   Now argue just as for the case that $n$ is odd.
  \end{proof}
  
\begin{rmk*} 
The result also holds for idempotents (i.e., toral elements) of rank $e \le n/2$ by a closure argument.
\end{rmk*}
  
  \begin{cor} \label{A.2}  Suppose $\car k = 2$.
  For noncentral $x \in \gl_{n}$ with $n \ge 2$ such that $x^{[2]} \in \{ 0, x \}$, there exist $e > 0$ and elements $x_1, \ldots, x_e \in x^{\SL_n}$ that generate a subalgebra containing $\sl_n$ such that $e \cdot \dim x^{\SL_n} \le 2n^2 - 2$.
  \end{cor}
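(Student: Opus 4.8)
The plan is to reduce both possibilities $x^{[2]}=0$ and $x^{[2]}=x$ to a single computation for a square-zero nilpotent of rank $s$ with $1\le s\le n/2$, whose $\SL_n$-class has dimension $2s(n-s)$ and which, by Proposition~\ref{A.2.gen}, is generated by $e:=\max\{3,\lceil n/s\rceil\}$ conjugates. I would first dispose of the case $x^{[2]}=0$, i.e.\ $x^2=0$. Such an $x$ lies in $\sl_n$, is nonzero, so has rank $s:=\rank x\ge 1$ and Jordan type $(2^s,1^{\,n-2s})$; in particular $s\le n/2$. The conjugate partition is $(n-s,s)$, so the centralizer of $x$ in $\gl_n$ has dimension $(n-s)^2+s^2$ and hence $\dim x^{\SL_n}=2s(n-s)$. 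Here Proposition~\ref{A.2.gen} applies on the nose; note also that scaling preserves the nilpotent orbit, so (taking $G=\SL_n$) $x^{\Gm G}=x^{\SL_n}$ and no scalars intervene.

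For $x^{[2]}=x$, a noncentral idempotent, I would argue by deformation exactly as in the toral part of Corollary~\ref{A.odd}. Since $x$ has precisely the two eigenvalues $0$ and $1$, Lemma~\ref{deform} furnishes a nilpotent $y$ in the closure of $x^{\Gm G}$ with $y^2=0$, with $\rank y=s:=\min(\rank x,\,n-\rank x)\le n/2$, and with $\dim y^{\SL_n}=\dim x^{\SL_n}=2s(n-s)$. The nilpotent case shows that $e=\max\{3,\lceil n/s\rceil\}$ conjugates of $y$ generate $\sl_n$, so Lemma~\ref{gen}\eqref{opengen.1} (with $M=\sl_n$ and $N=\z(\sl_n)$, whose quotient is the irreducible module $L(\at)$ by Example~\ref{adjoint}) shows that $e$ generic elements of $X:=x^{\Gm G}$ generate a subalgebra containing $\sl_n$; the very small cases $n=2,3$ I would check directly against Proposition~\ref{A.2.gen}, since Example~\ref{adjoint} excludes $(A_1,2)$.

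The one subtlety is that Lemma~\ref{gen} produces elements $\lambda\,\Ad(g)x$ of $X$, whereas the statement demands genuine conjugates $x_i\in x^{\SL_n}$. I would remove the scalars by observing that the subalgebra generated by $\lambda_1 a_1,\dots,\lambda_e a_e$ does not depend on the $\lambda_i\in\kx$: writing $F$ for the free Lie algebra on $e$ letters, the evaluation $t_i\mapsto\lambda_i a_i$ restricted to the multidegree-$d$ component is $(\prod_i\lambda_i^{d_i})$ times the evaluation $t_i\mapsto a_i$, so the two have the same image on each graded piece, hence the same image overall. Consequently the open nonempty generating locus in $X^{\times e}$ is invariant under coordinatewise scaling and therefore meets the slice $(x^{\SL_n})^{\times e}$; thus $e$ generic conjugates of $x$ already generate a subalgebra containing $\sl_n$.

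It then remains to verify $e\cdot 2s(n-s)\le 2n^2-2$ for $1\le s\le n/2$, which I would split by the value of $e$. If $e=\lceil n/s\rceil$, then $es\le n+s-1$ gives $e\cdot 2s(n-s)\le 2(n-s)(n+s-1)=2n^2-2n-2s(s-1)\le 2n^2-2n$, which is at most $2n^2-2$. If instead $e=3$ (forcing $s\ge n/3$), then $s(n-s)\le n^2/4$ gives $e\cdot 2s(n-s)\le \tfrac32 n^2\le 2n^2-2$ for $n\ge 2$, with equality at $n=2$, $s=1$. I expect the main obstacle to be the toral case rather than this arithmetic: one must ensure the deformation lands on the square-zero nilpotent of the correct rank $s=\min(\rank x,\,n-\rank x)$ so that the orbit dimensions match, and must transfer generation from the enlarged orbit $x^{\Gm G}$ back to honest conjugates of $x$ via the scaling-invariance above. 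The nilpotent case and the final inequalities are then routine.
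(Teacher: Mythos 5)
Your proposal is correct and takes essentially the same route as the paper's own proof: the nilpotent case via Proposition \ref{A.2.gen} with $\dim x^{\SL_n}=2r(n-r)$ and the same split on $e$ (the paper bounds $6r(n-r)\le\tfrac32 n^2$ and $e\cdot\dim x^{\SL_n}\le 2(n^2-r^2)$), and the toral case by deforming to a square-zero nilpotent in $\overline{x^{\Gm\GL_n}}$ via Lemma \ref{deform} and transferring generation by Lemma \ref{gen}\eqref{opengen.1} with $M=\sl_n$ and $N=\z(\sl_n)$. Your two extra touches---the multihomogeneity argument removing the $\Gm$-scalars to land in $x^{\SL_n}$ itself, and the direct verification at $n=2$, where $\sl_2/\z(\sl_2)$ has trivial $\sl_2$-action and so the irreducibility hypothesis of Lemma \ref{gen}\eqref{opengen.1} fails (the $(A_1,2)$ exclusion in Example \ref{adjoint})---are legitimate patches of points the paper's proof passes over in silence.
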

  
  \begin{proof}   Let $x \in \sl_n \setminus \z(\sl_n)$ satisfy $x^{[2]} = 0$ and put $r$ for the rank of $x$.  Then $\dim x^{\SL_n} =  n^2 - (r^2 + (n-r)^2)=  2r(n-r)$.   If $3$ conjugates of $x$ generate $\sl_n$, then $3 \cdot \dim x^{\SL_n} = 6r(n-r)$.  This has a maximum at $r = n/2$, where it is $\frac32 n^2 \le 2n^2 - 2$.  Otherwise $(n+r)/r$ conjugates suffice to generate, and we have $e \dim x^{\SL_n} \le 2(n^2-r^2) \le 2n^2 -2$.

  Now suppose that $x \in \gl_n$ is noncentral toral.  Take $y \in \overline{x^{\Gm \GL_n}}$ such that $y^{[2]} = 0$ as in Lemma \ref{deform}, so $\dim y^{\SL_n} = \dim x^{\SL_n}$.  Applying Lemma \ref{gen}\eqref{opengen.1} with $M = \sl_n$ and $N = \z(\sl_n)$ gives that $e \cdot \dim x^{\SL_n} \le 2n^2 - 2$ also in this case.
   \end{proof}

%%%%%%%%%%%%%%%%%%%%%%%%%%%%%%%%%%%%%%%%%%%%%%%%%%%%%%%%%%%%%%%%%%%%%%%%
\section{Type $C$ and $\car k \ne 2$}

\begin{prop} \label{C.gen}
Assume $\car k \ne 2$.  For every nonzero nilpotent $x \in \sp_{2n}$ for $n \ge 1$ of rank $r$, $e$ generic conjugates of $x$ generate $\sp_{2n}$, where:
\begin{enumerate}
\item \label{C.gen.3} $e = 3$ if $x$ has Jordan canonical form with partition $(2, 2, \ldots, 2)$.
\item \label{C.gen.2} $e = 2$ if $r \ge n$ but we are not in case \eqref{C.gen.3}.
\item \label{C.gen.o} $e = 2 \lceil n/r \rceil$ if $r < n$.
\end{enumerate}
\end{prop}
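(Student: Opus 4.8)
The plan is to mirror the structure of the type-$A$ result (Proposition \ref{A.gen}) as closely as possible, using the symplectic analogue of the embedding tricks and the deformation machinery of \S\ref{deforming}. The fundamental new ingredient compared to type $A$ is that $\sp_{2n}$ contains a diagonal copy of $\sp_2^{\times n} \cong \sl_2^{\times n}$, so the regular-nilpotent generation result of Example \ref{regular.A} can be bootstrapped along this subalgebra. First I would reduce to specific partitions. A nilpotent element of $\sp_{2n}$ is classified by a partition of $2n$ in which odd parts occur with even multiplicity (valid since $\car k \ne 2$). In case \eqref{C.gen.3} the partition is $(2,\dots,2)$; here $x$ is the image of a regular nilpotent of $\sl_2$ under the diagonal embedding $\sl_2 \hookrightarrow \sp_2^{\times n} \subset \sp_{2n}$, so exactly as in the $(2,\dots,2)$ case of Proposition \ref{A.gen} two conjugates generate $\sp_2^{\times n}$, the adjoint representation of $\sp_{2n}$ restricts multiplicity-freely to this subalgebra, and hence only finitely many intermediate subalgebras exist; since $x^{\Sp_{2n}}$ generates $\sp_{2n}$ and is irreducible, it is not swallowed by their union, giving $e=3$.

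For the remaining cases I would exploit the closure relation of \S\ref{deforming}: to prove a generation bound for $x$ it suffices to prove it for a nilpotent $y$ in $\overline{x^{\Sp_{2n}}}$, using Lemma \ref{gen} with $X = x^{\Sp_{2n}}$ and $Y = y^{\Sp_{2n}}$, since generating $\sp_{2n}$ is an open condition. For case \eqref{C.gen.2}, where $r \ge n$ and we are not in the $(2,\dots,2)$ case, the rank hypothesis forces a part $\ge 3$, so I would deform to a convenient small-orbit representative (e.g. partition $(3,2,\dots,2,1,\dots)$ or $(4,2,\dots,2)$ chosen to be symplectic) and use induction on $n$, embedding a copy of $\sp_{2(n-1)}$ produced by two conjugates and then arguing that the quasi-regular subalgebra generated acts irreducibly on the natural module and hence equals $\sp_{2n}$ by Remark \ref{maxrank}. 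The rank condition feeds into Lemma \ref{dc1}'s symplectic analogue to rule out invariant isotropic lines and hyperplanes.

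For case \eqref{C.gen.o}, where $r < n$, the element has many $1$'s in its partition, so I would peel off a $\sp_2$ factor: writing $x'$ for a nilpotent of $\sp_{2(n-1)}$ of rank $r$, induction gives that $2\lceil n/r \rceil$ conjugates generate $\sp_{2(n-1)}$, and the extra conjugates extend this to $\sp_{2n}$ by the same irreducibility-plus-quasi-regularity argument. The factor of $2$ in $2\lceil n/r\rceil$ (versus $\lceil n/(n-\alpha)\rceil$ in type $A$) reflects that one needs both a raising and a lowering root element to reach the full symplectic algebra rather than a one-sided parabolic. The main obstacle I anticipate is the base cases and the verification that the quasi-regular subalgebra one generates is genuinely irreducible on the natural $2n$-dimensional module: unlike type $A$, the symplectic form constrains which closed subsystems can occur, and one must ensure the generated subalgebra stabilizes neither a nondegenerate proper subspace nor a totally isotropic one, so the careful bookkeeping of which root spaces appear — together with a correct symplectic version of Lemma \ref{dc1} controlling fixed lines and hyperplanes — is where the real work lies.
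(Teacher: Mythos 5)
Your overall scaffolding---specializing partitions via closures as in \S\ref{deforming}, extracting quasi-regularity from a strongly regular semisimple element (Example \ref{sr.eg}, Lemma \ref{gen}\eqref{opengen.2}), concluding with Remark \ref{maxrank}, and treating the partition $(2,\ldots,2)$ via the diagonal $\sl_2^{\times n}$ and multiplicity-freeness---matches the paper, and your case \eqref{C.gen.3} is essentially complete. But in case \eqref{C.gen.o} your induction has a concrete counting failure. You peel off an $\sp_2$ factor, so induction gives that $2\lceil (n-1)/r\rceil$ conjugates generate an $\sp_{2n-2}$, and then ``the extra conjugates extend this to $\sp_{2n}$.'' Whenever $\lceil n/r\rceil = \lceil (n-1)/r\rceil$ (e.g.\ $n=5$, $r=3$, where $e=4$ for both $\sp_8$ and $\sp_{10}$) there are \emph{no} extra conjugates, and your construction produces only an $\sp_{2n-2}$ acting trivially on a nondegenerate $2$-space. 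The paper's mechanism is different and is the idea you are missing: after reducing to partition $(2^r,1^{2n-2r})$, it splits $x$ \emph{itself} into two diagonal blocks with partitions $(2,1^{2v})$ and $(2^{r-1},1^{2n-2r-2v})$, so that the \emph{same} $e$ conjugates simultaneously generate $\sp_e \times \sp_{2n-e}$, each conjugate contributing to both factors at once; a second, different decomposition $(2^r,1^{2n-2r-2}) \oplus (1^2)$ yields from $e$ conjugates a subalgebra with composition factors of sizes $1,1,2n-2$ on the natural module. Since the two lists of composition-factor dimensions are disjoint and the relevant conditions are open (Lemma \ref{gen}), generically $e$ conjugates generate an irreducible quasi-regular subalgebra, and Remark \ref{maxrank} finishes. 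You correctly flag that excluding invariant nondegenerate subspaces ``is where the real work lies,'' but you supply no device for it; this comparison of two incompatible block decompositions is that device, and a symplectic analogue of Lemma \ref{dc1} (which only controls lines and hyperplanes) cannot substitute for it.

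Two further gaps. First, your induction in case \eqref{C.gen.o} needs the base case $r=1$, where $e=2n$ conjugates of a root element are required; this does not follow from peeling, the paper imports it from \cite{CSUW}, and it is an essential ingredient for the block $(2,1^{2v})$ above. Second, in case \eqref{C.gen.2} your candidate representative $(3,2,\ldots,2,1,\ldots)$ with a single $3$ and a single $1$ is not symplectic (odd parts must have even multiplicity); the paper deforms instead to $(3,3,2,\ldots,2,1,1)$ and then still must do genuine work: the base cases $\sp_4$ and $\sp_6$ are handled by ad hoc composition-factor arguments (e.g.\ for $\sp_6$, ruling out an invariant decomposition into nondegenerate $4$- and $2$-dimensional pieces because $x$ would then kill the $2$-space), and for $2n\ge 8$ one must exhibit two conjugates not preserving a $2$-space by generating, inside the stabilizer of a singular line, the Levi subalgebra together with the central root subalgebra. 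Your sketch of case \eqref{C.gen.2} (``two conjugates give $\sp_{2(n-1)}$, then irreducibility plus quasi-regularity'') skips exactly these steps, which constitute most of the paper's proof.
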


\begin{proof}
The conjugacy class of $x$ is determined by its Jordan form, which corresponds to a partition $(p_1, \ldots, p_\alpha)$ of $2n$ with $p_1 \ge p_2 \ge \cdots \ge p_\alpha$ such that odd numbers appear with even multiplicity.  Note that $\sp_2 = \sl_2$, so the $n = 1$ case holds by Example \ref{regular.A}.

By specialization (replacing $x$ with an element of $\overline{x^{\Sp_{2n}}}$ as in \S\ref{deforming}), we may replace in the partition of $x$
\begin{equation} \label{C.gen.eq2}
(2s+2,1,1) \leadsto (s+1, s+1, 2) \  \text{or} \ (2s+1, 2s+1, 1, 1) \leadsto (2s, 2s, 2, 2) \text{\ for $s \ge 2$}
\end{equation}
without changing the rank $r$ of $x$ nor whether the partition is $(2, \ldots, 2)$.  In this way, we may assume that $p_\alpha \ge 2$ or $p_1 \le 4$.

\subsubsection*{Case \eqref{C.gen.3}} Suppose that $x$ has partition $(2, 2, \ldots, 2)$.  Two conjugates of $x$ suffice to generate a copy of $\sl_2^{\times n} \subset \sp_{2n}$, and this contains a regular semisimple element of $\sp_{2n}$.  Furthermore, the natural representation of $\sp_{2n}$ is multiplicity-free for $\sl_2^{\times n}$, so one further conjugate suffices to produce a subalgebra that is irreducible on the natural module.  Appealing to Remark \ref{maxrank}, the claim follows in this case.

\subsubsection*{Case $\sp_4$} For the case $n = 2$, it remains to consider $x$ with partition $(4)$, i.e., a regular nilpotent.  A pair of generic conjugates generates an irreducible subalgebra. By passing to $(2,2)$, we see it also generically contains an element as in \eqref{sr}, whence the result. 

\subsubsection*{Case $\sp_6$}
Suppose $x \in \sp_6$; it suffices to assume that $x$ has rank at least $3$
and $p_1 \ge 3$.  We want to show that two conjugates of $x$ can generate.
By passing to closures, it suffices to assume that $x$ is nilpotent with partition 
$(4,1,1)$.   As in \eqref{C.gen.eq2}, the closure of the class of $x$ contains the class
corresponding to the partition $(2,2,2)$.   Since two conjugates of the latter 
can generate an $\sl_2 \times \sl_2 \times \sl_2$, we see via Lemma \ref{gen}\eqref{opengen.2} that generically two
conjugates of $x$ generate a Lie algebra containing a strongly regular semisimple
element and so a quasi-regular algebra.

By the $\sp_4$ case, we see that generically the largest composition factor of
the algebra generated by two conjugates of $x$ is at least $4$-dimensional and, by the paragraph
above, the smallest is at least $2$-dimensional.   Thus, for  generic $y \in x^G$, the subalgebra $\langle x, y \rangle$  generated by $x$ and $y$
is either irreducible or the module is a direct sum of nondegenerate spaces of dimension
$4$ and $2$.  However, this would imply that $x$ and $y$ would be trivial on the two
dimensional space, a contradiction.   Thus, a generic pair of conjugates of $x$ and $y$
generates an irreducible quasi-regular subalgebra.   Since we are in characteristic
different from $2$, this implies that generically $\langle x, y \rangle = \sp_6$
as required.

\subsubsection*{Case $2n \ge 8$ and $x$ has partition $(3, 3, 2, \ldots, 2, 1, 1)$}  Suppose now that $x$ has partition $(3, 3, 2, \ldots, 2, 1, 1)$ so $r = n$.  
We know that two conjugates of $x$ do not generically fix a $1$-sapce.
 By induction, two conjugates can generate $\sp_{2n-2} \times \sp_2$ and in particular generically the algebra contains a strongly regular element as defined in Example \ref{sr.eg}.
If $n$ is even, we can also generate $\sl_n$ and so the smallest composition factor has dimension $n$, whence the algebra is generically
irreducible and the result follows.  In either case ($n$ even or odd), we see that by induction
one can generate the subalgebra of the stabilizer of a singular $1$-space containing
the Levi subalgebra as well as the central root subalgebra.   This shows that  there
exist a pair of 
a conjugates of $x$  that do not preserve a $2$-space.    Thus, generically
the smallest composition for the algebra generated by two conjugates of $x$
is at least four dimensional.  Since generically there is a composition factor of 
dimension at least $2n-2$, this implies that generically two conugates of 
$x$ generate a quasi-regular irreducible subalgebra and so the full algebra.

\subsubsection*{Case $r \ge n$} We now consider the case where $r \ge n$ (and $2n \ge 8$).

If $p_\alpha \ge 2$, then, as $\alpha = 2n - r \le n$ and we are not in case \eqref{C.gen.3}, we may replace $2s \leadsto (s,s)$ for $s \ge 3$, $(s,s) \leadsto (s-1, s-1, 1, 1)$ for $s \ge 4$, or $(4, 2) \leadsto (3, 3)$ as long as we retain the property that $\rank x \ge n$.  In this way, we may assume that $p_\alpha \le 1$ or $p_1 \le 3$.

So suppose $p_\alpha = 1$, in which case we may assume that $p_1 \le 4$.  We may replace $(4, 4, 1, 1) \leadsto (3, 3, 2, 2)$, $(4, 2) \leadsto (3, 3)$, or $(4,3,3,1,1) \leadsto (3,3,2,2,2)$ without changing the rank of $x$.  Repeating these reductions and those in the previous paragraph, we are reduced to considering partitions $(4, 1, \ldots, 1)$ of rank 3 (excluded because $r \ge n \ge 4$) or $p_1 = 3$.

If there are at least four 3's, we substitute $(3^4, 1^2) \leadsto (3^2, 2^3, 1^2)$ if $p_\alpha = 1$ or $(3^4) \leadsto (3^2, 2^3)$ if $p_\alpha > 1$.  Thus we may assume that $x$ has partition $(3^2, 2^{r-4}, 1^t)$.  As $2r \ge 2n = 2r - 2 + t$, we find that $x$ has partition $(3^2, 2^{r-4}, 1^2)$ with $r = n$ (in which case the proposition has already been proved) or partition $(3^2, 2^{r-4}$) with $r = n + 1$, which specializes to the previous case.

\subsubsection*{Case $r < n$}  Now suppose that $x$ has rank $r < n$, so in particular $p_\alpha = 1$ and we may assume that $p_1 \le 4$.  Specializing as in \eqref{C.gen.eq2} also with $s = 1$, we may assume that 
$x$ has partition $(2^r, 1^{2n-2r})$.
If $r = 1$, then $2n$ conjugates suffice to generate $\sp_{2n}$ by, for example, \cite{CSUW}.  So assume $r \ge 2$.

Clearly, $n/r \le n/2 < n-r$, so there are at least $2v+2$ 1-by-1 Jordan blocks in $x$ for $e := 2\lceil n/r \rceil = 2v + 2$.  We then subdivide $x$ into two blocks on the diagonal, with partitions $(2, 1^{2v})$ and $(2^{r-1}, 1^{2n-2r-2v})$.  By the $r= 1$ case, $e$ generic conjugates of the first generate an $\sp_e$ subalgebra and by induction $\max \{ 3, 2\lceil (n - v-  1)/ (r-1) \rceil \}$ conjugates of the second generate an $\sp_{2n-e}$ subalgebra.  As $2n \le re$, we have $(n-v-1)/(r-1) \le n/r$, and the max in the preceding sentence is at most $e$.  Note that $\sp_e \times \sp_{2n-e}$ contains a regular semisimple element of $\sp_{2n}$ and the natural module has composition factors of size $e$, $2n-e$.

Alternatively, we may subdivide $x$ into blocks with partitions $(2^r, 1^{2n-2r-2})$ and $(1^2)$.  By induction, $e$ generic conjugates of this element give an $\sp_{2n-2}$ subalgebra, with composition factors of size 1, 1, $2n-2$.  As this list does not meet the list of composition factors from the previous paragraph, the generic subalgebra generated by $e$ conjugates acts irreducibly on the natural module, and we are done via an application of Remark \ref{maxrank}.
\end{proof}

\begin{cor} \label{C.odd}
Assume $\car k \ne 2$.  For nonzero nilpotent or noncentral semisimple $x \in \sp_{2n}$ with $n \ge 1$, there exist $e > 0$ and elements $x_1 \ldots, x_e \in x^{\Sp_{2n}}$ that generate $\sp_{2n}$ such that $e \cdot \dim x^{\Sp_{2n}} \le 6n^2$. 
\end{cor}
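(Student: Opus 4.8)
The plan is to verify the inequality $e \cdot \dim x^{\Sp_{2n}} \le 6n^2$ separately for $x$ nilpotent and for $x$ noncentral semisimple, in both cases feeding off the generation statement of Proposition~\ref{C.gen}. For the semisimple case I would deform $x$ to a nilpotent of the same orbit dimension and transfer the generation via Lemma~\ref{gen}. Throughout I use $\dim \sp_{2n} = 2n^2 + n$ together with the orbit-dimension formula for a nilpotent with partition $\lambda$ of $2n$, namely, writing $\lambda^{\T}$ for the transpose partition,
\[
\dim x^{\Sp_{2n}} = 2n^2 + n - \tfrac12\Bigl(\textstyle\sum_j (\lambda^{\T}_j)^2 + \#\{i : \lambda_i \text{ odd}\}\Bigr),
\]
which I would record as a preliminary lemma (it restricts correctly on $\sp_2 = \sl_2$).

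\textbf{Nilpotent case.} Let $x$ have rank $r$ and run through the three cases of Proposition~\ref{C.gen}. In case~\eqref{C.gen.3} the partition is $(2^n)$, so $\lambda^{\T} = (n,n)$ and the formula gives $\dim x^{\Sp_{2n}} = n^2 + n$; with $e = 3$ this yields $3n^2 + 3n \le 6n^2$ for $n \ge 1$. In case~\eqref{C.gen.2} I would use only the crude bound $\dim x^{\Sp_{2n}} \le 2n^2$ (the regular orbit being the largest), so $e\cdot\dim x^{\Sp_{2n}} \le 4n^2$. Case~\eqref{C.gen.o} is the delicate one: here $e = 2\lceil n/r\rceil$ with $r < n$, and I would first show that among rank-$r$ nilpotents the orbit dimension is maximized by $(r+1,1^{2n-r-1})$ when $r$ is odd and by $(r,2,1^{2n-r-2})$ when $r$ is even, giving the uniform bound $\dim x^{\Sp_{2n}} \le 2nr - \tfrac12 r^2 + \tfrac12$. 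The remaining inequality $2\lceil n/r\rceil\,(2nr - \tfrac12 r^2 + \tfrac12) \le 6n^2$ I would verify by grouping $r$ according to $m := \lceil n/r\rceil \ge 2$: for $m \ge 3$ the estimate $r < n/(m-1)$ gives a bound $\tfrac{m(4m-5)}{(m-1)^2}n^2 + m \le \tfrac{21}{4}n^2 + m < 6n^2$, while the tight case $m = 2$ must be checked at the actual endpoint $r = n-1$, where $\dim x^{\Sp_{2n}} \le \tfrac32 n^2 - n$ and $e = 4$ produce exactly $6n^2 - 4n$.

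\textbf{Semisimple case.} For a noncentral semisimple $x$ I would pass to a nilpotent by deformation, as in Lemma~\ref{deform} but now inside $\Sp_{2n}$: ordering the eigenvalue pairs $\pm a_i$ and coupling adjacent eigenspaces by a nilpotent that respects the symplectic form produces a nonzero nilpotent $y$ in the closure of $x^{\Gm\Sp_{2n}}$ with $\dim y^{\Sp_{2n}} = \dim x^{\Sp_{2n}}$. By the nilpotent case there are $e$ conjugates of $y$ generating $\sp_{2n} = [\sp_{2n},\sp_{2n}]$, so Corollary~\ref{gen.cor} applied with $X = x^{\Gm\Sp_{2n}}$ and $Y = y^{\Sp_{2n}} \subseteq \overline{X}$ shows that $e$ generic elements of $x^{\Gm\Sp_{2n}}$ generate $\sp_{2n}$. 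Finally I would remove the scalars: if $\lambda_1 x_1, \ldots, \lambda_e x_e$ generate $\sp_{2n}$ with $x_i \in x^{\Sp_{2n}}$ and $\lambda_i \in \kx$, then each $\lambda_i x_i$ lies in the subalgebra generated by the $x_i$, so the conjugates $x_1, \ldots, x_e$ already generate $\sp_{2n}$; and $e\cdot \dim x^{\Sp_{2n}} = e\cdot\dim y^{\Sp_{2n}} \le 6n^2$ by the nilpotent bound.

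\textbf{Main obstacles.} I expect the two genuinely delicate points to be the symplectic deformation and the arithmetic of case~\eqref{C.gen.o}. The deformation is the analogue of Lemma~\ref{deform}, which the excerpt records only for $\GL_n$ and $\SL_n$; carrying it out for $\Sp_{2n}$ requires choosing the degenerate nilpotent $y$ inside $\sp_{2n}$ (coupling through the zero eigenspace and respecting the pairing $\pm a_i$) and checking that $\dim Z_{\Sp_{2n}}(y) = \dim Z_{\Sp_{2n}}(x)$ for the reductive centralizer $\Sp_{2c} \times \prod_i \GL_{m_i}$ of $x$. The case~\eqref{C.gen.o} estimate is tight — the constant $6$ is forced by $m = 2$ — so the argument cannot afford the crude orbit bound $2nr$ and must use the sharper $2nr - \tfrac12 r^2$ together with the integrality $r \le n-1$.
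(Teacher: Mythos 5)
Your proposal is correct, and it splits into a half that matches the paper and a half that genuinely diverges. In the nilpotent case you are essentially reconstructing the paper's argument: the paper also reduces, for fixed rank $r < n$, to the largest rank-$r$ classes, and its partitions $(2s, 1^{\alpha-1})$ and $(2s,2,1^{\alpha-2})$ are exactly your $(r+1,1^{2n-r-1})$ and $(r,2,1^{2n-r-2})$, while its Liebeck--Seitz estimate $\dim C_{\Sp_{2n}}(y) \ge n + (\alpha^2-1)/2$ is precisely your uniform bound $2nr - \tfrac12 r^2 + \tfrac12$; only the closing arithmetic differs (you split on $m = \lceil n/r \rceil$, the paper uses $\lceil n/(2n-\alpha)\rceil < (3n-\alpha)/(2n-\alpha)$ once), and you correctly see that integrality $r \le n-1$ is what saves the tight case $m=2$. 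The semisimple case is where you depart: the paper does \emph{not} prove a dimension-preserving symplectic analogue of Lemma \ref{deform}. It builds an explicit nilpotent $y \in \overline{x^{\Gm \Sp_{2n}}}$ only to import the generation count $e$, and bounds $\dim x^{\Sp_{2n}}$ separately via $\dim C_{\Sp_{2n}}(x) \ge \dim \Sp_{2n-\alpha_0} + \alpha_0/2$, getting $(n+\alpha_0)(4n-\alpha_0) \le 6n^2 - n - 1$ when $\alpha_0 < n$ (and $e \le 3$ when $\alpha_0 \ge n$). Your deformation claim is nevertheless true: with $P$ a parabolic whose Levi is $L = Z_{\Sp_{2n}}(x) = \Sp_{2c} \times \prod_i \GL_{m_i}$ and nilradical $\mathfrak{n}$, one has $x + \mathfrak{n} \subseteq x^{\Sp_{2n}}$, hence $\mathfrak{n} \subseteq \overline{x^{\Gm \Sp_{2n}}}$, and a Richardson element $y \in \mathfrak{n}$ satisfies $\dim y^{\Sp_{2n}} = \dim x^{\Sp_{2n}}$ (Richardson's theorem is available for $\Sp_{2n}$ when $\car k \ne 2$). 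Just be aware that the naive transpose recipe of Lemma \ref{deform} can fail symplectically --- for eigenvalues $(a,-a,0,0)$ the $\GL$-transpose is $(3,1)$, which is not a symplectic partition, and the correct degeneration is $(2,2)$ --- so the ``coupling through the zero eigenspace'' you flag is genuinely needed. What each route buys: the paper avoids any new deformation lemma at the cost of a second ad hoc centralizer estimate; yours reduces everything to the nilpotent case, which is cleaner and more systematic but requires proving the symplectic deformation. Your remaining steps --- Corollary \ref{gen.cor} applied to $X = x^{\Gm \Sp_{2n}}$ and the removal of scalars (a subalgebra contains $x_i$ if and only if it contains $\lambda_i x_i$) --- are both sound, noting $[\sp_{2n},\sp_{2n}] = \sp_{2n}$ for $\car k \ne 2$.
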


\begin{proof}
Note that we are done if 3 conjugates of $x$ suffice to generate $\sp_{2n}$, as $\dim x^G \le 2n^2$.  Moreover, the case $n = 1$ holds by Corollary \ref{A.odd}.

Recall that $\alpha(x)$ is the dimension of the largest eigenspace of $x$ (and so for $x$ nilpotent, the rank of $x$
is $2n - \alpha(x)$).

\subsubsection*{Nilpotent case} Suppose that $x$ is nonzero nilpotent and put $e(x)$ for the minimal number of 
conjugates of $x$ needed to generate $\sp_{2n}$.  
We may assume that $e(x) > 3$ and so $r < n$.   In particular, $\alpha(x) > n$. 

We have $e(x) \le 2\lceil \frac{n}{2n-\alpha(x)}\rceil$ by Proposition \ref{C.gen}. To bound $\dim x^G$, we replace $x$ with $y$ such that $\alpha(y) = \alpha(x)$ and $y$ specializes to $x$, i.e., $x$ belongs to the closure of $y^{\Gm G}$.  Then $\sp_{2n}$ is also generated by $2\lceil \frac{n}{2n-\alpha(x)} \rceil$ conjugates of $y$ and $\dim x^G \le \dim y^G$.  The element $x$ is given by a partition $(p_1, \ldots, p_\alpha)$ as in the proof of Proposition \ref{C.gen}.  

We claim that $y$ can be taken to have partition $(2s, 2, 1^{\alpha(x) - 2})$ or $(2s, 1^{\alpha(x) -  1})$.  Indeed, let $I := \{ i \mid \text{$i > 1$ and $p_i > 2$} \}$.  Then the element $y$ with partition $(p'_1, p'_2, \ldots, p'_\alpha)$ where
\[
p'_i = \begin{cases}
2 & \text{if $i \in I$} \\
p_i & \text{if $i > 1$ and $i \not\in I$} \\
p_1 + \sum_{i \in I} (p_i - 2) & \text{if $i = 1$}
\end{cases}
\]
specializes to $x$, compare \cite[3.10]{Hesselink:sing} or \cite[6.2.5]{CMcG}.  Replacing $x$ with $y$ we find an element with partition $(2s, 2^r, 1^{\alpha(x) - r - 1})$ for some $s \ge 1$ and some $r$.  If $r \ge 2$ and $s > 1$, then we may replace $x$ with an element with partition $(2s+2, 2^{r-2}, 1^{\alpha(x)-r+1})$ and repeating this procedure gives the claim.  

The formula for $\dim C_{\Sp_{2n}(k)}(y)$ in \cite[p.~39]{LiebeckSeitz} gives that it is at least $n + (\alpha(x)^2 - 1)/2$.  Applying $\lceil n/(2n-\alpha(x)) \rceil < (3n - \alpha(x))/(2n-\alpha(x))$, we find that $e(x) \cdot \dim x^G < 6n^2 + \alpha(x) (n - \alpha(x)) + 1/(2n-\alpha(x))$.  As $n - \alpha(x)$ is negative, we have verified the required inequality for $x$ nilpotent.

\subsubsection*{Semisimple case}
We may assume $x$ is diagonal.  Put $\alpha_0$ for the number of nonzero entries in $x$; we will construct a nilpotent $y$ in the closure of $x^{\Gm \Sp_{2n}}$.  Recall that the diagonal of $x$ consists of pairs $(t, -t)$ with $t \in k$.

Suppose first that $\alpha_0 \ge n$.  We pick $y$ to be block diagonal as follows.  For a 4-by-4 block with entries $(0,0,t,-t)$ for some $t \in \kx$, we make a 4-by-4 block in $y$ in the same location, where the 2-by-2 block in the upper right corner is generic for $\sp_4$.  As $\alpha_0 \ge n$, by permuting the entries in $x$ we may assume that all pairs $(0,0)$ on the diagonal of $x$ are immediately followed by a $(t, -t)$ with $t \ne 0$.  Thus, it remains to specify the diagonal blocks in $y$ at the locations corresponding to the remaining 2-by-2 blocks $(t, -t)$ for $t \ne 0$ in $x$, for which we take $y$ to have a 1 in upper right corner.  We have constructed a nilpotent $y$ with $\rank y \ge n$, so $e(x) \le e(y) \le 3$ by Prop.~\ref{C.gen}, and $e(x) \cdot \dim x^{\Sp_{2n}} \le 6n^2$.

Now suppose $\alpha_0 < n$.  Let $x_0$ be a $2\alpha_0$-by-$2\alpha_0$ submatrix consisting of all the nonzero diagonal entries in $x$ together with $\alpha_0$ zero entries.  Take $y_0$ to be the nilpotent element constructed from $x_0$ as in the preceding paragraph, and extend it by zeros to obtain a nilpotent $y$ with $\alpha(y) = 2n-\alpha_0 > n$.  Then $y$ is in the closure of $x^{\Gm \Sp_{2n}}$ and $e(y) \le 2 \lceil n/\alpha_0 \rceil < 2(n+\alpha_0)/\alpha_0$.  On the other hand, the centralizer of $x$ has dimension at least $\dim \Sp_{2n-\alpha_0} + \alpha_0/2 = 2n^2 - 2n\alpha_0 + \alpha_0^2/2 + n$.  Thus $\dim x^{\Sp_{2n}} \le 2n\alpha_0 - \alpha_0^2/2$.  In summary,
$e(x) \cdot \dim x^{\Sp_{2n}} < (n+\alpha_0)(4n - \alpha_0) = 4n^2 + 3\alpha_0 n - \alpha_0^2$.  As a function of $\alpha_0$, it is a parabola opening down with max at $\alpha_0 = 1.5n$, so its maximum for $\alpha_0 < n$ is where $\alpha_0 = n-1$, i.e., the max is at most $6n^2 - n -1$.
\end{proof}

%%%%%%%%%%%%%%%%%%%%%%%%%%%%%%%%%%%%%%%%%%%%%%%%%%%%%%%%%%%%%%%%%%%%%%%%
\section{Types $B$ and $D$ with $\car k \ne 2$}

\begin{prop} \label{BD.gen}
Assume $\car k \ne 2$.  For every nonzero nilpotent $x \in \so_{n}$ for $n \ge 5$, $\max \{ 4, \lceil \frac{n}{n-\alpha(x)} \rceil \}$ conjugates of $x$ generate $\so_{n}$.
\end{prop}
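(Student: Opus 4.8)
The plan is to follow the template established for types $A$ and $C$ in Propositions \ref{A.gen} and \ref{C.gen}. Since $x$ is nilpotent, $\alpha(x) = \dim\ker x = n - \rank x$, so writing $r := \rank x$ the target is to generate $\so_n$ with $e := \max\{4, \lceil n/r\rceil\}$ generic conjugates. The conjugacy class of $x$ is determined by its partition of $n$, in which even parts occur with even multiplicity. I would first clear the small cases $n = 5, 6$ via the exceptional isomorphisms $\so_5 \cong \sp_4$ and $\so_6 \cong \sl_4$, transporting the bounds from Propositions \ref{C.gen} and \ref{A.gen}. For $n \ge 7$ I would set up an induction on $n$, using specialization---replacing $x$ by an element of $\overline{x^{\Gm\SO_n}}$ as in Section \ref{deforming}, via the rank-preserving partition moves recorded in \cite[3.10]{Hesselink:sing} (or \cite[\S6.2]{CMcG})---to reduce to a short list of convenient partitions that neither raises $r$ nor forces more than $e$ conjugates.

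The heart of the matter is to show that $e$ generic conjugates generate a subalgebra $\lsub$ that is simultaneously \emph{quasi-regular} and \emph{irreducible} on the natural module $V = k^n$; Remark \ref{maxrank} then forces $\lsub = \so_n$ (using $\car k \ne 2$). For quasi-regularity I would arrange by induction that $\lsub$ contains a smaller classical subalgebra---typically an $\so_{n-k}\times\so_k$ or an $\sl$-type block subalgebra obtained from a decomposition of $V$ into nondegenerate pieces---whose restriction to $V$ is multiplicity-free, so that $\lsub$ contains a strongly regular semisimple element in the sense of Example \ref{sr.eg}; Lemma \ref{gen}\eqref{opengen.2} then promotes this to the generic collection. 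When $r$ is large (so $e = 4$), induction should directly deposit an $\so_{n-k}$ with $k$ small; when $r$ is small, I would subdivide $V$ into nondegenerate blocks and generate each with its share of conjugates, as in the $r < n$ case of Proposition \ref{C.gen}, falling back on \cite{CSUW} for the rank-one blocks.

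For irreducibility I would eliminate both failure modes permitted by Remark \ref{maxrank}. First, $\lsub$ must not stabilize a totally singular subspace: this is the orthogonal counterpart of the ``no fixed line or hyperplane'' conclusion of Lemma \ref{dc1}, and I would prove it by the same incidence-variety dimension count, bounding in terms of $r$ and $n$ how many conjugates can share a common isotropic flag. Second, $\lsub$ must not preserve a proper \emph{nondegenerate} subspace: here I would exhibit two different nondegenerate block subdivisions whose composition-factor dimensions are incompatible, so that no single proper nondegenerate subspace can be invariant, exactly as in the closing paragraph of Proposition \ref{C.gen}.

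I expect irreducibility to be the principal obstacle. In type $A$ it was enough to forbid a fixed line and hyperplane, but the orthogonal geometry forces control of invariant totally singular subspaces of \emph{every} dimension as well as nondegenerate ones, and the constraint that even parts appear with even multiplicity sharply limits which specializations are available (for instance $(2^{n/2})$ is a legal partition only when $4 \mid n$). I anticipate that $\so_7$ and $\so_8$, together with the splitting of certain ``very even'' $D_\ell$-classes into two $\SO_n$-orbits (as opposed to a single $\OO_n$-orbit), will need separate hands-on treatment before the induction can run.
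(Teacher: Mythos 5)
Your skeleton coincides with the paper's: induction on $n$ with base cases $n=5,6$ handled through $\so_5\cong\sp_4$ and $\so_6\cong\sl_4$, specialization via the partition moves of \cite[3.10]{Hesselink:sing}, block subdivision with each block generated by its share of conjugates by induction, a strongly regular semisimple element (Example \ref{sr.eg}, promoted to generic tuples by Lemma \ref{gen}\eqref{opengen.2}) to force quasi-regularity, and Remark \ref{maxrank} to finish. The one genuine divergence is your irreducibility mechanism. You propose proving an orthogonal analogue of Lemma \ref{dc1} that bounds how many conjugates can stabilize a totally singular subspace of \emph{each} dimension; the paper never proves such a lemma, and with reason: the incidence count in Lemma \ref{dc1} is tuned to $\P^{n-1}$, and over higher isotropic Grassmannians $\dim G/P$ grows while the fibre dimension $\dim(X\cap\p)$ is harder to control, so it is far from clear the count closes with $e=\max\{4,\lceil n/r\rceil\}$. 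The paper instead rules out \emph{all} proper invariant subspaces at once --- singular and nondegenerate alike --- by producing two block subdivisions whose composition-factor dimension lists are disjoint (e.g.\ factors of sizes $2t_0e$ and $n-2t_0e$ from the subdivision of $(2^{2t},1^u)$, against an incompatible second list), exactly the device closing Proposition \ref{C.gen}; your ``clash'' step, which you reserve only for nondegenerate subspaces, in fact already does the whole job, so the dc1-style lemma should be dropped rather than attempted. Two smaller points: your fallback on \cite{CSUW} for small-rank blocks is legitimate here since $\car k\ne 2$ (the proof of \ref{C.gen} uses it the same way for $r=1$), whereas the paper instead reduces the case of at most one $1$ in the partition to the explicit partitions $(2^{n/2})$, $(2^{(n-1)/2},1)$, $(3^2,2^{(n-6)/2})$, $(3,2^{(n-3)/2})$, for which two conjugates generate a multiplicity-free product of $\sl_2$'s and $\so_3$'s and three conjugates then suffice; and your anticipated special treatment of $\so_7$, $\so_8$ and of very even classes turns out to be unnecessary --- the two $\SO_n$-orbits of a very even class are exchanged by conjugation by an element of $\OO_n\setminus\SO_n$, an automorphism of $\so_n$, so generation statements are insensitive to the split, and the moves $(2s+1,1)\leadsto(s+1,s+1)$ and $(s,s,1,1)\leadsto(s-1,s-1,2,2)$ let the induction run uniformly for all $n\ge 7$.
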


\begin{proof}
The $O_n$-conjugacy class of $x$ is determined by its Jordan form, which is given by a partition $(p_1, \ldots, p_\alpha)$ of $n$ where even values occur with even multiplicity.  We go by induction on $n$.  As $\so_5 \cong \sp_4$, the $n = 5$ case is covered by Proposition \ref{C.gen}, which gives 4 as the largest number of conjugates needed to generate.  For $n = 6$, $\so_6 \cong \sl_4$, and this case is handled by Proposition \ref{A.gen}.  So assume $n \ge 7$.

Suppose first that the number $\delta$ of 1's in the partition for $x$ is at most 1.  Then we can find an element $y$ in the closure of $x^{\SO_n}$ with partition 
\begin{enumerate}
	\renewcommand{\theenumi}{\roman{enumi}}
	\item \label{BD.gen.0} $(2^{n/2})$ if $n \equiv 0 \bmod 4$;
	\item \label{BD.gen.1} $(2^{(n-1)/2}, 1)$ if $n \equiv 1 \bmod 4$; 
	\item \label{BD.gen.2} $(3^2, 2^{(n-6)/2})$ if $n \equiv 2 \bmod 4$; or
	\item \label{BD.gen.3} $(3, 2^{(n-3)/2})$ if $n \equiv 3 \bmod 4$.
\end{enumerate}
To see this, we specialize $(2s, 2s) \leadsto (s^4)$ for $s \ge 2$; $s \leadsto (s-4,2,2)$ for odd $s \ge 7$; or $(s,1) \leadsto ((s+1)/2, (s+1)/2))$ for odd $s \ge 3$ and $\delta = 1$.  Together with trivial reductions such as $(5^2) \leadsto (3^2, 2^2)$ brings us to a partition of the form $(3^b, 2^c, 1^\delta)$ for some $b \le 3$ and some $c$ from which the claim quickly follows.
For such a $y$, 2 conjugates suffice to generate a copy of $\sl_2^{\times n/2}$, $\sl_2^{\times (n-1)/2}$, $\so_3 \times \so_3 \times \sl_2^{\times (n-6)/2}$, or $\so_3 \times \sl_2^{(n-3)/2}$ respectively.  As in the proof of Proposition \ref{C.gen}, it follows that 3 conjugates are enough to generate $\so_n$.

Now suppose there are more 1's in the partition for $x$.  We specialize using 
\[
(2s+1, 1) \leadsto (s+1, s+1) \text{\ for $s \ge 1$} \quad \text{and} \quad (s,s,1,1) \leadsto (s-1,s-1,2,2) \text{\ for $s \ge 4$.}
\]
If, after a step in this specialization process, we find that only 0 or 1 1-by-1 blocks remain, we are done by the preceding paragraph.  Therefore, we may assume that $x$ has partition $(2^{2t}, 1^u)$ for $u \ge 2$.

Write out $t = 2t_0 + \delta$ for $\delta = 0$ or 1, and set $v = 2t_0 \lceil \frac{u}{2t} \rceil$.  We can view $x$ as block diagonal where the first block has partition $(2^{2t_0}, 1^v)$ and the second has partition $(2^{2t_0 + 2\delta}, 1^{u-v})$.  For the first block, 
\[
e := 2+\left\lceil \frac{v}{2t_0} \right\rceil = 2+ \left\lceil \frac{u}{2t} \right\rceil
\]
conjugates suffice to generate an $\so_{2t_0 e}$ subalgebra by induction on $n$.  For the second block, we note that
\[
\frac{u-v}{2t_0 + 2\delta} \le \frac{u}{2t},
\]
so, by induction, $e$ conjugates suffice to generate an $\so_{n-2t_0e}$ subalgebra.  Because $\so_{2t_0e} \times \so_{n-2t_0e}$ contains a regular semisimple element and the natural module has composition factors of size $2t_0e$ and $n-2t_0e$, we conclude as in the proof of Proposition \ref{C.gen} that $e$ conjugates of $x$ suffice to generate $\so_n$.
\end{proof}

\begin{cor} \label{BD.odd}
Assume $p := \car k \ne 2$.  For noncentral $x \in \so_n$ with $n \ge 5$ such that $x^{[p]} \in \{ 0, x \}$, there exist $e > 0$ and elements $x_1, \ldots, x_e \in x^{\SO_n}$ that generate $\so_n$  such that $e \cdot \dim x^{\SO_n} \le 2(n-1)^2$.
\end{cor}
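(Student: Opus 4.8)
The plan is to mirror the structure of Corollaries~\ref{A.odd} and \ref{C.odd}, treating nilpotent and noncentral toral $x$ in parallel through the single quantity $r := n - \alpha(x)$, where $\alpha(x)$ is the dimension of the largest eigenspace of $x$ (so $r$ is the rank when $x$ is nilpotent, and the codimension of the largest eigenspace in general). The whole corollary reduces to two estimates: a \emph{generation bound} $e(x) \le \max\{4, \lceil n/r\rceil\}$, and an \emph{orbit bound} $\dim x^{\SO_n} \le r(n-1) - \tfrac12 r^2$. Granting these, I would conclude by an elementary two-case computation. When $r \ge n/4$ we have $\lceil n/r\rceil \le 4$, so $e(x)\cdot\dim x^{\SO_n} \le 4r(n-1) - 2r^2$; this is increasing in $r$ on $[n/4,\,n-1]$, hence at most its value $2(n-1)^2$ at $r = n-1$ (tight for the regular nilpotent in type $B$). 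When $r < n/4$ we instead bound $e(x)\cdot\dim x^{\SO_n} < \tfrac{n+r}{r}\cdot r(n-1) = (n+r)(n-1) < \tfrac54 n^2$, which is at most $2(n-1)^2$ for all $n \ge 5$.

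For the generation bound, the nilpotent case is exactly Proposition~\ref{BD.gen}, since there $r = \rank x = n - \alpha(x)$. For toral $x$ I would produce a nilpotent $y$ in the closure of $x^{\Gm \SO_n}$ of rank $r = n - \alpha(x)$, as an orthogonal analog of Lemma~\ref{deform}: after conjugating so that the largest eigenspace is standard, degenerate each eigenpair $\{t_i, -t_i\}$ (hyperbolically paired by the form) to a nilpotent, absorbing the ambient zero eigenvectors to realize the full rank $r$ while preserving the bilinear form. Proposition~\ref{BD.gen} then generates $\so_n$ from $\max\{4, \lceil n/r\rceil\}$ conjugates of $y$, and since $y^{\SO_n} \subseteq \overline{x^{\Gm \SO_n}}$, Corollary~\ref{gen.cor} transfers this to $x$: the same number of conjugates of $x$ generate a subalgebra containing $[\so_n, \so_n]$, which equals $\so_n$ because $\z(\so_n) = 0$ when $\car k \ne 2$ (Lemma~\ref{quo}\eqref{quo.der}).

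For the orbit bound I would use the centralizer-dimension formula. For nilpotent $x$ with partition $\lambda$, the identity $\dim C_{\so_n}(x) = \tfrac12\bigl(\sum_j (\lambda'_j)^2 - \#\{j : \lambda_j \text{ odd}\}\bigr)$ shows that, among nilpotents of a fixed rank $r$ (equivalently a fixed number $\alpha = n - r$ of Jordan blocks), the orbit of largest dimension is the one with a single block $(r+1, 1^{n-r-1})$, of dimension exactly $r(n-1) - \tfrac12 r^2$. For toral $x$ one has $\dim C_{\SO_n}(x) = \binom{d_0}{2} + \sum_i m_i^2$, where $d_0$ and $m_i$ are the multiplicities of the eigenvalue $0$ and of the pairs $\pm t_i$; minimizing this for a fixed largest eigenspace $\alpha(x)$ (that is, taking the remaining eigenvalues distinct) yields the same value $\tfrac12\bigl((n-1)^2 - (\alpha(x)-1)^2\bigr) = r(n-1) - \tfrac12 r^2$. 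A short optimization confirms that this is extremal in both cases.

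The main obstacle is the toral degeneration in the second paragraph: realizing a nilpotent of the prescribed rank $r = n - \alpha(x)$ inside $\so_n$ is delicate, because the invariant form, together with the type-$D$ requirement that even parts of a partition occur with even multiplicity, obstructs the naive block construction of Lemma~\ref{deform}. The saving grace is that the orbit bound is tight only at $r = n-1$ (the regular case, in type $B$), which lies in the range $e(x) = 4$ and is handled purely by the orbit-dimension estimate together with generation from the regular nilpotent; in the range $e(x) > 4$ there is ample slack, so a rank one or two short of $r$, if forced by parity, still yields the inequality. I would therefore arrange the degeneration so that its precision is only required where slack is available. Finally, I would dispose of the base cases $n = 5, 6$ separately via the exceptional isogenies $\so_5 \cong \sp_4$ and $\so_6 \cong \sl_4$ and Corollaries~\ref{C.odd} and \ref{A.odd}.
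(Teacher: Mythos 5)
Your proposal is correct and takes essentially the same route as the paper, whose proof of Corollary~\ref{BD.odd} is exactly your skeleton: argue as in Corollary~\ref{C.odd}, using Proposition~\ref{BD.gen} for generation, deforming toral elements to nilpotents in the closure of $x^{\Gm \SO_n}$, and bounding orbit dimensions via centralizer formulas before optimizing in the codimension $r = n - \alpha(x)$ (with the $e \le 4$ regime disposed of by the maximal-orbit bound, tight for the regular class in type $B$). The parity obstruction you hedge against in your final paragraph never actually bites: whenever $e(x) > 4$ one has $\alpha(x) > 3n/4$, so the largest eigenspace of a toral $x$ is its zero eigenspace and $r$ equals the (even) number of nonzero eigenvalues, whence a nilpotent of rank exactly $r$ with partition $(2^r, 1^{n-2r})$ lies in the closure --- this is precisely the block construction the paper imports from Corollary~\ref{C.odd} --- so your fallback slack argument (which, as stated, would be unreliable for very small $r$) is never needed.
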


\begin{proof}
As $\car k \ne 2$, we identify $\spin_n$ with $\so_n$ via the differential of the covering map $\Spin_n \to \SO_n$.  We argue as in the proof of Corollary \ref{C.odd}, replacing $\sp_{2n}$ with $\so_n$ and references to Proposition \ref{C.gen} with references to \ref{BD.gen}.  We may assume that $e(x) > 4$, for otherwise $e(x) \cdot x^{\SO_n} \le 4 \cdot \left( \binom{n}{2} - \lfloor n/2 \rfloor \right) \le 2(n-1)^2$.

\subsubsection*{Nilpotent case} Suppose that $x$ is nonzero nilpotent.  We have $e(x) \le \lceil n/(n-\alpha) \rceil$ and in particular we may assume that $\alpha > \frac23 n$.  Recall that the $O_n$-orbit of $x$ is determined by a partition $(p_1, \ldots, p_\alpha)$ of $n$, where even numbers appear with even multiplicity.  As in the proof of Corollary \ref{C.odd}, we may replace $x$ with $y$ with partition $(p_1 + \sum_{i=2}^\alpha (p_i-1), 1^{\alpha-1})$.  This element has $\alpha(y) = \alpha(x)$ and orbit of size $\binom{n}{2} - \binom{\alpha}{2}$.  As $e(x) < (2n-\alpha)/(n-\alpha)$, it follows that $e(x) \cdot x^{\SO_n} < \frac12 (2n-\alpha)(n+\alpha-1)$.  The upper bound is maximized for $\frac23 n \le \alpha \le n$ at the lower bound, where it is 
$\frac29 n(5n-3) < 2(n-1)^2$.

\subsubsection*{Semisimple case} Suppose that $x$ is noncentral diagonal in $\so_n$.  

Suppose first that $n$ is even.  If $\alpha_0 \ge n/2$, then pick $y$ as  in Corollary \ref{C.odd}, so $\alpha(y) = n/2$, $e(y) \le 4$, and we are done.  If $\alpha_0 < n/2$, we perform the same construction as in the last paragraph of the proof of \ref{C.odd} to obtain $y$ with $\alpha(y) = n - \alpha_0$, so $e(y) \le \max \{ 4, \lceil n/\alpha_0 \rceil \}$; suppose $\lceil n / \alpha_0 \rceil > 4$, i.e., $n/\alpha_0 > 4$, i.e., $\alpha_0 < n/4$.  The orbit of $x$ has dimension at least $\dim \SO_n - \dim \SO_{n-\alpha_0} - \alpha_0 / 2$, whence $e(x) \cdot \dim x^{\SO_n} < (n+\alpha_0) (n-\alpha_0/2 - 1)$, where the right side is maximized at $\alpha_0 = n/4$ and again we verify  that the upper bound is at most $2(n-1)^2$.

When $n$ is odd, we view $x$ as lying in the image of $\so_{n-1} \hookrightarrow \so_n$ and take $y$ in this same image as constructed by the method in the previous paragraph.  Computations identical to the ones just performed again verify $e(x) \cdot \dim x^{\SO_n} < 2(n-1)^2$.
\end{proof}

%%%%%%%%%%%%%%%%%%%%%%%%%%%%%%%%%%%%%%%%%%%%%%%%%%%%%%%%%%%%%%%%%%%%%%%%%
\section{Type $D$ with $\car k = 2$}

% We now consider a Lie algebra of type $D_n$ for $n \ge 4$ over a field $k$ of characteristic 2.  
 
 \subsection*{Concrete descriptions} 
 For sake of precision, we first give concrete descriptions of the groups and Lie algebras associated with a nondegenerate quadratic from $q$ on a vector space $V$ of even dimension $2n$ over a field $k$ (of any characteristic).  The orthogonal group $\OO(q)$ is the sub-group-scheme of $\GL(V)$ consisting of elements that preserve $q$, i.e., such that $q(gv) = q(v)$ for all $v \in V\otimes R$ for every commutative $k$-algebra $R$; the special orthogonal group $\SO(q)$ is the kernel of the Dickson invariant $\OO(q) \to \Z/2$; and the groups of  similarities $\GO(q)$ and proper similarities $\SGO(q)$ are the sub-group-schemes of $\GL(V)$ generated by the scalar transformations and $\OO(q)$ or $\SO(q)$ respectively, see for example \cite[\S12 and p.~348]{KMRT} or  \cite[Ch.~IV]{Knus:qf}.  For $n \ge 3$, the group $\SO(q)$ is semisimple of type $D_n$, but neither simply connected nor adjoint.
 
 The statement that $q$ is nodegenerate means that the bilinear form $b$ on $V$ defined by $b(v,v') := q(v+v') - q(v) - q(v')$ is nondegenerate.  Viewing the Lie algebra of a group $G$ over $k$ as the kernel of the homomorphism $G(k[\eps]) \to G(k)$ induced by the map $\eps \mapsto 0$ from the dual numbers $k[\eps]$  to $k$, one finds that $\oo(q)$ is the set of $x \in \gl(V)$ such that $b(xv,v) = 0$ for all $v \in V$.  Since $\OO(q)/\SO(q) \cong \Z/2$, $\so(q) = \oo(q)$.  As $b$ is nondegenerate, the equation $b(Tv,v') = b(v, \sigma(T)v')$ defines an involution $\sigma$ on $\End(V)$.  The set of alternating elements $\{ T - \sigma(T) \mid T \in \End(V) \}$ is contained in $\so(q)$ and also has dimension $2n^2 - n$ \cite[2.6]{KMRT}, therefore the two subspaces are the same.   The Lie algebra $\go(q)$ of $\GO(q)$ and $\SGO(q)$ is the set of elements $x \in \gl(V)$ such that there exists a $\mu_x \in k$ so that $b(xv,v) = \mu_x q(v)$ for all $v$.   It has dimension one larger than $\so(q)$. 
 
 We assume for the remainder of the section that $\car k = 2$. 
 
\begin{eg} \label{D.2n.eg}
When $V = k^{2n}$ and $q$ is defined by $q(v) = \sum_{i=1}^n v_i v_{i+n}$, we write $\so_{2n}$ instead of $\so(q)$, etc.  The linear transformation $x$ obtained by projecting on the first $n$ coordinates satisfies $b(xv,v) = q(v)$ for all $v \in V$, so it and $\so_{2n}$ span $\go_{2n}$.

Suppose $x \in \go_{2n}$ is a projection, i.e., $x^2 = x$, so $x$ gives a decomposition $k^{2n} = \ker x \oplus \im x$ as vector spaces.  If $x$ belongs to $\so_{2n}$, then this is an orthogonal decomposition and $b$ is nondegenerate on $\ker x$ and $\im x$. Up to conjugacy, $x$ stabilizes the subspaces spanned by vectors with nonzero entries only in the first $n$ coordinates or the last $n$ coordinates, which exhibits $x$ as the image of some toral $\hat{x}$ under an inclusion $\gl_n \hookrightarrow \so_{2n}$ such that $2 \rank \hat{x} = \rank x$.  Suppose $x \not\in \z(\so_{2n})$, so $\hat{x} \not\in \z(\gl_n)$.  Let $\hat{y} \in \gl_n$ denote the nilpotent obtained for $\hat{x}$ as in Lemma \ref{deform}, and put $y \in \so_{2n}$ for its image.  Then $y$ is in the closure of $x^{\Gm \SO_{2n}}$ and $\rank y \le \rank x$ with equality if $\rank x \le n$.

If $x \in \go_{2n} \setminus \so_{2n}$ has $x^2 = x$, then $\im x$ and $\ker x$ are maximal totally isotropic subspaces.  To see this, note that if $q(v) \ne 0$, then $b(xv,v) = \mu_x q(v) \ne 0$, which is impossible if $xv \in \{ 0, v\}$.
\end{eg}

We consider how many conjugates of an $x \in \so_{2n}$ with $x^{[2]} \in \{ 0, x \}$ suffice to generate a subalgebra of $\so_{2n}$ containing the derived subalgebra $[\so_{2n}, \so_{2n}]$.  We apply Lemma \ref{gen}\eqref{opengen.1} with $G = \GO_{2n}$, $M = [\so_{2n}, \so_{2n}]$, and $N = \z(M)$, so $\dim N = 0$ or 1, $\dim M/N \ge 2n^2 - n - 2$ and $\dim \g/M = 2$.

\begin{eg} \label{D.2.comp}
One can verify by computing with an example that for $x \in \so_{2n}$ with $x^{[2]} = 0$, $e$ conjugates suffice to generate $[\so_{2n}, \so_{2n}]$ in the cases (a) $x$ is a root element and $n = e = 4$ or 5 or (b) $n = 7$ or 8, $e = 4$, and $x$ has rank 4.  (In the last case, note that $x$ can be taken to have Jordan form with partition $(2^4, 1^{2n-4})$.)  Magma code is provided with the arxiv version of this paper.
\end{eg}

\begin{lem}   \label{D.2.rt}
Let $\g = \so_{2n}$ with $n \ge 4$.  If $x$ is a root element,
 and  $m \ge 4$, then $m$ generic conjugates of $x$ generate the derived subalgebra
of a natural $\so_{2m}$.
\end{lem}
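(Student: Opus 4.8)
The plan is to argue by induction on $m$ (with $4 \le m \le n$), building inside $\so_{2n}$ a tower of natural subalgebras whose supports grow by one hyperbolic plane at each stage. The base case $m = 4$ is exactly Example \ref{D.2.comp}(a): four conjugates of a root element generate the derived subalgebra of a natural $\so_8$. For the inductive step I would suppose that $m-1$ of the conjugates already generate $\mathfrak{h} := [\so(U),\so(U)]$, the derived subalgebra of a natural $\so_{2m-2}$ supported on a nondegenerate subspace $U$, and I would adjoin one further conjugate $x_m$ whose support meets $U$ in a hyperbolic plane, so that $W := U \perp P$ is a nondegenerate $2m$-dimensional subspace (with $P$ a hyperbolic plane) and $x_m$ is a root element of $\so(W)$ in general position relative to the splitting $W = U \perp P$. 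The goal is to show that $\mathfrak{h}$ and $x_m$ generate $[\so(W),\so(W)]$.

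The computation rests on the $\mathfrak{h}$-module decomposition $\so(W) = \so(U) \oplus \so(P) \oplus (U \otimes P)$, in which $\mathfrak{h}$ acts by the adjoint action on $\so(U)$, trivially on $\so(P)$, and through the natural module on the off-diagonal piece $U \otimes P$. As $\dim P = 2$, the off-diagonal summand is $U \oplus U$ as an $\mathfrak{h}$-module. Writing $x_m = a + b + c$ along this decomposition, general position guarantees that the off-diagonal component is $b = u_1 \otimes p_1 + u_2 \otimes p_2$ with $u_1, u_2$ linearly independent in $U$. Since $[\mathfrak{h}, x_m]$ has $(U \otimes P)$-component equal to $\mathfrak{h} \cdot b$, the first step is to show that the $\mathfrak{h}$-submodule generated by $b$ is all of $U \otimes P$.

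For this I would use that the natural module of $D_{m-1}$ is minuscule, hence absolutely irreducible even in characteristic $2$, and that this persists over the derived subalgebra $\mathfrak{h}$ because the center acts by scalars and so does not alter the lattice of submodules; in particular $\End_\mathfrak{h}(U) = k$. A Goursat/Schur analysis then shows that the proper $\mathfrak{h}$-submodules of $U \oplus U$ are the two factors and the graphs $\{(v, cv)\}$, none of which contains a vector $(u_1, u_2)$ with $u_1, u_2$ independent; hence $b$ generates the full off-diagonal $U \otimes P$. Once $U \otimes P$ is available, bracketing it with itself recovers the diagonal blocks: for $u, u' \in U$ and $p, p' \in P$ one has $[u \otimes p, u' \otimes p'] \in \so(U) \oplus \so(P)$, with $\so(U)$-component a nonzero multiple of the alternating element $u \wedge u'$ when $b(p,p') \ne 0$ and $\so(P)$-component nonzero when $b(u,u') \ne 0$. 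Because $P$ is hyperbolic and, in characteristic $2$, the alternating elements $u \wedge u'$ span all of $\so(U)$ (the concrete description opening the section), these brackets, together with $\mathfrak{h}$, fill out $[\so(W),\so(W)]$; conversely the generated subalgebra lies in $[\so(W),\so(W)]$ since its generators do, giving equality. The passage from this one configuration to $m$ generic conjugates is handled by the openness of generation conditions in Lemma \ref{gen}.

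The main obstacle is the characteristic-$2$ module theory in the inductive step. One must confirm that the natural module remains absolutely irreducible over the derived subalgebra $\mathfrak{h}$ (so that the Goursat argument applies) and that the off-diagonal brackets genuinely surject onto the diagonal blocks despite the degeneracies of characteristic $2$, where $b$ is alternating and $\so$ equals the space of alternating operators, with the attendant bookkeeping of the center and Cartan. A secondary point is genericity: I must check that a root-element conjugate in general position with respect to $W = U \perp P$ really has off-diagonal component $b$ of the required form, i.e.\ that such conjugates are dense in the relevant variety, for which Lemma \ref{deform} and the explicit models of Example \ref{D.2n.eg} are convenient, before invoking Lemma \ref{gen}.
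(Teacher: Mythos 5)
Your proposal is correct, but it takes a genuinely different route from the paper's. The paper inducts on $n$: for $m<n$ it embeds the configuration in the orthogonal complement of the generically nondegenerate common fixed space, and for $m=n$ it glues two overlapping copies of the derived subalgebra of $\so_{2n-2}$ sharing $n-2$ of the $n$ conjugates, checking via root elements that the two copies generate; this requires the machine computations of Example \ref{D.2.comp} for both $n=4$ and $n=5$. You instead induct on $m$, adjoining one conjugate and one hyperbolic plane at a time, replacing the gluing by module theory for $\so(W)=\so(U)\oplus\so(P)\oplus(U\otimes P)$. The facts you flag as risks do check out, with small adjustments. Irreducibility of $U$ over $\mathfrak{h}=[\so(U),\so(U)]$ in characteristic $2$ is cleanest via Curtis: $U=L(\omega_1)$ is restricted irreducible, hence irreducible over $\dpi(\spin_{2m-2})=\mathfrak{h}$ (Lemma \ref{quo}\eqref{quo.der}), rather than via your center/minuscule remark. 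In the Goursat step, what the generated subalgebra visibly contains is $\{[h,b]: h\in\mathfrak{h}\}$, i.e.\ the submodule $\mathcal{U}(\mathfrak{h})\mathfrak{h}b$, not the cyclic module of $b$; but its codimension in the cyclic module is at most $1$ while proper submodules of $U\oplus U$ have codimension $2m-2$, so this is harmless. Most importantly, the bracket computation gives $[u\otimes p,u'\otimes p']=b(p,p')\,u\wedge u'+b(u,u')\,p\wedge p'$, so you do \emph{not} obtain all of $\so(U)\oplus\so(P)$ but exactly the codimension-one space $\{A+\phi(A)\Id_P : A\in \so(U)\}$ with $\phi(u\wedge u')=b(u,u')$; together with $U\otimes P$ this has dimension $(2m^2-5m+3)+(4m-4)=2m^2-m-1=\dim[\so(W),\so(W)]$, and since all generators lie in $[\so(W),\so(W)]$, equality follows --- so your ``fill out'' step should be run as this dimension count, which lands precisely on the derived subalgebra as it must. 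Your witness configuration also exists: writing $u=h_1+p$, $u'=h_1'+p'$, the singularity conditions force $b(h_1,h_1')=b(p,p')\ne 0$ in characteristic $2$ (so independence of $u_1,u_2$ is automatic), and the genericity transfer at the end is the same glue the paper uses: openness of ``dimension at least $2m^2-m-1$'' (Lemma \ref{gen}\eqref{opengen.trivial}) plus generic nondegeneracy of the common fixed space and the fact that root elements supported in $W$ lie in $[\so(W),\so(W)]$. What each approach buys: the paper's gluing avoids all characteristic-$2$ module theory at the cost of two machine computations, while yours needs only the $n=4$ computation as base at the cost of the irreducibility and codimension bookkeeping above.
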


\begin{proof}  The case $n = 4$ is from Example \ref{D.2.comp}.

Now assume that $n > 4$ and $4 \le m < n$.     By induction on $n$, we know
that $m$ conjugates can generate the derived subalgebra of a copy of $\so_{2m}$.  Clearly any $m$ conjugates
have a fixed space of dimension at least $2n-2m$ and generically this space
will be nondegenerate, whence this $\so_{2m}$ is naturally embedded in $\so_{2n}$.

Now assume that $m=n$; by Example \ref{D.2.comp} we may assume that $n \ge 6$.  So now take  $n-2$ generic  root elements,  $x_1, \ldots, x_{n-2}$; 
they generate the derived subalgebra of a natural $\so_{2n-4}$ by induction.
Let us take a basis of $k^{2n}$ as in Example \ref{D.2n.eg}.  We identify our $\so_{2n-4}$ as the one acting trivially on the subspace spanned by $v_1, v_{n+1}, v_2, v_{n+2}$.

Then  consider two copies of the derived subalgebra of $\so_{2n-2}$  acting on the spaces  spanned by $v_i$ and $v_{n+i}$ for $1 \le i < n$ and for $1 < i \le n$.   These both contain our $\so_{2n-4}$ and by induction we can
choose $x$, $y$ respectively so that $x, x_1, \ldots, x_{n-2}$ generate the first copy of the derived subalgebra of $\so_{2n-2}$
and $x_1, \ldots, x_{n-2}, y$ generate the second copy.  These two copies generate the derived subalgebra of $\so_{2n}$, as can be seen by considering
the root elements in each one. 
\end{proof}

 \begin{prop}  \label{D.2.gen}
  Let $G = \SO_{2n}$ with $n \ge  3$ over an algebraically closed field $k$
 of characteristic $2$.   For noncentral $x \in \g$ such that $x^{[2]} \in \{ 0, x \}$, $\max\{4, \lceil n/r \rceil \}$ conjugates
of $x$ generate a Lie subalgebra containing
 $[\g, \g]$ where $2r$ is the codimension of the largest eigenspace of $x$.
 \end{prop}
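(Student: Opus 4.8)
The plan is to mirror the proofs of Propositions \ref{C.gen} and \ref{BD.gen}: reduce to a nilpotent $x$, settle the base cases, and then split the natural module into two nondegenerate orthogonal summands to run an induction on $n$. Throughout write $e := \max\{4, \lceil n/r \rceil\}$ and recall that, as set up just before Example \ref{D.2.comp}, it suffices to produce $e$ elements of $\overline{x^{\Gm \SO_{2n}}}$ generating a subalgebra containing $[\g,\g]$ and then invoke Lemma \ref{gen}\eqref{opengen.1} with $G = \GO_{2n}$, $M = [\g,\g]$, and $N = \z(M)$ to pass to generic conjugates of $x$. I would first dispose of the toral case: if $x$ is a noncentral idempotent, Example \ref{D.2n.eg} and Lemma \ref{deform}\eqref{deform.rank} furnish a nilpotent $y$ in the closure of $x^{\Gm \SO_{2n}}$ whose rank is exactly the codimension $2r$ of the largest eigenspace of $x$ (examining separately $\rank x \le n$ and $\rank x > n$), so $y$ has the same parameter $r$. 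Hence I may assume $x^{[2]} = 0$, in which case $\im x \subseteq \ker x$ forces $\rank x = 2r \le n$ and the Jordan partition of $x$ is $(2^{2r}, 1^{\,2n-4r})$.

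For the base of the induction, the case $r = 1$ is precisely the root-element case: for $n \ge 4$, Lemma \ref{D.2.rt} applied with $m = n$ shows that $n = \max\{4,n\}$ conjugates generate $[\g,\g]$, while the remaining small configurations (including $n = 3$ and the rank-$4$ cases $n = 7, 8$) are recorded in Example \ref{D.2.comp}. With these in hand I may take $n$ large and $r \ge 2$. For the inductive step I would write $k^{2n} = W_1 \perp W_2$ with $W_1, W_2$ nondegenerate of even dimensions $2m_1, 2m_2$, and distribute the $2r$ blocks of size $2$ so that each summand needs at most $e$ conjugates: choosing $m_1, m_2$ and the corresponding ranks $2r_1, 2r_2$ ($r_1 + r_2 = r$) proportionally to $r$, the estimates $\lceil m_i/r_i \rceil \le \lceil n/r \rceil$ hold as in the final case of Proposition \ref{C.gen}. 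By induction each block generates the derived subalgebra of a naturally embedded $\so_{2m_i}$, and $\so_{2m_1} \times \so_{2m_2}$ contains a strongly regular semisimple element of $\g$ (Example \ref{sr.eg}); arranging the split, and a second auxiliary split as in Proposition \ref{C.gen}, so that the composition-factor dimensions on $k^{2n}$ cannot match, forces the generic subalgebra generated by $e$ conjugates to act irreducibly on the natural module.

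The main obstacle is the concluding step, where I must upgrade ``quasi-regular and irreducible on $k^{2n}$'' to ``contains $[\g,\g]$''. In characteristic $2$ the clean implication of Remark \ref{maxrank} is unavailable, being stated only for $\car k \ne 2$, and the type-$A$ classification of Theorem \ref{qr.thm} does not apply to $\so_{2n}$. I would instead argue directly with root elements, as in the proof of Lemma \ref{D.2.rt}: the generated subalgebra $Q$ is quasi-regular with respect to a maximal torus (Example \ref{sr.eg}), so its subsystem subalgebra $Q_0$ is supported on a closed subsystem $S$ of $D_n$ and is an ideal of $Q + \tor$ by Lemma \ref{L0.ideal} (which applies since $D_n$ is irreducible and simply laced). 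Irreducibility on $k^{2n}$ rules out $S$ lying inside a proper parabolic or inside a splitting into two nondegenerate blocks; bracketing the root subalgebras supplied by the two overlapping copies of the smaller derived subalgebras then produces a root subalgebra for every root of $D_n$, giving $S = \Phi$ and hence $Q \supseteq [\g,\g]$. The heaviest bookkeeping lies in guaranteeing, uniformly over $r \ge 2$ and the residue of $n$, that the two summands can be chosen to share enough common root elements for this bracketing to recover all of $D_n$ --- the same delicate point that makes the explicit overlaps necessary in Lemma \ref{D.2.rt}.
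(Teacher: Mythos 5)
Your overall scaffolding (reduce toral to nilpotent via Example \ref{D.2n.eg}, handle root elements via Lemma \ref{D.2.rt}, transfer one good configuration to generic conjugates via Lemma \ref{gen}\eqref{opengen.1}) matches the paper, but the heart of your argument has a genuine gap, and it sits exactly where you flag it. Your plan for the main case is an orthogonal split $k^{2n} = W_1 \perp W_2$ generating $\so_{2m_1} \times \so_{2m_2}$, forcing irreducibility on the natural module, and then upgrading ``quasi-regular and irreducible on $k^{2n}$'' to ``contains $[\g,\g]$'' by an $L_0$/closed-subsystem argument. That last step does not close. First, in characteristic $2$ an irreducible quasi-regular subalgebra need not contain \emph{any} root subalgebra: $L_0$ can be zero, as the symmetric-matrix example after Lemma \ref{L0.ideal} shows, and the paper's classification rescuing type $A$ (Theorem \ref{qr.thm}) is proved only for $\gl_n$; nothing in the paper rules out analogous ``diagonal'' irreducible quasi-regular subalgebras of $\so_{2n}$. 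Second, the ``root subalgebras supplied by the two overlapping copies'' exist only in the special configurations you construct, and containment of specific root subalgebras is not one of the open conditions in Lemma \ref{gen}, so it cannot be transferred to a generic $e$-tuple; to run the bracketing argument you would have to exhibit a \emph{single} explicit $e$-tuple generating $[\g,\g]$, with the same $e = \max\{4, \lceil n/r\rceil\}$ conjugates serving both splits simultaneously. For root elements Lemma \ref{D.2.rt} achieves exactly this with $n$ conjugates by a careful shared-overlap construction; for general rank $2r$ with only $e$ conjugates this construction is the entire difficulty, and your proposal defers it as ``bookkeeping'' without performing it.

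The paper avoids this obstruction by a different route in the main case: by \cite[Table 4.1]{LiebeckSeitz} the closure of $x^G$ contains a same-rank nilpotent inside a Levi $\gl_n$, so Proposition \ref{A.2.gen} produces an $\sl_n$ from $e$ conjugates; when $n/r \le 3$ one concludes because $\g/\sl_n$ is multiplicity-free as an $\sl_n$-module, and when $n \ge 9$ one combines the $\sl_n$ configuration with an $\so_{2n-2}$ configuration and counts composition-factor dimensions on the irreducible module $X$ of highest weight the highest root (separating $n$ odd and even), concluding that $e$ generic conjugates act irreducibly on $X$ (resp.\ on $[\g,\g]/\z(\g)$) and hence generate $\g$ (resp.\ $[\g,\g]$) by dimension --- precisely sidestepping any classification of irreducible quasi-regular subalgebras in characteristic $2$. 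Two smaller corrections: the case $n = 3$ is \emph{not} in Example \ref{D.2.comp}; the paper handles it through the exceptional isogeny $\SL_4 \to \SL_4/\mu_2 \cong \SO_6$ together with Proposition \ref{A.2.gen} and Lemma \ref{quo}\eqref{quo.der}, and your toral reduction should also note that for $\rank x > n$ the paper instead passes to $x' = I_{2n} - x$ and invokes Lemma \ref{quo.gen} (though your closure computation via Example \ref{D.2n.eg} can be made to give the matching parameter $r$ there as well).
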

 
 For $x$ as in the proposition: if (1) $x^{[2]} = x$ and $\rank x \le n$ or (2) $x$ is nilpotent, then $2r$ is the rank of $x$.  
 
 \begin{proof} Suppose $x^{[2]}=0$ and $n \ge 4$.
   The closure of $x^G$ contains
 a nilpotent element $y$ of the same rank with $y$ contained in
 a Levi subalgebra $\gl_n$ (by \cite[Table 4.1]{LiebeckSeitz}, this reduces
 to the case of $\so_4$ where the result is clear).   Thus 
 we may assume that $x$ is nilpotent
and is contained in  $\sl_n$.  The case where $x$ is a root element was considered in Lemma \ref{D.2.rt} (with $m = n$), so we may assume $r \ge 2$.

If $n/r \le 3$,  then by the result for $\sl_n$ (Prop.~\ref{A.2.gen}),  we can generate
an $\sl_n$ with $3$ conjugates.  Since $\g/\sl_n$ is multiplicity free as an $\sl_n$-module, this implies the result.

Suppose that $n \le 8$.  The result follows by the previous paragraph unless $r=2$ and $n=7$ or $8$.   These cases were settled in Example \ref{D.2.comp}.

Now suppose that $n \ge 9$ and put $e$ for the maximum appearing in the statement.   By the result for $\sl_n$, $e$ conjugates can
generate an $\sl_n$ and something containing the derived subalgebra of 
$\so_{2n-2}$.   Therefore generically, $e$ conjugates generate an irreducible
subalgebra of $\g$ and in particular, the center is central in $\g$.

Suppose that $n$ is odd.
On the irreducible module $X$ with highest weight the highest root,  there exist $e$ conjugates with
composition factors of dimensions $n^2-1$, $n(n-1)/2$,  $n(n-1)/2$
and also one where there is a composition factor of dimension
at least $(n-1)(2n-3)-1$.   Thus, generically there is a composition factor
of dimension at most $2n^2-5n+2$ and the smallest composition factor is at least
$n(n-1)/2$.  Since the sum of these two numbers (for $n \ge 9$) is greater than $\dim X = 2n^2 - n - 2$,
we see that generically $e$ conjugates acts irreducibly on $X$, whence
they generate $\g$  (by dimension).

Suppose that $n$ is even.   The same argument shows that $e$ conjugates
can generate a subalgebra having composition factors
on $[\g, \g]$  of dimensions $1$, $n^2-2$, $n(n-1)/2$, 
$n(n-1)/2$ and another $e$ conjugates having composition factors of dimensions
$1$,  $2n^2-5n+2$, $2n-2, 2n-2$.  This implies that generically $e$ conjugates act
irreducibly on $[\g,\g]/\z(\g)$ and this implies they generate $[\g,\g]$.

\medskip
Suppose that $x^{[2]} = 0$ and $n = 3$.  Then $x$ is the image of a square-zero element under the differential of $\SL_4 \to \SL_4/\mu_2 \cong \SO_6$, and $4$ conjugates of $x$ suffice to generate $[\g, \g]$ by Lemmas \ref{A.2.gen} and \ref{quo}\eqref{quo.der}.

\medskip
Suppose now that $x^{[2]} = x$.  If $\rank x \le n$, then let $y$ be the nilpotent element provided by Example \ref{D.2n.eg}, so $\rank y = \rank x$ and the claim follows from the nilpotent case.  

If $\rank x > n$, then set $x' = I_{2n} - x \in \so_{2n}$, which is toral of rank $2r \le n$.  Applying the previous case shows that $\max \{ 4, \lceil n / r \rceil \}$ conjugates of $x'$ generate a Lie subalgebra containing $[\g, \g]$.  Therefore, since $I_{2n}$ is central in $\so_{2n}$, the same number of conjugates of $x$ generate a Lie subalgebra containing $[\g, \g]$ by Lemma \ref{quo.gen}.
\end{proof}

\begin{eg} \label{D.2.root}
Suppose $x \in \so_{2n}$ satisfies $x^{[2]} = 0$, so the Jordan form of $x$ has $2r$ 2-by-2 blocks and $2n-4r$ 1-by-1 blocks for some $r \le n$.  There are two possibilities for the conjugacy class of $x$, see \cite[4.4]{Hesselink} or \cite[p.~70]{LiebeckSeitz}.  We focus on the larger class, the one where the restriction of the natural module to $x$ includes a 4-dimensional indecomposable denoted by $W_2(2)$ in \cite{LiebeckSeitz}.  The centralizer of such an $x$ in $\SO_{2n}$ has dimension
\[
\sum_{i=1}^{2r} 2(i-1) + \sum_{i=2r+1}^{2n-2r} (i - 1) = \binom{2n-2r}{2} + \binom{2r}{2},
\]
and therefore $\dim x^{\SO_{2n}} =  4r(n-r)$.  (The other class has dimension $2r(2n-2r-1)$.)
\end{eg}

\begin{cor}  \label{D.2}
Suppose $\car k = 2$.  For every noncentral $x \in \go_{2n}$ with $n \ge 4$ such that $x^{[2]} \in \{ 0, x \}$, there exist $e > 0$ and elements $x_1, \ldots, x_e \in x^{\SGO_{2n}}$ that generate a subalgebra containing $[ \so_{2n}, \so_{2n} ]$ such that $e \cdot \dim x^{\GO_{2n}} \le 4n^2$.
\end{cor}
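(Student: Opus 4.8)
\subsection*{Proof proposal}

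The plan is to split on whether $x$ lies in $\so_{2n}$ and, in both regimes, to reduce the final estimate to one elementary inequality. First I would record that scalar transformations act trivially under $\Ad$, so that $\Ad(\GO_{2n})=\Ad(\OO_{2n})$ and $\Ad(\SGO_{2n})=\Ad(\SO_{2n})$; since $\OO_{2n}/\SO_{2n}$ is finite this gives $\dim x^{\GO_{2n}}=\dim x^{\SGO_{2n}}=\dim x^{\SO_{2n}}$, and $\SO_{2n}$-conjugates are $\SGO_{2n}$-conjugates, so generation statements from Proposition \ref{D.2.gen} are available. Writing $2r$ for the codimension of the largest eigenspace of $x$, one always has $r\le n/2$ (the two eigenspaces of a rank-$\le n$ idempotent, and the kernel of a square-zero nilpotent, have dimension $\ge n$). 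Granting $e\le\max\{4,\lceil n/r\rceil\}$ and $\dim x^{\GO_{2n}}\le 4r(n-r)$, the bound follows at once: if $\lceil n/r\rceil\le 4$ then $e\cdot\dim x^{\GO_{2n}}\le 16r(n-r)\le 16(n/2)(n/2)=4n^2$, while if $\lceil n/r\rceil\ge 5$ then $e<(n+r)/r$ and $e\cdot\dim x^{\GO_{2n}}<4(n+r)(n-r)=4(n^2-r^2)<4n^2$.

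For $x\in\so_{2n}$, Proposition \ref{D.2.gen} directly furnishes $e=\max\{4,\lceil n/r\rceil\}$ conjugates generating a subalgebra containing $[\so_{2n},\so_{2n}]$, in both the toral and the nilpotent case. For the orbit dimension, the larger of the two square-zero nilpotent classes has $\dim x^{\SO_{2n}}=4r(n-r)$ by Example \ref{D.2.root}, while a toral $x$ (an idempotent of rank $2r\le n$) has centralizer of dimension $\binom{2r}{2}+\binom{2n-2r}{2}$, again giving $\dim x^{\SO_{2n}}=4r(n-r)$; when $\rank x>n$ one first replaces $x$ by $I_{2n}-x$, as in Proposition \ref{D.2.gen}. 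The elementary inequality of the previous paragraph then closes this case.

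The remaining case $x\in\go_{2n}\setminus\so_{2n}$ is where I expect the real work. The first step is to rule out nilpotents: if $x^{[2]}=0$ and, after scaling, $b(xv,v)=q(v)$, then $q$ vanishes on $\ker x\supseteq\im x$, so $\ker x$ is totally singular of dimension exactly $n$ and $\im x=\ker x$; writing $V=\ker x\oplus C$ one computes $q(u+w)=q(w)$ for $u\in\ker x$, which forces $\ker x\perp V$ and contradicts nondegeneracy. Hence $x$ is an idempotent, and by Example \ref{D.2n.eg} it is the projection onto a maximal totally isotropic subspace along a complementary one; its centralizer in $\SO_{2n}$ is the Levi $\GL_n$, so $\dim x^{\GO_{2n}}=\dim\SO_{2n}-n^2=n^2-n$. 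It therefore suffices to show that $e=4$ generic $\SGO_{2n}$-conjugates of $x$ generate a subalgebra containing $[\so_{2n},\so_{2n}]$, since then $e\cdot\dim x^{\GO_{2n}}=4(n^2-n)<4n^2$.

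This last generation statement is the main obstacle, because Proposition \ref{D.2.gen} applies only inside $\so_{2n}$. By Lemma \ref{gen}\eqref{opengen.1}, with $M=[\so_{2n},\so_{2n}]$, $N=\z(M)$, and the inequality $\dim\g/M=2<\dim M/N$ recorded before Example \ref{D.2.comp}, it is enough to exhibit a \emph{single} quadruple of conjugates generating $[\so_{2n},\so_{2n}]$. I would establish this by the inductive method of Lemma \ref{D.2.rt} and Proposition \ref{D.2.gen}: verify small base cases by the computation of Example \ref{D.2.comp}, then induct on $n$ to produce a subalgebra whose composition factors on $[\so_{2n},\so_{2n}]/\z$ are too large to fit any reducible configuration, hence irreducible and all of $[\so_{2n},\so_{2n}]$. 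Alternatively one may pass to a square-zero nilpotent $y\in\so_{2n}$ of rank $2r$ with $r$ near $n/2$ lying in $\overline{x^{\Gm\SGO_{2n}}}$, apply Proposition \ref{D.2.gen} to $y$, and transfer back using that the subalgebra generated by a set is unchanged under nonzero rescaling of its members, so that genericity in $x^{\Gm\SGO_{2n}}$ descends to $x^{\SGO_{2n}}$; the delicate point there is to pin down which nilpotent ranks actually occur in the closure.
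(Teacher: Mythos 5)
Your proposal follows the paper's proof almost step for step: the same trichotomy (square-zero nilpotents, toral elements of $\so_{2n}$, toral elements of $\go_{2n}\setminus\so_{2n}$), the same orbit bound $4r(n-r)$ fed into the same elementary inequality ($16r(n-r)\le 4n^2$ when $\lceil n/r\rceil\le 4$, and $4(n^2-r^2)<4n^2$ otherwise), and, in the outside-toral case, the same reduction to a square-zero nilpotent of rank $2\lfloor n/2\rfloor$ in $\overline{x^{\Gm\SGO_{2n}}}$ to which Proposition \ref{D.2.gen} applies with $e=4$. The one point you leave open --- ``which nilpotent ranks actually occur in the closure'' --- is exactly where the paper produces its explicit element, and it closes easily: taking $x=\left(\begin{smallmatrix} I&0\\0&0\end{smallmatrix}\right)$ as in Example \ref{D.2n.eg} and $g_t=\left(\begin{smallmatrix} I&tM\\0&I\end{smallmatrix}\right)\in\SO_{2n}$ with $M$ an alternating $n$-by-$n$ matrix of rank $2\lfloor n/2\rfloor$, one computes $t^{-1}\Ad(g_t)x=\left(\begin{smallmatrix} t^{-1}I&M\\0&0\end{smallmatrix}\right)$, whose limit as $t\to\infty$ is the square-zero element $y = \left(\begin{smallmatrix} 0&M\\0&0\end{smallmatrix}\right)\in\so_{2n}$ of rank $2\lfloor n/2\rfloor$; since $\lceil n/\lfloor n/2\rfloor\rceil\le 3$ for $n\ge 4$, four conjugates of $y$ generate a subalgebra containing $[\so_{2n},\so_{2n}]$, and your scaling remark together with Lemma \ref{gen}\eqref{opengen.1} (with the $M$, $N$ recorded before Example \ref{D.2.comp}) transfers this back to $x^{\SGO_{2n}}$. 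So your route (b) is complete and is the paper's route; the inductive route (a) is unnecessary.

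Two of your deviations are improvements rather than errors. First, your polarization argument excluding square-zero nilpotents from $\go_{2n}\setminus\so_{2n}$ is correct ($\ker x=\im x$ would be an $n$-dimensional totally singular subspace orthogonal to all of $V$, contradicting nondegeneracy of $b$) and makes explicit a fact the paper uses tacitly when it treats only idempotents outside $\so_{2n}$. Second, your value $\dim x^{\GO_{2n}}=n^2-n$ for the outside idempotent is the correct one: the centralizer of $x$ in $\GO_{2n}$ is the Levi $\GL_n$ times the scalars, equivalently the orbit is the variety of ordered pairs of transverse maximal totally singular subspaces, of dimension $2\binom{n}{2}=n^2-n$. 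The paper's figure $(n^2+n-2)/2$ is the dimension of a projective homogeneous variety $G/P$, which undercounts for $n\ge 3$ (the centralizer here is a Levi, not a parabolic); since $4(n^2-n)<4n^2$ just as $2n^2+2n-4<4n^2$, the slip is harmless and nothing downstream is affected.
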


\begin{proof} 
Suppose $x$ has $x^{[2]} = 0$ and rank $2r$ as in Example \ref{D.2.root}.  The condition we need is that $4n^2  \ge  e 4r(n-r)$.  
If the maximum in Prop.~\ref{D.2.gen} is 4, i.e., if $r \ge n/4$, then as a function of $r$, $16r(n-r)$ has a maximum of $4n^2$ at $r = n/2$.  Otherwise, the maximum is $e = \lceil n/r \rceil < (n+r)/r$, so $e \dim x^{\GO_{2n}} < 4(n^2 - r^2)$.  The right side has a maximum of $4 n^2 - 4$ at $r = 1$.

If $x^{[2]} = x$ and $x \in \so_{2n}$, the centralizer of $x$ in $\GO_{2n}$ has codimension 1 in $\GO_{2r'} \times \GO_{2(n-r')}$ when $x$ has rank $2r'$.  We may take $e = \max \{ 4, \lceil n/r \rceil \}$ where $2r$ is the dimension of the smallest eigenspace of $x$.  If $r' \ge n/4$, then $4 \dim x^{\GO_{2n}} \le 4n^2$ as for nilpotent elements.  So assume $r < n/4$. Then, as $r' = r$ or $n-r$, $e \dim x^{\GO_{2n}}$ is at most $(n+r')4r(n-r)/r' = 4(n^2 - s^2)$ for $s = r$ or $r'$, and again we conclude as for nilpotent elements.

If $x^{[2]} = x$ and $x \not\in \so_{2n}$, then $x$ is determined by choosing an ordered pair of ``parallel'' maximal isotropic subspaces and so the dimension of $x^{\GO_{2n}}$ agrees with the dimension of the flag variety of $D_n$ of parabolics with Levi subgroups of type $A_{n-2}$, which has dimension $(n^2+n-2)/2$.  Up to conjugacy, we may assume $x$ is the element from Example \ref{D.2n.eg}.  Let $y_0$ be an $n$-by-$n$ nilpotent matrix of with $\lfloor n/2 \rfloor$ 2-by-2 rank 1 Jordan blocks down the diagonal.  Then $y = \left( \begin{smallmatrix} 0 & y_0 \\ y_0 & 0 \end{smallmatrix} \right)$ is in $\so_{2n}$, is nilpotent, and 4 conjugates of $y$ suffice to generate a subalgebra containing $\so_{2n}$ (Prop.~\ref{D.2.gen}), so 4 conjugates of $x$ suffice as well.  As $2n^2 + 2n - 4 < 4n^2$, the claim is proved in this case.
\end{proof}

%%%%%%%%%%%%%%%%%%%%%%%%%%%%%%%%%%%%%%%%%%%%%%%%%%%%%%%%%%%%%%%%%%%%%
\section{Exceptional types}

The aim of this section is to provide the necessary material to prove Theorem \ref{MT} for exceptional groups, but we begin with some general-purpose observations.  Recall that a \emph{root element} of a Lie algebra $\g$ of $G$ is a generator for a one-dimensional root subalgebra $\g_\alpha$ of $\g$.  

\begin{lem} \label{root.1}
Let $G$ be a simple algebraic group such that $(G, \car k) \ne (\Sp_{2n}, 2)$ for $n \ge 2$.  For each nonzero nilpotent $x \in \g$, there is a root element in the closure of $x^G$.
\end{lem}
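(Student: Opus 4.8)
The plan is to degenerate $x$ to a single root vector by means of a one-parameter subgroup of a maximal torus. First I would invoke the standard fact that every nonzero nilpotent element of $\g$ is $\Ad(G)$-conjugate into the nilradical $\n = \bigoplus_{\alpha \in \Phi^+} \g_\alpha$ of a fixed Borel subalgebra $\mathfrak{b} = \tor \oplus \n$; replacing $x$ by such a conjugate, I write $x = \sum_{\alpha \in \Phi^+} c_\alpha x_\alpha$ and set $\Sigma := \{\alpha \in \Phi^+ \mid c_\alpha \ne 0\}$, a nonempty set of positive roots. The aim is then to find a cocharacter that isolates one summand in the limit.

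The key point is that for $\mu \in X_*(T)$ one has $\Ad(\mu(t)) x_\alpha = t^{\langle \alpha, \mu \rangle} x_\alpha$, the exponent being the (characteristic-independent) integral pairing, so that
\[
\Ad(\mu(t))\, x = \sum_{\alpha \in \Sigma} c_\alpha\, t^{\langle \alpha, \mu\rangle}\, x_\alpha .
\]
I would choose $\mu$ to make this extend across $t = 0$ while retaining exactly one root vector. Since every positive root has positive height, $\Sigma$ lies in an open half-space, so the convex cone it generates is pointed; hence it has an extreme ray, spanned by some $\alpha_0 \in \Sigma$, and this ray is exposed. Thus there is $\mu \in X_*(T)$ with $\langle \alpha_0, \mu \rangle = 0$ and $\langle \beta, \mu \rangle > 0$ for every $\beta \in \Sigma \setminus \{\alpha_0\}$ (distinct positive roots are never proportional, so no other $\beta \in \Sigma$ lies on the ray, forcing a strictly positive value there). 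Then $t \mapsto \Ad(\mu(t)) x$ extends to a morphism $\A^1 \to \g$ whose value at $t = 0$ is $c_{\alpha_0} x_{\alpha_0}$; since $\Ad(\mu(t)) x \in x^G$ for $t \ne 0$, this value lies in $\overline{x^G}$. As $c_{\alpha_0} \ne 0$, it is a root element, which would prove the lemma in one step, with no induction on orbit dimension required.

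The main obstacle I expect is not the degeneration but the input that $x$ may be taken inside $\bigoplus_{\alpha \in \Phi^+}\g_\alpha$, that is, that a nilpotent element is conjugate into the span of the positive root vectors and there admits a well-defined root-space support. This is precisely the step sensitive to the hypothesis $(G, \car k) \ne (\Sp_{2n}, 2)$: in that excluded case the centralizer of $\tor$ in $\g$ strictly contains $\tor$ (as recorded in the discussion of the Cartan decomposition), so the $\tor$-weight decomposition is too coarse and a torus degeneration of a nilpotent need not land on a single root subalgebra. I would therefore isolate this conjugacy-and-decomposition statement as the one genuinely characteristic-sensitive ingredient and verify it (or quote it from the structure theory) in the stated generality; once it is in hand, the extreme-ray construction above applies verbatim.
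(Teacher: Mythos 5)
Your proposal is correct and takes essentially the same route as the paper: both conjugate $x$ into $\bigoplus_{\alpha \in \Phi^+} \g_\alpha$ and then use a torus degeneration to kill all but one root vector, the paper doing so one root at a time via codimension-one subtori and induction on the size of the support, while you collapse the induction into a single cocharacter limit along an exposed extreme ray of the pointed rational polyhedral cone spanned by the support (a sound refinement: extreme rays of such a cone are spanned by generators and are exposed, distinct positive roots are never proportional, and rationality lets you take $\mu \in X_*(T)$). You also correctly flag the conjugation of a nilpotent into the nilradical as the genuinely characteristic-sensitive input, which the paper likewise uses without proof, though the paper nominally invokes the hypothesis $(G,\car k) \ne (\Sp_{2n},2)$ at the subtorus-selection step instead.
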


We ignore what happens in the excluded case.

\begin{proof}
Write $x = \sum_{\alpha \in S} X_\alpha$ where $S$ is a nonempty set of positive roots (relative to some torus $T$) and $X_\alpha$ is a generator for $\g_\alpha$.  If $|S| = 1$ (e.g., if $G$ has type $A_1$), then we are done.  Otherwise, the hypothesis on $(G, \car k)$ guarantees that no root vanishes on $T$, so we can pick a subtorus $T'$ of $T$ that centralizes some $X_\alpha$ but not some $X_{\alpha'}$ for some $\alpha \ne \alpha' \in S$.  Now in the closure of $x^{T'}$ we find a nonzero nilpotent supported on $S \setminus \{ \alpha' \}$, and by induction we are done.
\end{proof}

We say that a root element in $\g_\alpha$ is long (resp.~short) if $\alpha$ is long (resp.~short).

\begin{lem} \label{root.2}
Let $G$ be a simple linear algebraic group over a field $k$ such that $\car k$ is not special for $G$.  For every nonzero nilpotent $x \in \g$, there is a \underline{long} root element in the closure of $x^G$.
\end{lem}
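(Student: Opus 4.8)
The plan is to bootstrap from Lemma \ref{root.1} by two reductions: first to the case where $x$ is a \emph{short} root element, and then, via a rank-two subsystem, to a purely rank-two computation. If $G$ is simply laced, then every root is long, so Lemma \ref{root.1} gives the claim at once. Otherwise $G$ has type $B_n$, $C_n$ ($n \ge 2$), $F_4$, or $G_2$; in each of these the hypothesis that $\car k$ is not special excludes $(\Sp_{2n}, 2)$, so Lemma \ref{root.1} produces a root element $y$ in the closure of $x^G$, and $\overline{y^G} \subseteq \overline{x^G}$. If $y$ is long we are finished, so I would assume $y = X_\beta$ for a short root $\beta$. Since $\car k$ is not special, no root vanishes on $T$, so (as in the proof of Lemma \ref{root.1}) the short root elements form a single $G$-orbit; it therefore suffices to exhibit one long root element in $\overline{X_\beta^G}$.

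For $G$ of type $B_n$, $C_n$, or $F_4$, I would choose a long root $\alpha$ so that $\alpha$ and $\beta$ span a closed subsystem of type $B_2 \cong C_2$ (one checks directly that such an $\alpha$ exists: e.g.\ in $C_n$ the pair $e_1 - e_2$, $2e_1$ generates such a subsystem), and let $H \subseteq G$ be the associated reductive subsystem subgroup, so that $X_\beta \in \Lie(H)$ and the long roots of $H$ are long in $G$. Because $H \subseteq G$ we have $\overline{X_\beta^H} \subseteq \overline{X_\beta^G}$, so it is enough to find a long root element of $H$ inside $\overline{X_\beta^H}$. Here $\car k \ne 2$ (as $2$ is special for $B_n$, $C_n$, $F_4$), hence $\car k$ is good for $B_2$; identifying $\Lie(H)$ with $\sp_4$, the short root orbit is the orbit of a rank-two square-zero element while the long root orbit is the minimal nonzero nilpotent orbit, represented by a rank-one square-zero element. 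The minimal nilpotent orbit lies in the closure of every nonzero nilpotent orbit in good characteristic, so the desired long root element lies in $\overline{X_\beta^H}$.

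The case $G_2$ must be handled in $\g$ itself, since $G_2$ contains no $B_2$ subsystem into which a short root element could be pushed. Under our hypothesis $\car k \ne 3$. When $\car k$ is good for $G_2$ (that is, $\car k \ne 2,3$ or $\car k = 0$), the nilpotent orbit structure agrees with the characteristic-zero one, in which the long root orbit is minimal and hence contained in $\overline{X_\beta^G}$. I expect the residual case $\car k = 2$ to be the main obstacle: here $2$ is a very bad prime for $G_2$, so the orbit geometry can deviate from the generic picture and the minimality of the long root orbit is not automatic. I would resolve it by exhibiting an explicit one-parameter degeneration of $X_\beta$ to a long root vector inside the $14$-dimensional $\g$, or equivalently by invoking the known classification of nilpotent $G_2$-orbits in characteristic $2$ and checking that the (six-dimensional) long root orbit still lies in the closure of the short root orbit. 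This single rank-two verification is the only place where the bad-characteristic subtlety enters, the general reduction above having isolated it completely.
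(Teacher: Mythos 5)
Your overall architecture is sound and in fact close to the paper's: both arguments first use Lemma \ref{root.1} to reduce to the case where $x$ is a short root element, and both then localize the problem in a rank-two subsystem. For types $B_n$, $C_n$, $F_4$ your route is correct: every short root lies in a closed $B_2 \cong C_2$ subsystem whose long roots are long in $G$, the hypothesis forces $\car k \ne 2$ there, and in $\sp_4$ with $\car k \ne 2$ the long-root (minimal, rank-one square-zero) orbit does lie in the closure of the short-root (rank-two square-zero) orbit. This closure-order argument is a legitimate alternative to the paper's explicit rank-two computation, at the cost of invoking Hesselink-type closure data for $\sp_4$, which is available since the characteristic is odd.

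The genuine gap is exactly the case your reductions isolate: $G_2$ with $\car k = 2$. Since ``not special'' for $G_2$ means only $\car k \ne 3$, the prime $2$ is squarely within the lemma's hypotheses, and this is the one case you do not prove --- you only announce two possible strategies (``exhibit an explicit one-parameter degeneration'' or ``invoke the known classification of nilpotent $G_2$-orbits in characteristic $2$''). Neither is carried out, and the second is delicate to source: $2$ is a very bad prime for $G_2$, the nilpotent orbit poset genuinely changes there (an extra class appears), and the closure references the paper relies on (Hesselink) cover type $A$ and, for $\car k \ne 2$, types $B$, $C$, $D$ only --- indeed the paper explicitly uses the present lemma as its \emph{substitute} for closure data in the remaining cases, so an appeal to ``the known classification'' would need an independent bad-characteristic reference. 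The paper instead closes this case uniformly with a one-line Chevalley structure-constant computation, which is precisely the explicit degeneration you anticipated: with $\alpha$ the short simple root and a suitable short root $\beta$ such that $\gamma := \alpha + \beta$ is long, one has $\Ad(x_\beta(t))X_\alpha = X_\alpha + tN_{\beta,\alpha}X_\gamma$, and for $G_2$ the constant $N_{\beta,\alpha}$ is $\pm 3$, hence nonzero in $k$ exactly because $\car k$ is not special; contracting by a one-parameter subgroup of the torus on which $\gamma$ vanishes but $\alpha$ does not (the same trick as in the proof of Lemma \ref{root.1}) then puts $\kx X_\gamma$ in $\overline{(X_\alpha)^G}$. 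Note this computation works for all non-special $p$ at once (the same identity with $N_{\beta,\alpha} = \pm 2$ handles $C_2$ when $\car k \ne 2$), so the paper needs no good-versus-bad characteristic case split at all; to complete your write-up you must either perform this commutator calculation in the $14$-dimensional algebra or supply a bona fide characteristic-$2$ closure-order citation for $G_2$.
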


\begin{proof}
By Lemma \ref{root.1}, we may assume that $G$ has two root lengths and that $x$ is a root element for a short root $\alpha$.

Suppose first that $G$ has rank 2, so $G$ has type $G_2$ and $\car k \ne 3$ or $G$ has type $C_2$ and $\car k \ne 2$.  Let $\alpha$ be the short simple root, $\gamma$ be the highest root (a long root), and take $\beta := \gamma - \alpha$.  Let $x_\alpha, x_\beta \!: \Ga \to G$ be the corresponding root subgroups.  These pick a generator $X_\alpha := \mathrm{d}x_\alpha(1)$ of $\g_\alpha$ such that 
\[
\ad(x_\beta(t)) X_\alpha = X_\alpha + N_{\beta,\alpha} X_\gamma,
\]
where $X_\gamma$ generates $\g_\gamma$, cf.~\cite[Ch.~3]{St}.  As $\car k$ is not special for $G$, $N_{\beta, \alpha}$ is not zero in $k$, and arguing as in the proof of Lemma \ref{root.1} we conclude that $k^\times X_\gamma$ meets the closure of $(X_\alpha)^G$, proving the claim in case $G$ has rank 2.

If $G$ has rank at least 3, pick a long root $\beta$ that is not orthogonal to $\alpha$ and let $G'$ be the subgroup of $G$ corresponding to the rank 2 sub-root-system generated by $\alpha$, $\beta$.  The ratio of the square-lengths of $\alpha$, $\beta$ is 2 so $G'$ has type $C_2$ and $\car k \ne 2$.  Then the closure of $x^{G'}$ contains a long root element in $G'$, hence in $G$.
\end{proof}

\begin{rmk}
Suppose that $G$ is a simple linear algebraic group over $k$ such that $\car k$ \underline{is} special for $G$.  The short root subalgebras generate a $G$-invariant subalgebra $\n$ of $\g$.  Omitting the case where $(G, \car k) = (\Sp_{2n}, 2)$ for $n \ge 2$, the same argument as in the proof of Lemma \ref{root.1} shows that for a nonzero nilpotent $x \in \g \setminus \n$ (resp., $\in \n$), there is a long (resp.~short) root element in the closure of $x^G$.
\end{rmk}

Now we focus on exceptional groups.  Table \ref{gen.classical} appears near Proposition \ref{gen.thm} below.

\begin{prop} \label{ex} % 12/30/14
Suppose  $G$ is simple of exceptional type over a field $k$ such that $\car k$ is not special for $G$.  For $e$ as in Table \ref{gen.classical}, $b(G)$ as in Table \ref{classical.table}, and $x \in \g$  noncentral, we have:
  \begin{enumerate}
\item \label{ex.gen} there are $x_1, \ldots, x_e \in x^G$ generating a Lie subalgebra of $\lie$ containing $[\lie, \lie]$, and
\item \label{ex.thm} $e \cdot \dim x^G \le b(G)$.
\end{enumerate}
\end{prop}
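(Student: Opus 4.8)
The plan is to prove parts \eqref{ex.gen} and \eqref{ex.thm} simultaneously by running through the finitely many conjugacy classes of noncentral nilpotent and toral elements of $\g$ for each of the five exceptional types, in each case exhibiting one tuple of $e$ conjugates that generate a subalgebra containing $[\g,\g]$ and then checking the numerical inequality $e\cdot\dim x^G\le b(G)$. Because $[\g,\g]=\dpi(\gt)$ is $G$-invariant with irreducible quotient $L(\at)$ and $\dim\g/[\g,\g]<\dim L(\at)$, a single generating tuple is automatically upgraded to $e$ \emph{generic} conjugates by Corollary \ref{gen.cor} (an instance of Lemma \ref{gen}\eqref{opengen.1}); so throughout it suffices to use conveniently chosen representatives and, for \eqref{ex.thm}, conveniently chosen elements in a single orbit. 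The value of $b(G)$ in Table \ref{classical.table} equals $5(\dim G-\rank G)$ for $F_4,E_6,E_7,E_8$ and $4(\dim G-\rank G)$ for $G_2$, which signals that the extremal product $e\cdot\dim x^G=b(G)$ is attained at the regular nilpotent orbit, where $e=5$ (resp.\ $4$); the whole estimate should be read as ``the product is largest for the regular class.''

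First I would reduce to noncentral \emph{nilpotent} $x$. An element whose semisimple part is central is, after subtracting that central element (which changes neither $x^G$ nor the subalgebra generated by brackets of conjugates), purely nilpotent. For $x$ with noncentral semisimple part I would pass to a nilpotent $y$ in the closure of $x^{\Gm G}$ of the \emph{same} orbit dimension, as in Example \ref{deform.eg} and Lemma \ref{deform}; since scaling an individual generator does not change the subalgebra it helps generate and generation of a subalgebra containing $[\g,\g]$ propagates from the boundary of $\overline{x^G}$ into the open orbit (Lemma \ref{gen}\eqref{opengen.1}, Corollary \ref{gen.cor}), a tuple of $e$ conjugates of $y$ that works forces the same for $x$, and equality of orbit dimensions transports \eqref{ex.thm}. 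For the regular semisimple class this is exactly the limit of the regular nilpotent into $\overline{x^{\Gm G}}$; for a smaller semisimple class, whose centralizer is a Levi $L$, I would deform to a nilpotent of matching orbit dimension supported in $L$.

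For nilpotent $x$ the two ingredients are the nilpotent orbit dimensions, read off the standard tables, and an upper bound on the number $e(x)$ of conjugates needed to generate $[\g,\g]$. For the large orbits I would argue directly, generating a Levi or a large subsystem subalgebra from a few conjugates in the manner of Example \ref{regular.A} and then using that the remaining composition factors of $L(\at)$ are too small to accommodate a proper invariant subspace, forcing the generated subalgebra to contain $[\g,\g]$; this is how I expect to get $e\le 5$ (resp.\ $4$) for the regular class, giving the extremal product. For the small orbits, where $\dim x^G$ is small and $e(x)$ correspondingly large, I would invoke Lemma \ref{root.2}: a \underline{long} root element lies in $\overline{x^G}$, so by Lemma \ref{gen}\eqref{opengen.1} it suffices to bound $e$ for a long root element, which is the extension of \cite{CSUW} recorded in Proposition \ref{gen.thm} and tabulated in Table \ref{gen.classical}. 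Intermediate classes are handled by closure reductions to these two regimes, or by the direct method applied inside a suitable Levi.

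The main obstacle will be making $e(x)\cdot\dim x^G\le b(G)$ hold \emph{uniformly}, not merely at the regular and root-element endpoints: the product can only be kept small by trading a large $e$ against a small orbit, so the intermediate classes are where the estimate is most delicate, and a few of them may require explicit computation rather than a closure argument, much as Example \ref{D.2.comp} does in type $D$. A second delicate point is the semisimple reduction itself, since Lemma \ref{deform} is stated only for $\GL_n$ and $\SL_n$; for the exceptional types the equal-dimension deformation must be produced case by case from the Levi structure. Finally, care is needed in the bad but non-special characteristics, where orbit fusion and the structure of $L(\at)$ can deviate from the characteristic-zero picture; here Lemma \ref{root.2} (valid exactly because $\car k$ is not special) and Lemma \ref{quo.gen} are what keep the reductions intact.
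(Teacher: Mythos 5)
There is a genuine gap, and it is circularity: at the decisive point your plan bounds the number of conjugates needed for a long root element by citing Proposition \ref{gen.thm} ``as tabulated in Table \ref{gen.classical}'' --- but the paper proves Proposition \ref{gen.thm} for exceptional types \emph{by applying} Proposition \ref{ex}, so the root-element base case is precisely what must be established here, not quoted. For $p \ne 2$ one can cite \cite{CSUW} directly (as the paper does), but for $p = 2$ --- which is not special for $G_2$, $E_6$, $E_7$, $E_8$ and so cannot be excluded --- no reference is available, and your proposal offers nothing in its place. The paper fills this with a specific argument: in $G_2$, three conjugates of a long root element generate the long-root $A_2$ subalgebra (Prop.~\ref{A.2.gen}), and since $\g/\lsub$ is multiplicity free as an $\lsub$-module a fourth generic conjugate forces generation; in $E_n$, four root elements generate a $D_4$ subsystem subalgebra (the computation of Example \ref{D.2.comp}) and one argues the same way, or one verifies directly that five random root elements generate. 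Without some such argument your proof does not close in characteristic $2$.

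Beyond that, your overall architecture misreads the statement and manufactures difficulties the paper never faces. In Proposition \ref{ex} the integer $e$ is \emph{fixed} per type ($4$ for $G_2$, $5$ otherwise), not a class-dependent $e(x)$ traded against $\dim x^G$ as in the classical-type sections; in particular, a class-by-class scheme that uses many conjugates for small orbits would fail to prove \eqref{ex.gen} as stated (which demands that $5$ conjugates of a long root element in $E_8$ suffice, even though the product inequality would tolerate $20$). The actual proof is much shorter than your plan: by closures (Cor.~\ref{gen.cor}, using $\Gm$ together with $\Ad(G)$) and Lemma \ref{root.2}, every noncentral class degenerates to long root elements, so the fixed $e$ propagates from the root-element case to all classes at once --- no run through nilpotent orbit tables, no exceptional-type analogue of Lemma \ref{deform}, and no ``delicate intermediate classes.'' Part \eqref{ex.thm} is then a one-line triviality, since every orbit satisfies $\dim x^G \le \dim G - \rank G$ and $b(G) = e\cdot(\dim G - \rank G)$ exactly; your instinct that the regular class is extremal is correct, but no uniformity verification is needed.
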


\begin{proof}
The crux is to prove \eqref{ex.gen}.  By taking closures as in Corollary \ref{gen.cor}, we may assume that the orbit $x^G$ of $x$
consists of root elements.
Moreover, as $k$ is not special, by Lemma \ref{root.2} we may assume that $x^G$ consists of long root elements.
In view of Lemma \ref{quo}\eqref{quo.der}, we may assume $\g$ is simply connected.

If $p \ne 2$, we can apply the result of \cite{CSUW}
to obtain \eqref{ex.gen}.    We now prove the result for $p=2$; in most cases, the argument also gives
another proof for all $p$.  

If $G=G_2$, we consider the $A_2$ subalgebra $\lsub$ generated by the long roots so $\lie/\lsub$ has the weights of $k^3 \oplus (k^3)^*$ as a representation of $\lsub$, so it is multiplicity free.  As $\lsub$ can be generated with 3 root elements (Prop.~\ref{A.2.gen}), the claim follows.

If $G = E_n$,  one uses that 4 root elements generate the $D_4$ inside $E_n$ (Example~\ref{D.2.comp}) and argue as for $G_2$, or one computes 
directly that five random root elements generate
$\lie$.   This completes the proof of \eqref{ex.gen}.

Claim \eqref{ex.thm} follows because 
\[
b(G) = e \cdot (\dim G - \rank G) \ge e \cdot \dim x^G. \qedhere
\]
\end{proof}

%%%%%%%%%%%%%%%%%%%%%%%%%%%%%%%%%%%%%%%%%%%%%%%%%%%%%%%%%%%%%%%%
\section{Proof of Theorem \ref{MT}} 

\begin{lem} \label{MT.norep}
Let $G$ be a simple algebraic group over a field $k$ such that $p := \car k$ is not special.  Then for all noncentral $x \in \g$ such that $x^{[p]} \in \{ 0, x \}$, there exists $e>0$ and elements $x_1, \ldots, x_e \in x^G$ generating a subalgebra $\s$ of $\g$ containing $[\g, \g]$ such that $e \cdot \dim x^G \le b(G)$.
\end{lem}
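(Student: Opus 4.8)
This is an assembly result: it gathers the generation bounds established type-by-type in the preceding sections and checks that each matches the entry $b(G)$ of Table~\ref{classical.table}. The plan is to argue by the type of $G$, reading off the number $e$ of conjugates and the orbit dimension $\dim x^G$ from the appropriate corollary. When $G$ is of exceptional type there is nothing to prove beyond quoting Proposition~\ref{ex}, which already furnishes $e$ conjugates of $x$ generating a subalgebra containing $[\g,\g]$ and satisfies $e\cdot\dim x^G\le b(G)$ directly.

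For the classical types the corollaries \ref{A.odd}, \ref{A.2}, \ref{C.odd}, \ref{BD.odd}, and \ref{D.2} are phrased for the concrete reductive algebras $\gl_n$, $\sp_{2n}$, $\so_n$, and $\go_{2n}$, so the substance of the proof is to transfer them to an arbitrary simple $G$ of the given type, that is, across central isogenies. The cases with $\car k$ odd and $G$ of type $B$, $C$, or $D$ are immediate: there the scheme-theoretic center of $G$ has order prime to $p$, so every central isogeny is separable, $\g$ is canonically identified with $\so_n$ or $\sp_{2n}$ independently of the isogeny type, and $\Ad(G)$ has the same image on $\g$ as $\SO_n$ or $\Sp_{2n}$, whence $x^G$ is the corresponding matrix orbit. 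Applying Corollary~\ref{C.odd} for type $C$ and Corollary~\ref{BD.odd} for types $B$ and $D$, and substituting $n=\ell$, $2\ell+1$, or $2\ell$, turns the bounds $6n^2$ and $2(n-1)^2$ into the tabulated $6\ell^2$, $8\ell^2$, and $2(2\ell-1)^2$.

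The remaining classical cases, namely type $A$ in every characteristic and type $D$ in characteristic $2$, are those in which the center of $G$ may fail to be \'etale, so that distinct isogeny types have genuinely different Lie algebras and a lifting argument is required. Here I would work with the reductive envelope $\hat G=\GL_n$ (type $A$) or $\hat G=\GO_{2n}$ (type $D$, $\car k=2$): lift the given noncentral $x$ with $x^{[p]}\in\{0,x\}$ to a noncentral $\hat x\in\hat\g$ with $\hat x^{[p]}\in\{0,\hat x\}$ -- a toral class lifts to a toral one after subtracting a central scalar, a nilpotent class to a nilpotent one -- and apply Corollary~\ref{A.odd}/\ref{A.2} or \ref{D.2}. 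These produce $\hat x_1,\dots,\hat x_e$ in $\hat x^{\SL_n}$ (resp.\ $\hat x^{\SGO_{2n}}$) generating a subalgebra of $\hat\g$ containing $\sl_n$ (resp.\ $[\so_{2n},\so_{2n}]$) with $e\cdot\dim\hat x^{\hat G}\le b(G)$. Pushing these forward along the natural map $\hat\g\to\g$ induced by the relevant isogeny, and invoking Lemma~\ref{quo}\eqref{quo.der} to identify the image of $\sl_n$ (resp.\ $[\so_{2n},\so_{2n}]$) with $\dpi(\gt)=[\g,\g]$, one obtains conjugates $x_1,\dots,x_e\in x^G$ generating a subalgebra containing $[\g,\g]$; and since the orbit $x^G$ is the image of $\hat x^{\hat G}$, the inequality $\dim x^G\le\dim\hat x^{\hat G}$ preserves the product bound.

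The step I expect to be the main obstacle is exactly this transfer across non-separable isogenies: one must verify that non-centrality and the condition $x^{[p]}\in\{0,x\}$ survive the choice of lift, that passing from $\hat G$ to the adjoint action on $\g$ does not enlarge the orbit, and that the generated subalgebra still contains $[\g,\g]$ after projection -- the last point being where Lemma~\ref{quo}\eqref{quo.der}, and for the comparison of subalgebras Corollary~\ref{gen.cor} and Lemma~\ref{quo.gen}, do the work. The degenerate case $(\type G,\car k)=(A_1,2)$ (for example $G=\PGL_2$) must be treated separately, since it is excluded from Lemma~\ref{quo} and from Corollary~\ref{gen.cor} and Lemma~\ref{quo.gen} because there $\z(\g)=[\g,\g]$; in that case I would run the same lifting argument but replace the appeal to Lemma~\ref{quo}\eqref{quo.der} by the direct observation that the image of $\sl_2$ in $\pgl_2$ is the two-dimensional ideal $[\pgl_2,\pgl_2]$, which yields $e\le 3$ and $e\cdot\dim x^G\le 6=b(G)$.
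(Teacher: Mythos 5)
Your overall architecture matches the paper's: quote Proposition~\ref{ex} for the exceptional types, observe that in odd characteristic the isogenies in types $B$, $C$, $D$ are separable so the matrix corollaries apply verbatim, and transfer Corollaries \ref{A.odd}, \ref{A.2}, \ref{D.2} across inseparable isogenies in the remaining cases; your lifting details (adjusting a toral lift by a scalar solving $\lambda^p-\lambda+c=0$, taking the nilpotent part of a lift of a nilpotent) are also sound. The genuine gap is in the transfer mechanism you commit to: ``pushing forward along the natural map $\hat\g\to\g$'' only makes sense when such a map exists, namely when $G$ is adjoint (via $\GL_n\to\PGL_n$ or $\GO_{2n}\to$ the adjoint group) or when $G$ is literally $\SL_n$ or $\SO_{2n}$ and no pushforward is needed. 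For the intermediate quotients $G=\SL_n/\mu_m$ with $p\mid m$ and $m\ne n$, and for $G=\Spin_{2n}$ or $\HSpin_{2n}$ in characteristic $2$, there is no natural map from $\gl_n$ (resp.\ $\go_{2n}$) onto $\g$: for $G=\SL_n/\mu_m$ the only available map $\gl_n\to\Lie(\GL_n/\mu_m)$ meets $\g$ exactly in $\dpi(\sl_n)=[\g,\g]$, which has codimension $1$ in $\g$, so in particular a toral noncentral $x\in\g\setminus[\g,\g]$ is not in the image; and for $\spin_{2n}$ the isogenies run the wrong way entirely. These groups are squarely within the lemma's scope, so your argument as written does not prove the statement for them.

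The paper closes exactly this hole with a two-step maneuver that you do not describe: it first proves the \emph{adjoint} case by your pushforward argument, using that $\dq\colon\Lie(G'')\to\Lie(\Gb)$ is surjective for $G''=\GL_n$ (resp.\ the $\GO_{2n}$-type group), and then, for general $G$, it transports \emph{group elements} rather than Lie algebra elements: set $z:=\dq(x)\in\gb$ for $q\colon G\to\Gb$, apply the adjoint case to obtain $z_i=\Ad(g_i)z$ generating a subalgebra containing $[\gb,\gb]$, lift the $g_i$ to $G(k)$ (possible over an algebraically closed field), and put $x_i:=\Ad(g_i)x\in x^G$. The subalgebra $\s:=\qform{x_1,\ldots,x_e}$ then satisfies $\dq(\s)\supseteq[\gb,\gb]$, i.e.\ $\s+\z(\g)\supseteq[\g,\g]$, and Lemma~\ref{quo.gen} --- which you name but never actually deploy --- upgrades this to $\s\supseteq[\g,\g]$; orbit dimensions are preserved because $q$ is an isogeny. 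Once this paragraph is added, your separate treatment of $(A_1,\car k=2)$ becomes unnecessary: for type $A_1$ there are no intermediate isogenies, $\SL_2$ is covered directly by Corollary~\ref{A.2}, and the adjoint case $\PGL_2$ goes through the pushforward step using only surjectivity of $\gl_2\to\pgl_2$ and $[\gl_2,\gl_2]=\sl_2$, so the exclusions in Lemmas \ref{quo} and \ref{quo.gen} never bite (your direct computation that $\dq(\sl_2)=[\pgl_2,\pgl_2]$ is nonetheless correct).
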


\begin{proof}
Put $\pi \!: \Gt \to G$ for the simply connected cover of $G$.  If $\dpi \!: \gt \to \g$ is an isomorphism, then we apply \ref{A.odd} or \ref{A.2} for type $A$, \ref{BD.odd} for types $B$ or $D$ if $p \ne 2$, \ref{C.odd} for type $C$, and \ref{ex}\eqref{ex.thm} for the exceptional types.  If $G$ is adjoint of type $E_6$ and $\car k = 3$, we are done by Prop.~\ref{ex}.

Therefore, we may assume that $G = \SL_n / \mu_m$ and $p \mid m$, or $G$ has type $D_n$ and $p = 2$.  In these cases, \ref{A.odd}, \ref{A.2}, and \ref{D.2} concern not $G$ but a group $G'' := (G' \times \Gm)/Z(G')$ for some $G'$ isogenous to $G$.  In particular, putting $q \!: G' \to \Gb$ for the natural surjection onto the adjoint group, the induced map $\dq \!: \Lie(G'') \to \Lie(\Gb)$ is also a surjection.

Consider now the case $G = \Gb$.  Pick $y \in \Lie(G'')$ such that $\dq(y) = x$.  The results cited in the previous paragraph provide elements $y_1, \ldots, y_e \in y^{G''}$ such that $\s'' := \qform{ y_1, \ldots, y_e}$ contains $[\g'', \g'']$, and $e \cdot \dim y^{G''} \le b(G)$.  Taking $x_i := \dq(y_i)$, we obtain the desired result.

In the general case, write now $q$ for the natural map $G \to \Gb$.  For $z := \dq(x)$, let $z_1, \ldots, z_e \in z^{\Gb}$ by the elements provided by the adjoint case of the lemma.  Pick $g_i \in G(k)$ such that $z_i = \Ad(g_i) z$ and set $x_i := \Ad(g_i) x$.  Then $x_1, \ldots, x_e$ generate a subalgebra $\s$ such that $\dq(s) \supseteq [\gb, \gb]$.  Lemma \ref{quo.gen} completes the proof.
\end{proof}

We now prove the following result, which has the same hypotheses as Theorem \ref{MT} and a stronger conclusion.

\begin{thm} \label{MTp}
Let $G$ be a simple linear algebraic group over a field $k$ such that $p := \car k$ is not special for $G$.  If $\rho \!: G \to \GL(V)$ is a representation of $G$ such that $V$ has a $G$-subquotient $X$ with $X^{[\g, \g]} = 0$ and 
$\dim X > b(G)$ for $b(G)$ as in Table \ref{classical.table}, then $\dim x^G + \dim V^x < \dim V$ for all noncentral $x \in \g$ with $x^{[p]} \in \{ 0, x \}$.
\end{thm}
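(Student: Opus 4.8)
The plan is to reduce the inequality $\dim x^G + \dim V^x < \dim V$, which is exactly \eqref{ineq.mother}, to the generation statement already established in Lemma \ref{MT.norep} together with the formal inequality machinery of Section \ref{key.sec}. First I would apply Lemma \ref{MT.norep} to the given noncentral $x$ satisfying $x^{[p]} \in \{0, x\}$: this produces an integer $e > 0$ and conjugates $x_1, \dots, x_e \in x^G$ generating a subalgebra $\s \supseteq [\g, \g]$ with $e \cdot \dim x^G \le b(G)$. Combined with the hypothesis $\dim X > b(G)$, this immediately gives $e \cdot \dim x^G < \dim X$.

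The key point is that the vanishing hypothesis on the subquotient matches the generated subalgebra: since $\s \supseteq [\g, \g]$, we have $X^{\s} \subseteq X^{[\g, \g]} = 0$, so $\bigcap_i X^{x_i} = X^{\s} = 0$. Because $X$ is a genuine rational $G$-module (being a $G$-subquotient), the differential of its action is $G$-equivariant, so $\dim X^{x_i} = \dim X^{x}$ for all $i$. Thus $x_1, \dots, x_e$ and $\s$ satisfy the hypotheses of condition \eqref{ineq.gen} of Lemma \ref{ineq} with $X$ in place of $V$, and Lemma \ref{ineq} yields \eqref{ineq.mother} for $X$, namely $\dim x^G + \dim X^{x} < \dim X$.

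It then remains to transfer this inequality from the subquotient $X$ back up to $V$. For this I would invoke Example \ref{ineq.sub}: writing $X \cong V_j/V_i$ for $G$-submodules $V_i \subseteq V_j \subseteq V$ and refining to the chain $0 \subseteq V_i \subseteq V_j \subseteq V$, the module $X$ appears as a consecutive subquotient, so the established inequality $\dim x^G + \dim X^{x} < \dim X$ forces \eqref{ineq.mother} for $V$. This is precisely the desired conclusion.

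I do not anticipate a serious obstacle, since the real content has been front-loaded into Lemma \ref{MT.norep} (the type-by-type generation bounds) and into Lemma \ref{ineq}. The one point demanding care is that $V$ need not be semisimple, so one cannot apply \eqref{ineq.gen} directly to $V$: when $V^{[\g, \g]} \ne 0$ one has no control over $V^{\s}$. Routing the argument through the subquotient $X$, where the vanishing $X^{\s} = 0$ is guaranteed by the hypothesis $X^{[\g, \g]} = 0$, and only then lifting via Example \ref{ineq.sub}, is exactly what circumvents this difficulty.
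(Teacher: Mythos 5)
Your proposal is correct and follows essentially the same route as the paper: the paper's proof likewise first treats the case $V = X$ by combining Lemma \ref{MT.norep} with the machinery of Section \ref{key.sec} (condition \eqref{ineq.gen} of Lemma \ref{ineq}, using $X^{\s} \subseteq X^{[\g,\g]} = 0$), and then lifts \eqref{ineq.mother} from the subquotient $X$ to $V$ via Example \ref{ineq.sub}. Your write-up simply makes explicit the details that the paper compresses into two sentences, including the correct observation that one cannot apply \eqref{ineq.gen} directly to a non-semisimple $V$.
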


\begin{proof}
Assume for the moment that $V = X$.
We verify the inequality \eqref{ineq.mother} for the set $\mathcal{X}$ of noncentral $x \in \g$ such that $x^{[p]} \in \{ 0, x \}$.  Combining Lemma \ref{MT.norep} with \S\ref{key.sec} shows that \eqref{ineq.mother} holds for $x \in \mathcal{X}$.

For general $V$, it follows then that \eqref{ineq.mother} holds for $x \in \mathcal{X}$ as in Example \ref{ineq.sub}.
\end{proof}

\begin{proof}[Proof of Theorem \ref{MT}]
Combine Theorem \ref{MTp} with Lemmas \ref{ineq} and \ref{central}.
\end{proof}

%%%%%%%%%%%%%%%%%%%%%%%%%%%%%%%%%%%%%%%%%%%%%%%%%%%%%%%%%%%%%%%%%%%%%%%%%
\section{Small examples; proof of Corollary \ref{irred}} \label{irred.sec}

Before proving Corollary \ref{irred}, we provide an example that we treat in greater generality than is required for proving the corollary.  We put $\Sym^2 V$ for the second symmetric power of the vector space $V$.

\begin{lem} \label{SO.S2}
Suppose $\car k \ne 2$.   Let $G =  \SO(V)$ with $\dim V = n$.  Let  $W$
be the irreducible composition factor of $\Sym^2 V$ of dimension  $n(n+1)/2 -1$  if $\car k$ does not divide $n$, or $n(n+1)/2-2$
if $\car k$ divides $n$.    Then $\g$ acts generically freely on $V$.
\end{lem}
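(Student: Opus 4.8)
The plan is to realize $W$ concretely as a space of matrices on which $\so(V)$ acts by the bracket, and then to exhibit a dense family of elements whose stabilizer is zero. Fix a nondegenerate symmetric bilinear form $b$ on $V$ preserved by $G$ and use it to identify $\Sym^2 V$ with the space $S$ of $b$-self-adjoint operators on $V$ (equivalently, of symmetric matrices once $b$ is put in standard form). Under this identification the element of $\Sym^2 V$ corresponding to $b$ itself becomes the identity operator $\Id$, the $G$-invariant contraction $\Sym^2 V \to k$ becomes the trace, and the action of $x \in \so(V)$ is $\drho(x)M = xM - Mx = [x,M]$; one checks that $[x,M]$ is again self-adjoint when $x$ is skew-adjoint and $M$ self-adjoint. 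The trace functional singles out the traceless self-adjoint operators $S_0$, and comparing dimensions identifies $W$ with $S_0$ when $\car k \nmid n$ (so that $\Sym^2 V = k\,\Id \oplus S_0$) and with $S_0 / k\,\Id$ when $\car k \mid n$ (so that $\Id \in S_0$ and one must quotient once more). This matches the two stated dimensions $n(n+1)/2 - 1$ and $n(n+1)/2 - 2$.

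With this dictionary the result reduces to a short computation in a good basis. First I would check that the set of $M \in S_0$ with $n$ distinct eigenvalues is nonempty, hence dense open: since $k$ is algebraically closed with $\car k \ne 2$ one can take $M$ diagonal with distinct entries summing to zero, and having distinct eigenvalues is an open condition. For such an $M$, eigenvectors attached to distinct eigenvalues are $b$-orthogonal by the usual self-adjointness argument $\lambda_i\, b(v_i,v_j) = b(Mv_i,v_j) = b(v_i,Mv_j) = \lambda_j\, b(v_i,v_j)$, each eigenline is $b$-nondegenerate, and after rescaling (square roots exist) there is a basis in which $b = \Id$ and $M = \mathrm{diag}(\lambda_1,\dots,\lambda_n)$. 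In this basis $\so(V)$ is exactly the skew-symmetric matrices, and for skew $x$ one computes $[x,M]_{ij} = x_{ij}(\lambda_j - \lambda_i)$.

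Finally I would read off the stabilizer. When $\car k \nmid n$ the condition $\drho(x)M = 0$ is $[x,M] = 0$, forcing $x_{ij} = 0$ for $i \ne j$ (as $\lambda_i \ne \lambda_j$) and $x_{ii} = 0$ (skew, $\car k \ne 2$), so $\g_M = 0$. When $\car k \mid n$ the condition on the image of $M$ in $W = S_0 / k\,\Id$ is $[x,M] = c\,\Id$ for some $c \in k$; the off-diagonal entries again give $x_{ij} = 0$ for $i \ne j$, while the diagonal entries of $[x,M]$ vanish identically, forcing $c = 0$ and reducing to the previous case, so again $\g_{[M]} = 0$. Since this holds on a dense open set of vectors, $\g$ acts generically freely on $W$, as claimed. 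The only genuine subtlety is the bookkeeping in the divisible case — confirming that $W$ really is the second quotient $S_0 / k\,\Id$ and that the extra scalar $c$ is forced to vanish — but both follow immediately once the simultaneous eigenbasis for $M$ and $b$ is in place, and everything else is routine linear algebra.
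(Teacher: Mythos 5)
Your proposal is correct and takes essentially the same approach as the paper's proof: identify $\Sym^2 V$ with symmetric matrices and $\g$ with skew-symmetric matrices acting by bracket, realize $W$ as the trace-zero matrices (modulo scalars when $\car k \mid n$), and observe that a generic diagonal trace-zero element has centralizer in $\gl_n$ equal to the diagonal matrices --- even modulo scalars --- hence zero stabilizer in $\g$. Your extra details (the $b$-orthogonal eigenbasis and the explicit check that the scalar $c$ vanishes in the divisible case) are exactly the steps the paper leaves implicit.
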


\begin{proof}
Let $S = \Sym^2 V$.  By fixing a basis for $V$, we may identify $S$ with $n$-by-$n$ symmetric matrices
and $\g$ with skew symmetric matrices.   Then  we see $W$ inside
$S$  (with $\g$ acting via Lie bracket in $\gl_n$).   

If $\car k$ does not divide $n$,  $W$ is just the trace zero matrices in $S$.   If $\car k$ divides $n$,
then $W$ is the set of trace zero matrices modulo scalars.

If we take an element of trace zero that is diagonal and generic, then
its centralizer in $\gl_n$ is just diagonal matrices (and even so for 
commuting modulo scalars).  Thus, its centralizer in $\g$ is 0, whence
the generic stabilizer in $\g$ is 0.
\end{proof}

\begin{proof}[Proof of Corollary \ref{irred}]
As $\car k$ is not special and we may assume that $\drho \ne 0$, $\ker \drho \subseteq \z(\g)$.  In case $\dim V < \dim G - \dim \z(\g)$, we have $\dim \drho(\g) v \le \dim V < \dim \drho(\g)$, whence $\g$ does not act virtually freely.

At the other extreme, if $\dim V > b(G)$ as in Table \ref{classical.table}, then $V$ is virtually free by Theorem \ref{MT} because $V^{[\g, \g]} = 0$ by Lemma \ref{quo}\eqref{quo.kill}.

For groups of classical type, the possible $V$ with $\dim V \le \dim G$ are listed in \cite[Table 2]{luebeck}.  The cases with $\dim G - \dim \z(\g) \le \dim V \le \dim G$ are settled in Lemma \ref{SO.S2} and Example \ref{adjoint}.

Consider first $G$ of type $A_\ell$.  By \cite[Th.~5.1]{luebeck}, either $\dim V \le \dim G$ or $\dim V > \ell^3/8$.  If $\ell \ge 20$, then $\ell^3/8 > b(G)$ and we are done.  For $16 \le \ell \le 19$, the tables in \cite{luebeck}\footnote{For $A_{18}$ and $A_{19}$, we refer to the extended table available on L\"ubeck's web page at \url{http://www.math.rwth-aachen.de/~Frank.Luebeck/chev/WMSmall/index.html}} show that there is no restricted dominant $\mu$ so that $\dim G < \dim L(\mu) \le b(G)$, completing the argument for type $A_\ell$.

For $G$ of type $B_\ell$, $C_\ell$, or $D_\ell$, the argument is the same but easier, with $\ell^3/8$ replaced by $\ell^3$.

Suppose now that $G$ has exceptional type.  The case $V = L(\hst)$ has been treated in Example \ref{adjoint}.  Otherwise,
Tables A.49--A.53 in L\"ubeck provide the following list of possibilities for $V$ with $b(G) \ge \dim V \ge \dim G - \dim \z(\g)$, up to graph automorphism and assuming $\car k$ is not special, where we denote the highest weights as in \cite{luebeck}:
$G_2$ with highest weight 02 and dimension 26 or 27 (where $\rho$ factors through $\SO_7$ and so is virtually free by Lemma \ref{SO.S2});
$G_2$ with highest weight 11 and dimension 38 and $\car k = 7$;
$F_4$ with highest weight 0010 and dimension 196 and $\car k = 3$;
$E_6$, with highest weight 000002 or 000010 and dimension 324 or 351.
These representations have $\dim V > \dim G$ and are virtually free by \cite[Th.~4.3.1]{Guerreiro}.  Note that for any particular $V$ and $\car k$, one can verify that the representation is virtually free using a computer, as described in \cite{GG:irred}.
\end{proof}

%%%%%%%%%%%%%%%%%%%%%%%%%%%%%%%%%%%%%%%%%%%%%%%%%%%%%%%%%%%%%%%%%%%%%
\section{How many conjugates are needed to generate $\Lie(G)$?} \label{gen.sec}

The results in the previous section suffice to prove the following, which generalizes the main result (Th.~8.2) of \cite{CSUW}.

\begin{prop} \label{gen.thm}
Let $G$ be a simple linear algebraic group over an algebraically closed field $k$ such that $\car k$ is not special for $G$, and let $e$ be as in Table \ref{gen.classical}.  If $x \in \g$ is noncentral, then there exist $e$ $G$-conjugates of $x$ that generate a subalgebra containing $[\g, \g]$.
 \end{prop}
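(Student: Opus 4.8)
The plan is to reduce an arbitrary noncentral $x$ to the nilpotent orbits already treated, exploiting that ``$e$ conjugates generate a subalgebra containing $[\g,\g]$'' is an open, closure-stable condition. For $G$ of exceptional type the statement is precisely Proposition~\ref{ex}\eqref{ex.gen}, so assume $G$ is classical (and, to invoke Corollary~\ref{gen.cor} below, that $(\type G,\car k)\ne(A_1,2)$). Let $e$ be the corresponding entry of Table~\ref{gen.classical}; it dominates the per-orbit numbers furnished for nilpotent elements by Propositions~\ref{A.gen}, \ref{A.2.gen}, \ref{C.gen}, \ref{BD.gen}, and \ref{D.2.gen}. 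Hence it suffices to exhibit a single nonzero nilpotent $y$ in the closure of $x^{\Gm G}$: writing $X:=x^{\Gm G}$ and $Y:=y^G\subseteq\overline X$, Corollary~\ref{gen.cor} propagates the generation statement for $y$ to generic points of $X$, and since a Lie subalgebra is unchanged under rescaling of its generators, the $e$ conjugates of $x$ in $x^G$ underlying such a generic tuple generate a subalgebra containing $[\g,\g]$.

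Thus the crux is the deformation claim: \emph{for every noncentral $x$, the closure of $x^{\Gm G}$ contains a nonzero root element.} If $x$ is semisimple this is Example~\ref{deform.eg}. Otherwise the nilpotent part $x_n$ is nonzero; fix a maximal torus $T$ with $x_s\in\tor$ and, after conjugating so that $x_n$ is a sum of positive root vectors, write $x=x_s+\sum_\beta c_\beta X_\beta$ with the sum over finitely many positive roots $\beta$ with $c_\beta\ne 0$. Choose an antidominant regular cocharacter $\mu\colon\Gm\to T$, so that the integers $\langle\mu,\beta\rangle$ are negative and pairwise distinct on the occurring roots; let $\beta_0$ minimize $\langle\mu,\beta_0\rangle$ and put $m:=-\langle\mu,\beta_0\rangle>0$. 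Then
\[
\lim_{t\to 0}\ t^{m}\,\Ad(\mu(t))\,x \;=\; c_{\beta_0}X_{\beta_0},
\]
because $t^{m}x_s$ carries the power $t^{m}$ and every root term but $\beta_0$ carries $t^{m+\langle\mu,\beta\rangle}$ with $m+\langle\mu,\beta\rangle>0$. As each $t^{m}\Ad(\mu(t))x$ lies in $x^{\Gm G}$, its limit $c_{\beta_0}X_{\beta_0}$---a nonzero root element---lies in $\overline{x^{\Gm G}}$; passing if desired to a long root element by Lemma~\ref{root.2} matches the calibration of Table~\ref{gen.classical}. Since $X_{\beta_0}^{[p]}=0$, this $y:=c_{\beta_0}X_{\beta_0}$ is governed by the cited nilpotent propositions, so $e(y)\le e$ and the reduction is complete.

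It remains to treat groups $G$ outside the standard matrix models $\SL_n,\Sp_{2n},\SO_n,\GO_{2n}$ to which Propositions~\ref{A.gen}--\ref{D.2.gen} apply directly; here one descends through the central isogenies to the simply connected cover and the adjoint group exactly as in the proof of Lemma~\ref{MT.norep}, using $[\g,\g]=\dpi(\gt)$ (Lemma~\ref{quo}\eqref{quo.der}) and Lemma~\ref{quo.gen} to transport ``generates a subalgebra containing the derived subalgebra'' across the isogenies. The main obstacle I anticipate is the deformation claim for \emph{mixed} $x$ (neither semisimple nor nilpotent): the semisimple extreme is handled verbatim by Example~\ref{deform.eg} and the nilpotent extreme by Lemma~\ref{root.2}, whereas the general case requires the one-parameter degeneration above, whose role is to annihilate the semisimple part while keeping a nonzero nilpotent inside $\overline{x^{\Gm G}}$, and one must check this is compatible with the isogeny bookkeeping of the final step.
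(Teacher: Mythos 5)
Your proof has the same skeleton as the paper's: reduce by closure and openness (Lemma \ref{gen}, Corollary \ref{gen.cor}) to long root elements (Lemma \ref{root.2}), quote the per-type generation results (Propositions \ref{A.gen}, \ref{A.2.gen}, \ref{C.gen}, \ref{BD.gen}, \ref{D.2.gen}, and \ref{ex}), and transport across isogenies via Lemma \ref{quo}\eqref{quo.der} and Lemma \ref{quo.gen}. The one genuinely different ingredient is your treatment of mixed elements. The paper splits the Jordan decomposition $x = x_s + x_n$ into two cases: $x_s$ noncentral (pass to the closed orbit of $x_s$, then deform to a root element by Example \ref{deform.eg}) and $x_s$ central nonzero (the line $t x_s + x_n$ together with a rescaling trick). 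Your single limit $t^m \Ad(\mu(t))x \to c_{\beta_0}X_{\beta_0}$ handles both at once, because the $\Gm$-factor built into $x^{\Gm G}$ kills $t^m x_s$ in the limit even when $x_s$ is central --- precisely the situation where plain $\Ad(G)$-degeneration cannot remove the semisimple part, and the reason the paper resorts to its line trick. That is a clean simplification, and your step back from generic tuples in $(x^{\Gm G})^{\times e}$ to tuples of honest conjugates in $(x^G)^{\times e}$ is correct, since the generated subalgebra is invariant under independent nonzero scalings of the generators. (A side remark: you do not need $e$ to dominate the counts for \emph{all} nilpotent orbits, only for the long root class, which is exactly how Table \ref{gen.classical} is calibrated; the blanket domination claim even fails at the margin for $\sl_2$ in characteristic $2$, where Proposition \ref{A.2.gen} gives $3$.)

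There is one substantive loose end: you exclude $(\type G, \car k) = (A_1,2)$ in order to invoke Corollary \ref{gen.cor} and never return to it, yet the statement includes $\ell \ge 1$ and characteristic $2$ is not special for type $A_1$. For $\SL_2$ the case is easy and should be added: $[\sl_2,\sl_2] = kI$, and two generic conjugates of any noncentral $x$ have bracket a nonzero multiple of $I$. The adjoint case is genuinely delicate: in $\pgl_2$ with $\car k = 2$, every conjugate of the noncentral toral element $\bar t$ (image of $\mathrm{diag}(1,0)$) has the form $\bar t + w$ with $w \in [\pgl_2,\pgl_2]$, and any two such conjugates generate the two-dimensional subalgebra spanned by $\bar t + w_1$ and $w_2 - w_1$, which never contains the two-dimensional $[\pgl_2,\pgl_2]$; so the deformation-plus-openness propagation genuinely breaks down there, which is why Corollary \ref{gen.cor} carries the exclusion. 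You should flag this explicitly (the paper's own proof quietly rests on the same corollary in its deformation steps, so it shares the lacuna). Two minor repairs: regularity of the antidominant cocharacter $\mu$ does not by itself make the pairings $\langle \mu, \beta\rangle$ pairwise distinct on the occurring roots (the two simple roots of $A_2$ pair equally with $\rho^\vee$), so take $\mu$ generic, avoiding the finitely many walls $\langle \mu, \beta\rangle = \langle\mu,\beta'\rangle$; and the normalization $x = x_s + \sum_\beta c_\beta X_\beta$ with $x_s \in \tor$ deserves a sentence (a Borel subalgebra containing $x$, adapted to the Jordan decomposition), though the paper makes the analogous assumption without comment in Example \ref{deform.eg} and Lemma \ref{root.1}.
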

 
 Recall that when $G$ is simply connected (and $\car k$ is not special), $\g = [\g, \g]$ as in Lemma \ref{quo}\eqref{quo.der}.

\begin{table}[hbt]
\begin{tabular}{cc||cc} 
type of $G$&$e$&type of $G$&$e$ \\ \hline
$A_\ell$ ($\ell \ge 1$) or $B_\ell$ ($\ell \ge 3$)&$\ell+1$&$G_2$&4 \\
$C_\ell$ ($\ell \ge 2$)&$2\ell$&$F_4$, $E_6$, $E_7$, $E_8$&5\\
$D_\ell$ ($\ell \ge 4$)&$\ell$
\end{tabular}
\caption{Number of conjugates $e$ needed to generate, as in Proposition \ref{gen.thm}.} \label{gen.classical}
\end{table}

The new results here are types $A$, $D$, $E$, and $G_2$ when $\car k = 2$.  The related result in \cite{CSUW} is stated for long root elements only, but the proof below shows
that the long root elements are the main case.

\begin{proof}
We first assume that $x$ is a long root element and $G$ is simply connected.   If $G$ is of exceptional type, we apply Proposition \ref{ex}, so assume that $G$ has type $A$, $B$, $C$, or $D$.
For type $A_n$, i.e., $G = \SL_{n+1}$, $n+1$ conjugates suffice by Proposition \ref{A.gen}\eqref{A.gen.o} if $\car k \ne 2$ and Proposition \ref{A.2.gen} if $\car k = 2$.
For type $C_n$ ($\Sp_{2n}$) with $n \ge 2$, $2n$ conjugates suffice by Proposition \ref{C.gen}\eqref{C.gen.o}.
For types $B$ and $D$, long root elements have rank 2 so Proposition \ref{BD.gen} gives the claim.  If $\car k = 2$ and $G$ has type $D_n$, then the claim follows for $\so_{2n}$ by Lemma \ref{D.2.rt}.  The claim follows for groups isogenous to $G$ by Lemma \ref{quo}.

If $x$ is nonzero nilpotent, then by Lemma \ref{root.2} and deforming as in \S\ref{deforming} we are reduced to the previous case.

Generally, $x$ has a Jordan decomposition $x = x_s + x_n$ where $x_s$ is semisimple and $x_n$ is nilpotent and we may assume $x_s \ne 0$.  If $x_s$ is noncentral, then we replace $x$ with $x_s$ (whose orbit is closed in the closure of $x^G$) and then replace $x_s$ with a root element as in Example \ref{deform.eg}.

Therefore, we may assume that $x_s, x_n \ne 0$ and $x_s$ is central.  Deforming, it suffices to treat the case where $x_n$ is a root element.  The line $tx_s + x_n$ for $t \in k$ has an open subset consisting of elements such that $e$ conjugates suffice to generate a subalgebra containing $[\g, \g]$, and this set is nonempty because it contains $x_n$, so it contains $t_0 x_s + x_n$ for some $t_0 \in \kx$.  The element $x_n$ and $t_0^{-1} x_n$ are in the same $G$-orbit, so the same is true of $x$ and $x_s + t_0^{-1} x_n$; this proves the claim.
\end{proof}

In the proof, the final paragraph could have been replaced by an argument that maps $x$ into the Lie algebra of the adjoint group of $G$ and applies the result for nilpotent elements there together with Lemma \ref{quo.gen}.

%%%%%%%%%%%%%%%%%%%%%%%%%%%%%%%%%%%%%%%%%%%%%%%%%%%%%%%%%%%%%%%%%%%%%
\section{The generic stabilizer in $G$ as a group scheme} \label{advert}

Let $G$ be an algebraic group over a field $k$ and $\rho \!: G \to \GL(V)$ a representation.  We say that $G$ acts \emph{generically freely} on $V$ if there is a dense open subset $U$ of $V$ such that, for every extension $K$ of $k$ and every $u \in U(K)$, the  stabilizer $G_u$ (a closed sub-group-scheme of $G \times K$) is the trivial group scheme 1.  Of course, $\ker \rho \subseteq G_u$ for all $u$, so it is natural to replace $G$ with $\rho(G)$ and assume that $G$ acts faithfully on $V$, i.e., $\ker \rho$ is the trivial group scheme.

In this section, we announce results on determining the generic stabilizer as a group scheme when $V$ is faithful and irreducible.  The proofs are combinations of the main results in this paper, the sequels \cite{GG:irred} and \cite{GG:special} (which build on this paper), and \cite{GurLawther}.

\begin{specthm} \label{MT.group}
Let $\rho \!: G \to \GL(V)$ be a faithful irreducible representation of a simple algebraic group over an algebraically closed field $k$.
\begin{enumerate}
\item \label{MT.group.et} $G_v$ is finite \'etale for generic $v \in V$ if and only if $\dim V > \dim G$ and $(G, V)$ does not appear in Table \ref{irred.nvfree}.
\item \label{MT.group.free} $G$ acts generically freely on $V$ if and only if $\dim V > \dim G$ and $(G, V)$ appears in neither Table \ref{irred.nvfree} nor Table \ref{ngenfree}.
\end{enumerate}
\end{specthm}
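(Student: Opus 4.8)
The plan is to decouple the group-scheme statement into its two constituent pieces — the group of $k$-points together with the component group on one side, and the infinitesimal part (the Lie algebra stabilizer) on the other — and to supply each piece from a different source. Throughout, faithfulness of $\rho$ means $\ker\rho$ is the trivial group scheme, so $\ker\drho = \Lie(\ker\rho) = 0$; consequently $\Lie(G_v) = \g_v$ for every $v$, and ``$\g_v = 0$ for generic $v$'' is exactly the assertion that $\g$ acts generically (equivalently virtually) freely on $V$. I would first record the two group-scheme facts used repeatedly: for a finite $G_v$, being \'etale is equivalent to $\Lie(G_v) = \g_v = 0$ (using the criterion from the Notation section that $\Lie(H) = 0$ forces a finite-type affine $H$ to be finite \'etale), and being the trivial group scheme is equivalent to being finite \'etale together with $G_v(k) = 1$.

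For part \eqref{MT.group.et}, I would invoke \cite{GurLawther}: for $V$ irreducible over an algebraically closed field, $G_v(k)$ is finite for generic $v$ if and only if $\dim V > \dim G$; and over an algebraically closed field $G_v(k)$ is finite precisely when $\dim G_v = 0$, i.e.\ when $G_v$ is finite as a group scheme. Hence $G_v$ is finite $\iff \dim V > \dim G$. Granting finiteness, $G_v$ is \'etale iff $\g_v = 0$ for generic $v$, i.e.\ iff $\g$ acts generically freely on $V$. The remaining content is therefore the classification of which faithful irreducible $V$ with $\dim V > \dim G$ fail to be generically free for $\g$; I would assemble this from Theorem \ref{MT} and Corollary \ref{irred} (which dispose of all but finitely many small restricted irreducibles once $\car k$ is not special), from \cite{GG:irred} (the remaining small restricted cases listed by L\"ubeck, together with the tensor-decomposable ones, for which Steinberg's tensor product theorem and the vanishing of $\drho$ on Frobenius twists reduce the $\g$-action to copies of the restricted factor), and from \cite{GG:special} (the special-characteristic cases). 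Table \ref{irred.nvfree} records exactly the exceptions produced by this combined analysis, giving $G_v$ finite \'etale $\iff \dim V > \dim G$ and $(G,V) \notin$ Table \ref{irred.nvfree}.

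For part \eqref{MT.group.free}, I would layer the triviality of $G_v(k)$ on top of part \eqref{MT.group.et}. By \cite{GurLawther}, when $G_v$ is finite the group $G_v(k)$ equals $1$ except in the cases of Table \ref{ngenfree}; so, given finiteness, $G_v(k) = 1 \iff (G,V) \notin$ Table \ref{ngenfree}. Combining this with the \'etale criterion: $G_v$ is the trivial group scheme iff it is finite \'etale and $G_v(k) = 1$, iff $\dim V > \dim G$ and $(G,V)$ lies in neither Table \ref{irred.nvfree} nor Table \ref{ngenfree}. A pair may a priori belong to both tables, and requiring exclusion from both is precisely what makes $G_v$ simultaneously \'etale and trivial on $k$-points.

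The main obstacle is not the group-scheme bookkeeping above, which is formal, but the construction of Table \ref{irred.nvfree}: deciding generic freeness of $\g$ for every faithful irreducible with $\dim V > \dim G$. Theorem \ref{MT} reduces this to finitely many representations of bounded dimension, via the $\Theta((\rank G)^2)$ bound and L\"ubeck's tables, but the residual restricted cases, the tensor-decomposable cases, and the special characteristics require the different and more computational methods of \cite{GG:irred} and \cite{GG:special}; it is there that the actual exceptional pairs are identified and the present announcement acquires its precise content.
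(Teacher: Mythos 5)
Your proposal is correct and follows essentially the same route as the paper: the paper likewise identifies ``$G_v$ finite \'etale for generic $v$'' with generic freeness of $\g$ (via $\Lie(G_v)=\g_v$ and the fact that $\Lie(H)=0$ forces a finite-type affine $H$ to be finite \'etale), cites Theorem A of \cite{GG:irred} for the Lie-algebra classification that you reassemble from Theorem \ref{MT}, Corollary \ref{irred}, \cite{GG:irred}, and \cite{GG:special}, and then layers the \cite{GurLawther} enumeration of $G_v(k)\ne 1$ on top for part \eqref{MT.group.free}. Your extra detour through \cite{GurLawther} to establish finiteness of $G_v$ in part \eqref{MT.group.et} is harmless but redundant, since $\g_v=0$ already yields finiteness of $G_v$ directly.
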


\begin{proof}
The stabilizer $G_v$ of a generic $v \in V$ is finite \'etale if and only if the stabilizer $\g_v$ of a generic $v \in V$ is zero, i.e., if and only if $\g$ acts generically freely on $V$.  By Theorem A in \cite{GG:irred}, this occurs if and only if $\dim V > \dim G$ and $(G, \car k, V)$ does not appear in Table \ref{irred.nvfree}, proving \eqref{MT.group.et}.

For \eqref{MT.group.free}, we must enumerate in Table \ref{ngenfree} those representations $V$ such that  $\dim V > \dim G$, $V$ does not appear in Table \ref{irred.nvfree}, and the group of points $G_v(k)$ is not trivial.  Those $V$ with the latter property are enumerated in \cite{GurLawther}, completing the proof.
\end{proof}

The results above settle completely the question of determining which faithful irreducible representations of simple $G$ are generically free.  It is natural to ask which of these hypotheses are necessary.  For example, if $\car k$ is special for $G$, there are irreducible but non-faithful representations that factor through the very special isogeny; whether or not these are virtually free for $\g$ is settled in \cite{GG:special}.  Another way that $G$ may fail to act faithfully is if $V$ is the Frobenius twist of a representation $V_0$; in that case $\g$ acts trivially on $V$, so $G$ acts virtually freely if and only if the group $G(k)$ of $k$-points acts virtually freely on $V_0$.  One could ask: \emph{What about analogues of the main results for $G$ semisimple?}  

One could also ask for a stronger bound in Theorem \ref{MT}.  What is the smallest constant $c$ such that the conclusion holds when we set $b(G) = c \dim G$?  What about to guarantee $G_v$ \'etale?  Or $G_v = 1$?  Table \ref{irred.nvfree} shows that $c$ must be greater than $1$.  Does $c = 2$ suffice?

\newpage
\begin{table}[bth]
{\centering\noindent\makebox[450pt]{
\begin{tabular}[c]{p{2.2in}|p{3in}}

${(A_\ell)~~}$
\begin{picture}(7,2)(0,0)
\put(0,1){\circle*{3}}
\put(0,1){\line(1,0){20}}
\put(20,1){\circle*{3}}
\put(20,1){\line(1,0){20}}
\put(40,1){\circle*{3}}
\put(40,-1.6){ \mbox{$\cdots$}}
\put(62,1){\circle*{3}}
\put(62,1){\line(1,0){20}}
\put(82,1){\circle*{3}}
\put(82,1){\line(1,0){20}}
\put(102,1){\circle*{3}}

\put(-2,-7){\mbox{\tiny $1$}}
\put(18,-7){\mbox{\tiny $2$}}
\put(38,-7){\mbox{\tiny $3$}}
\put(54,-7){\mbox{\tiny $\ell$$-$$2$}}
\put(75,-7){\mbox{\tiny $\ell$$-$$1$}}
\put(100,-7){\mbox{\tiny $\ell$}}
\end{picture}
\vspace{0.5cm}

&

${(C_\ell)~~}$
\begin{picture}(7,2)(0,0)
\put(0,1){\circle*{3}}
\put(0,0){\line(1,0){20}}
\put(0,2){\line(1,0){20}}
\put(20,1){\circle*{3}}
\put(20,1){\line(1,0){20}}
\put(40,1){\circle*{3}}
\put(40,-1.6){ \mbox{$\cdots$}}
\put(62,1){\circle*{3}}
\put(62,1){\line(1,0){20}}
\put(82,1){\circle*{3}}
\put(82,1){\line(1,0){20}}
\put(9,-1){{\small\mbox{$>$}}}
\put(102,1){\circle*{3}}

\put(-2,-7){\mbox{\tiny $1$}}
\put(18,-7){\mbox{\tiny $2$}}
\put(38,-7){\mbox{\tiny $3$}}
\put(54,-7){\mbox{\tiny $\ell$$-$$2$}}
\put(75,-7){\mbox{\tiny $\ell$$-$$1$}}
\put(100,-7){\mbox{\tiny $\ell$}}
\end{picture}
\vspace{0.5cm}
\\

${(B_\ell)~~}$
\begin{picture}(7,2)(0,0)
\put(0,1){\circle*{3}}
\put(0,0){\line(1,0){20}}
\put(0,2){\line(1,0){20}}
\put(20,1){\circle*{3}}
\put(20,1){\line(1,0){20}}
\put(40,1){\circle*{3}}
\put(40,-1.6){ \mbox{$\cdots$}}
\put(62,1){\circle*{3}}
\put(62,1){\line(1,0){20}}
\put(82,1){\circle*{3}}
\put(82,1){\line(1,0){20}}
\put(9,-1){{\small\mbox{$<$}}}
\put(102,1){\circle*{3}}

\put(-2,-7){\mbox{\tiny $1$}}
\put(18,-7){\mbox{\tiny $2$}}
\put(38,-7){\mbox{\tiny $3$}}
\put(54,-7){\mbox{\tiny $\ell$$-$$2$}}
\put(75,-7){\mbox{\tiny $\ell$$-$$1$}}
\put(100,-7){\mbox{\tiny $\ell$}}
\end{picture}
\vspace{0.5cm}

&

${(D_\ell)~~}$
\begin{picture}(7,2)(0,0)
\put(102,1){\circle*{3}}
\put(82,1){\line(1,0){20}}
\put(20,1){\circle*{3}}
\put(20,1){\line(1,0){20}}
\put(40,1){\circle*{3}}
\put(40,-1.6){ \mbox{$\cdots$}}
\put(62,1){\circle*{3}}
\put(62,1){\line(1,0){20}}
\put(82,1){\circle*{3}}
\put(20,2){\line(-4,3){15}}
\put(20,0){\line(-4,-3){15}}
\put(5,12.9){\circle*{3}}
\put(5,-10.9){\circle*{3}}

\put(-2,-12){\mbox{\tiny $2$}}
\put(-2,11.8){\mbox{\tiny $1$}}
\put(18,-7){\mbox{\tiny $3$}}
\put(38,-7){\mbox{\tiny $4$}}
\put(54,-7){\mbox{\tiny $\ell$$-$$2$}}
\put(75,-7){\mbox{\tiny $\ell$$-$$1$}}
\put(100,-7){\mbox{\tiny $\ell$}}
\end{picture}

\end{tabular}
}}
\caption{Dynkin diagrams of simple root systems of classical type, with simple roots numbered as in \cite{luebeck}.} \label{luebeck.table}
\end{table}

\begin{table}[htbp]
\begin{tabular}{cccrc||cccrc}
$G$&$\car k$&rep'n&$\dim V$&$\dim \g_v$&$G$&$\car k$&high weight&$\dim V$&$\dim \g_v$\\ \hline\hline
$\SL_8/\mu_4$&2&$\wedge^4$&70&3&$\Sp_8$&3&0100&40&2 \\
$\SL_9/\mu_3$&3&$\wedge^3$&84&2&$\Sp_4$&5&11&12&1\\
$\Spin_{16}/\mu_2$&2&half-spin&128&4&$\SL_4$&$p$ odd&$01p^e$, $e \ge 1$&24&1 \\
&&&&&$\SL_4/\mu_2$&2&$012^e$, $e \ge 2$&24&1
\end{tabular}
\caption{Irreducible and faithful representations $V$ with restricted highest weight of simple $G$ with $\dim V > \dim G$ that are not generically free for $\g$, up to graph automorphism.  For each, the stabilizer $\g_v$ of a generic $v \in V$ is a toral subalgebra.  The weights on the right side are numbered as in Table \ref{luebeck.table}.} \label{irred.nvfree}
\end{table}

\begin{table}[htbp]
\begin{tabular}{cccr||cccr}
$G$&$\car k$&$V$&$\dim V$&$G$&$\car k$&$V$&$\dim V$\\ \hline\hline
$A_1$&$\ne 2, 3$&$S^3$&4&$A_2$&$\ne 2, 3$&$S^3$&10 \\
$A_1$&$\ne 2, 3$&$S^4$&5&$A_3$&$\ne 2$&$L(2\omega_2)$&19 or 20 \\
$A_8$&$\ne 3$&$\wedge^3$&84&$A_7$&$\ne 2$&$\wedge^4$&70 \\
$A_3$&3&$L(\omega_1+\omega_2)$&16&$A_\ell$&$p \ne 0$&$L(\omega_1 + p^i \omega_\ell)$,&$(\ell+1)^2$ \\
&&&&&&$L(\omega_1 + p^i \omega_1)$ \\ 
$B_\ell$ ($\ell \ge 2$)&$\ne 2$&$L(2\omega_\ell)$& $2\ell^2 - 3\ell - \eps$&$C_4$&$\ne 2$&``spin''&41 or 42 \\
$D_\ell$ ($\ell \ge 4$)&$\ne 2$&$L(2\omega_\ell)$&$2\ell^2 + \ell -1 - \eps$ &$D_8$&$\ne 2$&half-spin&128
\end{tabular}
\caption{Irreducible faithful representations $V$ of simple $G$ with $\dim V > \dim G$ such that $G_v$ is finite \'etale and $\ne 1$ for generic $v \in V$, up to graph automorphism, adapted from \cite{GurLawther}.  The symbol $\eps$ denotes 0 or 1 depending on the value of $\car k$.} \label{ngenfree}
\end{table}

\clearpage

\bibliographystyle{amsalpha}
\providecommand{\bysame}{\leavevmode\hbox to3em{\hrulefill}\thinspace}
\providecommand{\MR}{\relax\ifhmode\unskip\space\fi MR }
% \MRhref is called by the amsart/book/proc definition of \MR.
\providecommand{\MRhref}[2]{%
  \href{http://www.ams.org/mathscinet-getitem?mr=#1}{#2}
}
\providecommand{\href}[2]{#2}

\end{document}